\def\co{\colon\thinspace}
\DeclareMathOperator{\id}{id}
\DeclareMathOperator{\dd}{d}
\def\op{\mathrm}
\def\lalgd{\mathsf{LieAlgd}}
\def\lalgdbf{\mathsf{LieAlgd}_{\rm bf}}
\def\dglalgd{\mathsf{dgLieAlgd}}
\def\dglalgdbf{\mathsf{dgLieAlgd}_{\rm bf}}
\def\linfcat{\L8$-$\mathsf{space}}
\def\linfcatbf{\L8$-$\mathsf{space}_{\rm bf}}
\def\dstk{\mathsf{dSt}}
\def\fib{\op{fib}}
\def\shom{{\cH}om}
\def\VB{\mathop{\rm VB}}
\def\Mod{\mathop{\rm Mod}}
\def\colim{\mathop{\rm colim}}
\def\repinf{\RR {\rm ep}^\infty}
\def\calgd{\mathsf{C}^*}
\def\xto{\xrightarrow}
\def\calsym{\widehat{{\cS} {\rm ym}}}
\title{Lie algebroids as $\L8$ spaces}
\author{Ryan Grady}
\address{Department of Mathematical Sciences\\Montana State University\\Bozeman, MT 59717}
\email{ryan.grady1@montana.edu}
\thanks{The first author was partially supported by the National Science Foundation under Award DMS-1309118.}
\author{Owen Gwilliam}
\address{Max Planck Institut f\"ur Mathematik\\Vivatsgasse 7\\53111 Bonn\\Germany}
\email{gwilliam@mpim-bonn.mpg.de}
\thanks{The second author was partially supported as a postdoctoral fellow by the National Science Foundation under Award DMS-1204826.}
\subjclass[2010]{58H15, 53D17, 14D23}
\keywords{dg Lie algebroid; derived geometry; shifted symplectic structure}
\begin{document}

\begin{abstract}
In this paper, we relate Lie algebroids to Costello's version of derived geometry. For instance, we show that each Lie algebroid---and the natural generalization to dg Lie algebroids---provides an (essentially unique) $L_\infty$ space. More precisely, we construct a faithful functor from the category of Lie algebroids to the category of $L_\infty$ spaces. Then we show that for each Lie algebroid $L$, there is a fully faithful functor from the category of representations up to homotopy of $L$ to the category of vector bundles over the associated $L_\infty$ space. Indeed, this functor sends the adjoint complex of $L$ to the tangent bundle of the $L_\infty$ space. Finally, we show that a shifted-symplectic structure on a dg Lie algebroid  produces a shifted symplectic structure on the associated $L_\infty$ space.
\end{abstract}

\maketitle

\tableofcontents

\section{Introduction}

Lie algebroids appear throughout geometry and physics, and they provide a fertile transfer of ideas and intuition between geometry and Lie theory. (The literature is vast. See, for instance, \cite{Mackenzie,dSW,ELW,Fernandes,CRVDB,CF}, among many other papers.)  Recently the language of derived geometry has provided another perspective on the relationship between Lie theory and geometry, emphasizing the idea that a deformation problem is describable by a dg Lie algebra (e.g., \cite{LurieDAGX,Hinich}). (The formal story has a rich history built on ideas of Schlessinger, Stasheff, Quillen, Illusie, Deligne, Drinfeld, Kontsevich, and others, see \cite{Manetti} and the references contained therein.) These two approaches are compatible (see, e.g., \cite{Hennion,CCT,GR}), and by combining them, it becomes clearer both how to systematically provide Lie algebroid versions of constructions from Lie theory and also how to interpret such constructions in derived geometric terms. 

In this paper we will use instead an approach to derived geometry initiated by Costello \cite{CosWG,GGLoop} for two related reasons. First, Costello's notion of $\L8$ space interpolates smoothly between the functorial approach to derived geometry and the language of dg manifolds, or $Q$-manifolds, common in mathematical physics.  Hence it is convenient for drawing from the rich literature in higher differential geometry, Second, as we describe in subsection \ref{ss:apps}, Costello's formalism is compatible with his machinery for renormalization \cite{Cos1} and hence makes it possible to rigorously develop interesting perturbative quantum field theories.

\subsection{What we prove}

Our main results in this paper all amount to showing that a well-established notion in Lie algebroids maps to a parallel notion in Costello's version of derived geometry. For instance, we show that each Lie algebroid $L$---and the natural generalization to dg Lie algebroids---provides an (essentially unique) $\L8$ space $\op{enh}(L)$. More precisely, we construct a faithful functor from the category of Lie algebroids to the category of $\L8$ spaces. Then we show that for each Lie algebroid $L$, there is a fully faithful functor from the category of representations up to homotopy of $L$ to the category of vector bundles over $\op{enh}(L)$. Indeed, this functor sends the adjoint complex of $L$ to the tangent bundle of $\op{enh}(L)$. Finally, we show that a shifted-symplectic structure on a dg Lie algebroid $L$ produces a shifted-symplectic structure on~$\op{enh}(L)$.

\begin{remark}
These results are not tautological. The definitions of vector bundle and symplectic structure for $\L8$ spaces were written before we knew about representations up to homotopy or symplectic Lie $n$-algebroids. Indeed, our results show a fortuitous alignment between these two approaches to higher structures in differential geometry;  the simplicity of the relationship between Lie algebroids and $\L8$ spaces surprised us. We hope this pattern continues. (We wonder, in particular, about $\L8$ space analogs of, e.g., the work of Calaque, C{\u{a}}ld{\u{a}}raru, and Tu \cite{CCT} on Lie algebroids for derived intersections.)
\end{remark}

\subsection{Applications to physical mathematics}
\label{ss:apps}

Costello introduced $\L8$ spaces and his framework for derived geometry to facilitate the expression of classical field theories, particularly nonlinear $\sigma$-models, in a manner amenable to quantization via Feynman diagrams and renormalization. These notions appeared in his quantization of the curved $\beta\gamma$ system \cite{CosWG}. Since this work, his methods have been applied to several more examples:
\begin{enumerate}
\item[(i)] one-dimensional topological $\sigma$-model into a cotangent bundle \cite{GGCS};
\item[(ii)] one-dimensional topological $\sigma$-model into a symplectic manifold, recovering Fedosov quantization from the BV formalism \cite{GLL};
\item[(iii)] the topological B-model and the Landau-Ginzburg model \cite{LiLi};
\item[(iv)] the two-dimensional nonlinear $\sigma$-model \cite{GW, Nguyen}.
\end{enumerate}
The results in those papers, though, extend to a much larger class of target spaces: typically any $\L8$ space satisfying some analog of the geometric structure required when the target is an ordinary manifold (e.g., a symplectic form). Written in this style, these nonlinear $\sigma$-models are simply versions of Chern-Simons, holomorphic Chern-Simons, or BF theories with a sophisticated version of a Lie algebra as the ``gauge group'' (rather, the gauge algebra).

Our work thus allows us to formulate nonlinear $\sigma$-models with Lie algebroids as the target spaces. Another intriguing direction is to formulate the Lie algebroid Yang-Mills theories of Strobl and collaborators \cite{Strobl, KStrobl, MStrobl} in terms of $\L8$ spaces, in order to consider their perturbative quantizations.

\subsection{Notations and conventions}

We work throughout in characteristic zero.
We work cohomologically, so the differential in any complex increases degree by one.

For $A$ a cochain complex, $A^\sharp$ denotes the underlying graded vector space. If $A$ is a cochain complex whose degree $k$ space is $A^k$, then $A[1]$ is the cochain complex where $A[1]^k = A^{k+1}$. We use $A^\vee$ to denote the graded dual.

For $X$ a smooth manifold, we use $T_X$ to denote its tangent bundle as a vector bundle and $TX$ to denote the total space of that vector bundle. We use $T^\vee_X$ to denote the cotangent bundle.

If $f \co X \to Y$ is a map of smooth manifolds and $V$ a vector bundle, then we use $f^{-1} V$ to denote the pullback vector bundle. Similarly, for $\cF$ a sheaf on $Y$, we use $f^{-1}\cF$ to denote the pullback sheaf, simply as a sheaf of sets or vector spaces. We reserve the notation $f^\ast \cV$ for the case where $\cV$ is a sheaf of dg $\Omega^\ast_Y$-modules, and $f^\ast \cV$ denotes the sheaf of dg $\Omega^\ast_X$-modules obtained from $f^{-1} \cV$ by extending scalars.

\subsection{Acknowledgements}

We have many people to thank for their encouragement and help in pursuing these questions. Chenchang Zhu emphasized that such results as we prove here would be worthwhile.  Ping Xu and Mathieu Sti\'enon have discussed with us various approaches to higher differential geometry with persistence and enthusiasm. Damien Calaque has had a strong influence on us, particularly his artful interweaving of Lie algebroids, jets, and derived geometry in his work with Van den 
Bergh and others. Both the work of Arias Abad-Crainic on representations up to homotopy \cite{AbadCrainic} and the work of Crainic-Moerdijk on the deformation complex \cite{CrainicMoerdijk} present a clean and powerful approach to Lie algebroids which made our work possible (and much more conceptual). We owe a great debt to Kevin Costello for his inspiring discussions and regular feedback. Finally, we thank the anonymous referee who identified a gap in an important argument and made several quite helpful suggestions that improved the paper.

The authors gratefully acknowledge support from the Hausdorff Research Institute for Mathematics, especially the trimester program ``Homotopy theory, manifolds, and field theories'' at which some of the research for this paper was performed. Similarly, we would like to thank both the Max Planck Institut f\"ur Mathematik and the Perimeter Institute for Theoretical Physics for their hospitality and working conditions during the writing of parts of the current paper. Research at Perimeter Institute is supported by the Government of Canada through Industry Canada and by the Province of Ontario through the Ministry of Economic Development and Innovation.

\section{Recollections on $\L8$ spaces}

We will give a brief overview of the definitions and constructions with $\L8$ spaces that are relevant to our work here. For more detail and exposition, see~\cite{GGLoop}.

\subsection{Curved $\L8$ algebras}

We begin by describing the relevant definitions from algebra.
Recall that for a graded commutative algebra $A^\sharp$ and a graded $A^\sharp$-module $V$, the graded $A^\sharp$-module 
\[
\Sym_{A^\sharp}(V)= \bigoplus_{n \geq 0} \Sym^n_{A^\sharp}(V)
\]
admits a natural cocommutative coalgebra structure in which
\[
\Delta(v_1 \cdots v_n) = \sum \pm v_{\sigma(1)} \cdots v_{\sigma(p)} \otimes v_{\sigma(p+1)} \cdots v_{\sigma(n)}
\]
where the sum is over all $(p,q)$-shuffles $\sigma$ with $p+q = n$ and the sign is via the Koszul rule.

Recall that a map of coalgebras is determined by its image into the cogenerators.  In particular, if $\phi \co \Sym (V) \to \Sym (W)$ is a map of coalgebras, then $\phi$ is determined by a collection of maps 
\[
\{\phi_n \co \Sym^n (V) \to W \oplus A^\sharp,  \text{ for } n \ge 0\},
\]
where each component $\phi_n$ denotes the restriction of $\phi$ to the summand $\Sym^n (V)$ followed by projection onto the cogenerator $W \oplus A^\sharp = \Sym^1 W \oplus \Sym^0 W$. (If the map $\phi$ respects the usual coaugmentations by $A^\sharp$, then $W$ is a cogenerator and it thus suffices to consider the projection onto just~$W$.)

\begin{definition}\label{curvedl8}
Let $A$ be a commutative dg algebra with a nilpotent dg ideal $I$ (i.e., $I^n = 0$ for some $n$). 
Let $A^\sharp$ denote the underlying graded commutative algebra. A {\it curved $\L8$ algebra $\fg$ over $A$} consists~of
\begin{enumerate}
\item[(1)] a locally free, $\ZZ$-graded $A^\sharp$-module $V_\fg$ and
\item[(2)] a linear map of cohomological degree 1
\[
d\co \Sym_{A^\sharp} (V_\fg[1]) \to  \Sym_{A^\sharp} (V_\fg[1])
\]
\end{enumerate}
such that
\begin{enumerate}
\item[(i)] $(\Sym_{A^\sharp} (V_\fg[1]),d)$ is a cocommutative dg coalgebra over $A$ with the standard coproduct, and
\item[(ii)] $d(\Sym^0(V_\fg[1])) \subset I \cdot V_\fg[1]$.
\end{enumerate}
We use $C_\ast(\fg)$ to denote this cocommutative dg coalgebra and call it the {\it Chevalley-Eilenberg homology complex} of this curved $\L8$ algebra~$\fg$.
\end{definition}

These conditions amount to requiring that $d$ be a square-zero coderivation and that modulo $I$, the coderivation $d$ vanishes on the constants. In short, base-changing along the algebra map $A \to A/I$, we obtain an un-curved $\L8$ algebra.

As usual in the $\L8$ setting, we use the notation and terminology ``Chevalley-Eilenberg'' since these constructions extend the usual notions of Lie algebra homology. (Note, however, that we work cohomologically, so that our differential still increases degree. Thus, for us, the homology complex of any ordinary Lie algebra $\fg$ is concentrated in {\it nonpositive} degrees.)

There is also a natural Chevalley-Eilenberg {\it cohomology} complex $\widehat{C}^*(\fg)$, defined as follows.
For $V$ a graded $A^\sharp$-module, its {\em completed symmetric algebra} is the graded $A^\sharp$-module
\[
\csym_{A^\sharp} (V) = \prod_{n \geq 0} \Sym^n_{A^\sharp}(V)
\]
equipped with the filtration $F^k \csym_{A^\sharp} (V)  = \Sym^{\geq k}_{A^\sharp}(V)$ and the usual commutative product, which is filtration-preserving. 
Then $\widehat{C}^*(\fg)$ is $(\csym_{A^\sharp}(V_\fg^\vee[-1]), d_\fg)$ with $d_\fg$ the differential dual to $d$ on $C_*(V)$. In particular, $d$ is a derivation. 

Note that powers of the nilpotent ideal $I$ provides another natural filtration on $C_*(V)$ and on $\widehat{C}^*(V)$: for example, $F^k_I C_*(V) = I^k \cdot C_*(V)$. We write $\Gr C_*(V)$ for the associated graded cocommutative dg coalgebra of this filtration.

Similarly, powers of the nilpotent ideal $I$ equips $\fg$ and the underlying vector space $V_\fg$ with filtrations.  The associated graded $\Gr \fg$ is a $\L8$ algebra over $(\Gr A, 0)$, where $\Gr A$ is the associated graded to the $I$-filtration. Note that $\Gr V_\fg [1]$ has no curving, and, in particular, $\Gr V_\fg [1]$ is thus a cochain complex.

\begin{definition}
A {\it map of curved $\L8$ algebras} $\phi \co \fg \to \fh$ is a map of cocommutative dg coalgebras $\phi \co C_*(\fg) \to C_*(\fh)$ respecting the $I$-filtration. A map is a {\it weak equivalence} if the map $\Gr ( \phi_1 ) \co \Gr V_\fg [1] \to \Gr V_\fh[1]$ on the associated graded cochain complexes is a quasi-isomorphism.
\end{definition}

Recall our convention, stated just before Definition \ref{curvedl8}, for the ``components'' of a coalgebra map given by projection onto a cogenerator. For a non-curved $\L8$ algebra, such as $\Gr \fg$ or $\Gr \fh$, the Chevalley-Eilenberg homology complex is naturally coaugmented and hence $\Gr V_\fh[1]$ is a cogenerator.

\begin{remark}
\mbox{}
\begin{enumerate}
\item Note that this notion of weak equivalence is stronger than requiring a quasi-isomorphism between Chevalley-Eilenberg homology complexes, or even a filtered quasi-isomorphism between homology complexes (i.e., a quasi-isomorphism on the associated gradeds). Indeed, note that we have an isomorphism $\Gr C_\ast \fg \cong C_\ast \Gr \fg$, so that a weak equivalence induces a filtered quasi-isomorphism.
\item In \cite{Hinich}, Hinich equips the category of conilpotent cocommutative coalgebras over a field of characteristic zero with a (non-obvious) model structure such that the Chevalley-Eilenberg complex $C_\ast (-)$ is a right Quillen functor from a model category of differential graded Lie algebras and this Quillen adjunction is a Quillen equivalence. The definition of weak equivalence of curved $\L8$ algebras is a natural extension of Hinich's notion, as it agrees with his on the associated graded non-curved $\L8$ algebras.
\end{enumerate}
\end{remark}

\subsection{$\L8$ spaces}\label{sect:L8spaces}

We now describe a version of ``families of curved $\L8$ algebras parametrized by a smooth manifold."

\begin{definition}
Let $X$ be a smooth manifold. An {\it $\L8$ space} is a pair $(X, \fg)$, where $\fg$ is the sheaf of smooth sections of a finite-rank, $\ZZ$-graded vector bundle $\pi\co V_\fg \to X$ equipped with the structure of a curved $\L8$ algebra structure over the commutative dg algebra $\Omega^\ast_X$ with nilpotent ideal~$\sI=\Omega^{\geq 1}_X$.
\end{definition}

\begin{remark}
In \cite{GGLoop}, we allowed the vector bundle $V_\fg$ to be a topological vector bundle in order to include a class of examples related to nonlinear $\sigma$-models. The fibers were Fr\'echet vector spaces. (It might be better to tame such infinite-rank vector bundles by viewing them as bornological or as sheaves on a site of manifolds.) As we only need finite-rank vector bundles here, we will restrict to that case.
\end{remark}

As we explain in the next subsection, the best way to think of an $\L8$ space is via its functor of points. In other words, an $\L8$ space presents a more intrinsic geometric object, much as one can present a smooth manifold by an atlas or a nice topological space by a cell complex. But it is clear, even just from the definition, that there are many examples. For example, every $\L8$ algebra provides an $\L8$ space over a point. Less obvious examples arise from smooth and complex manifolds, as detailed in \cite{CosWG} and \cite{GGLoop}. Our main result here shows how to construct an $\L8$ space from a Lie algebroid.

Let $(X,\fg)$ be an $\L8$ space. Observe that given a smooth map $f\co Y \to X$, we obtain a curved $\L8$ algebra over $\Omega^*_Y$~by
\[
f^* \fg := f^{-1}\fg \otimes_{f^{-1} \Omega^*_X} \Omega^*_Y,
\]
where $f^{-1} \fg$ denotes the sheaf of smooth sections of the pullback vector bundle~$f^{-1}~V_\fg$.

\begin{definition}
Let $(X, \fg)$ and $(Y, \fh)$ be $\L8$ spaces. A {\it map of $\L8$ spaces} $\Psi \co (X, \fg) \to (Y, \fh)$ is a pair $(f,\psi)$, where $f \co X \to Y$ is a smooth map and $\psi \co \fg \to f^\ast \fh$ is a map of curved $\L8$ algebras over $\Omega^\ast_X$. We say a map is {\it base-fixing} if $f$ is the identity.
\end{definition}

There are thus two categories of interest to us. Let $\linfcat$ denote the category of all $\L8$-spaces and all maps of $\L8$-spaces. For each manifold $X$, there is also the category $\linfcatbf(X)$ whose objects are $\L8$-spaces with underlying manifold $X$ and whose morphisms are the base-fixing maps thereof. (These are simply 1-categories, not $(\infty,1)$-categories.) 

In parallel to these 1-categories, there are two natural $(\infty,1)$-categories of $\L8$-spaces. We will present them as categories with weak equivalences, but first we will explain how $\L8$ spaces define derived stacks, as that functorial context determines the correct notion of weak equivalence.

\subsection{A functorial view on derived geometry}\label{sect:dstack}

An $\L8$ space has an associated ``functor of points'' and hence can be understood as presenting a kind of space in the same way that a commutative algebra presents a scheme. In the  formalism developed in \cite{GGLoop}, which we now briefly discuss, we make this assertion precise as follows. 

There is a site $\dgMan$ (in fact, an $\infty$-site) of {\it nil dg manifolds}, in which an object $\cM$ is a smooth manifold $M$ equipped with a sheaf $\sO_\cM$ of commutative dg algebras over $\Omega^*_M$ that has a nilpotent dg ideal $\sI_\cM$ such that $\sO/\sI \cong \cinf_M$. For the full definition, including the definition of cover, see \cite{GGLoop}. The details are not relevant for the constructions in this paper.

\begin{definition}\label{def:derivedstack}
A {\it derived stack} is a functor $\bX\co \dgMan^{op} \to s\!\Sets$ satisfying
\begin{enumerate}
\item[(1)] $\bX$ sends weak equivalences of nil dg manifolds to weak equivalences of simplicial sets;
\item[(2)] $\bX$ satisfies \v{C}ech descent, i.e., if for every nil dg manifold $\cM$ and every cover $\fV$ of $\cM$, we have
\[
\cF(\cM) \xrightarrow{\simeq} \holim_{\check{C}\fV}\,\cF,
\]
where $\check{C}\fV_\bullet$ denotes the \v{C}ech nerve of the cover (namely the simplicial diagram with $n$-simplices $\check{C}\fV_n := \fV \times_\cM \cdots \times_\cM \fV$).
\end{enumerate} 
\end{definition}

We now explain how every $\L8$ space defines such a functor.  

\begin{definition}
For $(X,\fg)$ an $\L8$ space, its {\it functor of points} $\bB\fg\co \dgMan^{op} \to s\!\Sets$
sends the nil dg manifold $\cM$ to the simplicial set $\bB\fg(\cM)$ in which an $n$-simplex is a pair $(f,\alpha)$: a smooth map $f\co M \to X$ and a solution $\alpha$ to the Maurer-Cartan equation in sections over $M$ of the $\L8$ algebra~$f^*\fg\ot_{\Omega^*_M}\sI_\cM\ot_\RR \Omega^*(\triangle^n)$.
\end{definition}

\begin{remark}
If we view $\widehat{C}^* \fg$ as the structure sheaf of the $\L8$ space, then a 0-simplex of $\bB\fg(\cM)$ is a map of the underlying manifolds $f\co M \to X$ and a map of commutative dg algebras $f^\ast \widehat{C}^* \fg \to \sO_\cM$. In other words, it is a map of dg ringed spaces. In practice, it is fruitful to think of an $\L8$ space as a dg manifold, described Koszul-dually as an $\L8$ algebra. But, as usual, the tricky aspect is to keep track of weak equivalences between dg manifolds, which is why the functorial approach is so helpful: one focuses on the output of a construction, not its inner workings.
\end{remark}

A central result of \cite{GGLoop} is then the following.

\begin{theorem}[Theorem 4.8 \cite{GGLoop}]\label{thm:L8IsDerived}
The functor $\bB \fg$ associated to an $\L8$ space $(X,\fg)$ is a derived stack.
\end{theorem}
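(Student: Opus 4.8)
\emph{Proof proposal.} The plan is to reduce the two defining conditions of a derived stack---homotopy invariance of $\bB\fg$, and \v{C}ech descent---to standard facts about the Maurer--Cartan functor on nilpotent $\L8$ algebras, the point being that $\bB\fg(\cM)$ is assembled from such functors by varying over smooth maps $M\to X$.

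First I would record the structural observation that the face and degeneracy maps of $\bB\fg(\cM)$ act only on the Maurer--Cartan datum and fix the smooth map $f\co M\to X$, so that $\bB\fg(\cM)$ is the disjoint union
\[
\bB\fg(\cM)\;\cong\;\coprod_{f\in\cinf(M,X)}\mathrm{MC}_\bullet\big(\fg^\cM_f\big),\qquad \fg^\cM_f:=\Gamma\big(M,\;f^*\fg\ot_{\Omega^*_M}\sI_\cM\big),
\]
where for a nilpotent $\L8$ algebra $\fh$ we write $\mathrm{MC}_\bullet(\fh):=\mathrm{MC}\big(\fh\ot_\RR\Omega^*(\triangle^\bullet)\big)$ for the Getzler--Hinich simplicial set of Maurer--Cartan elements. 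Because $\sI_\cM$ is a nilpotent dg ideal and the curving of $f^*\fg$ already lies in $\Omega^{\geq 1}_M\cdot f^*\fg$ with $\Omega^{\geq 1}_M\subseteq\sI_\cM$, each $\fg^\cM_f$ is a nilpotent curved $\L8$ algebra, so its Maurer--Cartan equation is a finite sum and $\mathrm{MC}_\bullet(\fg^\cM_f)$ is a Kan complex. Since $\cinf(-,X)$ is a sheaf of sets, the indexing set is inert under both weak equivalences and covers, and the theorem reduces to the corresponding statements for the functors $\cM\mapsto\mathrm{MC}_\bullet(\fg^\cM_f)$.

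For homotopy invariance I would use that a weak equivalence of nil dg manifolds induces a diffeomorphism on underlying manifolds and a quasi-isomorphism of structure sheaves, hence of their nilpotent ideals; tensoring such a quasi-isomorphism of complexes of $\Omega^*_M$-modules with the locally free finite-rank module $f^*\fg$ is exact, and the global-sections functor over $M$ preserves quasi-isomorphisms of complexes of soft sheaves, so the induced map $\fg^\cN_f\to\fg^\cM_f$ is a quasi-isomorphism of nilpotent $\L8$ algebras. The homotopy invariance of $\mathrm{MC}_\bullet$ under filtered quasi-isomorphisms of nilpotent $\L8$ algebras---the theorem of Goldman--Millson, Hinich \cite{Hinich}, and Getzler---then gives condition (1). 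For \v{C}ech descent, observe that a cover $\fV\to\cM$ restricts to an open cover of $M$, along whose \v{C}ech nerve the sheaf $\cinf(-,X)$ satisfies strict descent; fixing a compatible system $f$ and writing $\sF:=f^*\fg\ot_{\Omega^*_M}\sI_\cM$, the complex $\sF$ consists of modules over the fine sheaf of rings $\cinf_M$, hence of soft sheaves, so $\Gamma(M,\sF)\xrightarrow{\;\simeq\;}\holim_{\check{C}\fV}\Gamma(-,\sF)$. It then remains to show that $\mathrm{MC}_\bullet$ carries this homotopy limit of nilpotent $\L8$ algebras to a homotopy limit of Kan complexes; I would prove this by inducting on the order of nilpotency of $\sI_\cM$, using that the successive layers $F^k/F^{k+1}$ of the $\sI_\cM$-adic filtration are square-zero---so that on them $\mathrm{MC}_\bullet$ is computed by Dold--Kan and manifestly commutes with the homotopy limit---and propagating the statement along the principal fibrations $\mathrm{MC}_\bullet(\fg^\cM_f/F^{k+1})\to\mathrm{MC}_\bullet(\fg^\cM_f/F^{k})$.

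The hard part will be exactly this last step: the Maurer--Cartan functor is nonlinear, so controlling its interaction with homotopy limits---equivalently, reducing the nonabelian \v{C}ech gluing to abelian sheaf cohomology through the nilpotency filtration---is the heart of the matter, whereas the homotopy-invariance argument and the soft-sheaf descent input are comparatively formal. A secondary technical point I would check is that the cosimplicial diagram $n\mapsto\bB\fg(\check{C}\fV_n)$ of Kan complexes is Reedy fibrant (or replace it by such), so that the strict totalization indeed computes $\holim_{\check{C}\fV}\bB\fg$.
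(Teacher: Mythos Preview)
The paper does not actually prove this theorem: it is quoted verbatim as Theorem~4.8 of \cite{GGLoop} and no argument is given here. So there is nothing in the present paper to compare your proposal against directly.

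That said, your outline is consistent with the methods of \cite{GGLoop} that \emph{are} visible in this paper. The proof of the Proposition immediately following the theorem (that $\bB$ preserves and detects weak equivalences) invokes exactly the artinian-induction machinery you describe: one works up the tower $\cO_\cM/\cI_\cM^d$, uses that $\bB\fg(q_d)$ is a Kan fibration (Lemma~C.3 of \cite{GGLoop}), and identifies the fibers with Dold--Kan simplicial sets of the abelian dg Lie algebras $\eta^*\fg\otimes_{\Omega^*_M}\cI_\cM^d/\cI_\cM^{d+1}$ (the discussion before Lemma~C.8 of \cite{GGLoop}). Your decomposition of $\bB\fg(\cM)$ as a coproduct over $\cinf(M,X)$ of Maurer--Cartan simplicial sets, and your plan to establish \v{C}ech descent by filtering by powers of $\sI_\cM$ and reducing to the abelian (Dold--Kan) case on each graded piece, is precisely this strategy. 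Your identification of the ``hard part''---that $\mathrm{MC}_\bullet$ is nonlinear, so its commutation with the \v{C}ech homotopy limit must be bootstrapped through the nilpotency filtration---is on target, and the principal-fibration structure you invoke is what Lemma~C.3 of \cite{GGLoop} supplies.

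One caution: you assert that a weak equivalence of nil dg manifolds induces a diffeomorphism of underlying manifolds. That is not stated in the present paper and depends on the precise definition in \cite{GGLoop}; you should verify it rather than assume it. Likewise, your remark that $\Omega^{\geq 1}_M\subseteq\sI_\cM$ is correct (the composite $\Omega^{\geq 1}_M\to\sO_\cM\to\cinf_M$ vanishes), but it is worth saying explicitly why, since it is what makes $\fg^\cM_f$ genuinely nilpotent rather than merely curved.
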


This result thus gives a perspective on what an $\L8$ space means: it is a computationally-convenient presentation of a derived stack $\bB\fg$, which is a more invariant or intrinsic notion.

Derived stacks form a category $\dstk$ with weak equivalences, where a natural transformation $F\co \bX \to \bY$ between derived stacks is a weak equivalence if $F(\cM)\co \bX(\cM) \to \bY(\cM)$ is a weak homotopy equivalence for every nil dg manifold $\cM$. Because we only care about $\L8$ spaces in terms of their derived stacks, we want a notion of weak equivalence on $\L8$ spaces that matches with that on derived stacks. 

\begin{definition}
A map of $\L8$ spaces $\Psi \co (X,\fg) \to (Y,\fh)$ is {\it weak equivalence} if the map of underlying manifolds $f$ is a diffeomorphism and the map of curved $\L8$ algebras $\psi \co \fg \to f^\ast \fh$ is a weak equivalence.
\end{definition}

Our definition of weak equivalence between $\L8$ spaces is motivated by the following property.

\begin{prop}
The functor $\bB\co \linfcat \to \dstk$ sending $(X,\fg)$ to $\bB\fg$ is a functor between categories with weak equivalences, i.e., it preserves weak equivalences.  Moreover, this functor detects weak equivalences. 
\end{prop}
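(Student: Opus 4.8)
The plan is to prove the two assertions separately, in both cases using the explicit description of $\bB\fg$ on a fixed nil dg manifold $\cM = (M, \sO_\cM, \sI_\cM)$. Unwinding the definitions, and noting that the simplicial structure involves only the tensor factor $\Omega^\ast(\triangle^\bullet)$, there is a natural isomorphism of simplicial sets
\[
\bB\fg(\cM)\;\cong\;\coprod_{g\in C^\infty(M,X)}\mathrm{MC}_\bullet(L_g),
\]
where $L_g$ is the global sections of the sheaf $g^\ast\fg\ot_{\Omega^\ast_M}\sI_\cM$ and $\mathrm{MC}_\bullet(L_g)_n := \mathrm{MC}\bigl(L_g\ot_\RR\Omega^\ast(\triangle^n)\bigr)$. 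Since $\sI_\cM$ is nilpotent and contains the image of $\Omega^{\ge 1}_M$, the $\sI_\cM$-adic filtration makes each $L_g$ a nilpotent curved $\L8$ algebra, with curving in a positive filtration stage, to which Maurer--Cartan theory applies; and for a map $\Psi = (\phi,\psi)$ of $\L8$ spaces, $\bB\Psi(\cM)$ sends the $g$-summand to the $(\phi\circ g)$-summand by $\mathrm{MC}_\bullet(g^\ast\psi\ot\id_{\sI_\cM})$. It is convenient to also record the reformulation: unwinding $\Gr$ over $\Omega^\ast_X$ (whose associated graded is $\Omega^\sharp_X$ with zero differential, faithfully flat over $C^\infty_X$), a map $\psi\co\fg\to\fh$ of curved $\L8$ algebras over $\Omega^\ast_X$ is a weak equivalence if and only if the chain map $\overline\psi_1\co(\overline V_\fg,\overline d)\to(\overline V_\fh,\overline d)$ on the underlying bounded complexes of finite-rank vector bundles of $\fg\bmod\sI$ and $\fh\bmod\sI$ is a quasi-isomorphism, equivalently a fibrewise quasi-isomorphism (a fibrewise-acyclic bounded complex of vector bundles has differentials of locally constant rank, so its cohomology sheaves are vector bundles with vanishing fibres).

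\emph{Preservation.} Suppose $\Psi$ is a weak equivalence, so $\phi$ is a diffeomorphism (whence $g\mapsto\phi\circ g$ is a bijection of the indexing sets above) and $\psi$ is a weak equivalence; replacing $(X,\fg)$ by the isomorphic $\L8$ space $(Y,\phi_\ast\fg)$ we may assume $\phi=\id$. It suffices to show that each $g^\ast\psi\ot\id_{\sI_\cM}\co L_g\to L'_g$ induces a weak homotopy equivalence on $\mathrm{MC}_\bullet$. First, $g^\ast\psi$ is again a weak equivalence of curved $\L8$ algebras: its reformulation $g^\ast\overline\psi_1$ is a pullback of a fibrewise quasi-isomorphism of bounded complexes of vector bundles, hence fibrewise a quasi-isomorphism, hence a quasi-isomorphism. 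Second, $-\ot_{\Omega^\ast_M}\sI_\cM$ sends this to a filtered map $L_g\to L'_g$ that is a quasi-isomorphism on associated gradeds: on $\Gr$ the de~Rham, connection, and curving terms of the differential all drop into higher filtration, leaving $g^\ast\overline\psi_1$ tensored with the (locally free, globally acyclic) graded pieces of $\sI_\cM$. One then invokes the filtered analogue of the Goldman--Millson theorem---which holds as well for curved $\L8$ algebras whose curving lies in a positive filtration stage---to conclude that such a filtered quasi-isomorphism of nilpotent $\L8$ algebras induces a weak homotopy equivalence (indeed a trivial Kan fibration) on Maurer--Cartan simplicial sets. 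Summing over the summands, $\bB\Psi(\cM)$ is a weak homotopy equivalence for every $\cM$, so $\bB\Psi$ is a weak equivalence of derived stacks. This homotopy-invariance statement for the Maurer--Cartan functor in the present filtered, curved setting is the one substantive point; the rest is bookkeeping with the de~Rham and $\sI_\cM$-adic filtrations.

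\emph{Detection.} Conversely, suppose $\bB\Psi$ is a weak equivalence. Evaluating on an ordinary manifold $\cM=(M,C^\infty_M)$, where $\sI_\cM=0$ and the disjoint union collapses to the constant simplicial set on $C^\infty(M,X)$, the map $\bB\Psi(M)$ is post-composition with $\phi$; as this is a bijection naturally in $M$ and the Yoneda embedding of manifolds is fully faithful, $\phi\co X\to Y$ is a diffeomorphism. Replacing $(X,\fg)$ by $(Y,\phi_\ast\fg)$, we reduce to $\phi=\id$ and must show that $\overline\psi_{1,x}\co((V_\fg)_x,\overline d_x)\to((V_\fh)_x,\overline d_x)$ is a quasi-isomorphism for every $x\in X$. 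Fix $x$ and $k\in\ZZ$ and take $\cM=(\mathrm{pt},\RR\oplus\RR\epsilon)$ with $\epsilon$ of cohomological degree $k$ and $\epsilon^2=0$. For the point $g=x$, the curving vanishes on restriction to a point and $\epsilon^2=0$ annihilates all higher brackets and all higher components of $g^\ast\psi$, so $L_x$ is the abelian dg Lie algebra $((V_\fg)_x,\overline d_x)$ shifted by $k$, and $\bB\Psi(\cM)$ on the $x$-summand is $\mathrm{MC}_\bullet$ of the chain map $\overline\psi_{1,x}$ shifted by $k$. By Dold--Kan, $\pi_i$ of this Maurer--Cartan space at the base point $0$ is $H^{1-i-k}\bigl((V_\fg)_x,\overline d_x\bigr)$; since $\bB\Psi(\cM)$ is a weak equivalence, $\overline\psi_{1,x}$ induces an isomorphism on $H^{1-i-k}$ for all $i\ge 0$, and letting $k$ range over $\ZZ$ we conclude that $\overline\psi_{1,x}$ is a quasi-isomorphism. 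As this holds for every $x$, the fibrewise criterion shows that $\overline\psi_1$, hence $\Gr\psi_1$, is a quasi-isomorphism, i.e.\ $\psi$ is a weak equivalence; so $\Psi$ is a weak equivalence of $\L8$ spaces.
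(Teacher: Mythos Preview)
Your argument is essentially correct, and both halves follow the same broad outline as the paper: reduce to the base-fixing case via diffeomorphism, then analyze Maurer--Cartan simplicial sets. The execution differs in two places worth noting.

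For \emph{preservation}, the paper does not invoke a packaged filtered Goldman--Millson theorem but instead runs the artinian induction explicitly: it climbs the tower $\sO_\cM/\sI_\cM^d$, uses that each stage is a Kan fibration with fibre the Dold--Kan space of the abelian dg Lie algebra $\eta^\ast\fg\otimes_{\Omega^\ast_M}\sI_\cM^d/\sI_\cM^{d+1}$, and then filters that fibre by $\Omega^{\ge 1}_M$ to reduce to $\Gr\psi$. Your one-step filtration by powers of $\sI_\cM$ reaches the same conclusion, but your parenthetical that the graded pieces $\sI_\cM^k/\sI_\cM^{k+1}$ are ``locally free, globally acyclic'' is neither true in general nor needed. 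What you actually use is that $g^{-1}\overline\psi_1$ is a quasi-isomorphism of bounded complexes of finitely generated projective $\cinf_M$-modules, hence its cone is contractible, hence tensoring with \emph{any} complex of $\cinf_M$-modules preserves acyclicity. That is the correct justification; the parenthetical should be dropped.

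For \emph{detection}, your approach is genuinely different from the paper's and arguably cleaner. The paper probes with the single nil dg manifold $X_{dR}=(X,\Omega^\ast_X)$, climbs its tower, and reads off that $\Gr\psi_1$ is a quasi-isomorphism from the induced equivalences on the Dold--Kan fibres $\Omega^d_X\fg/\Omega^{d+1}_X\fg$. You instead probe pointwise with the square-zero extensions $(\mathrm{pt},\RR\oplus\RR\epsilon)$ for varying $|\epsilon|=k$, which immediately linearizes everything and lets Dold--Kan read off $H^{1-i-k}((V_\fg)_x)$. Your method is more elementary and makes the fibrewise criterion transparent; the paper's method stays global and avoids the separate argument that fibrewise quasi-isomorphism of bounded complexes of vector bundles implies sheafwise quasi-isomorphism. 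Both are valid.
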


In short, a map of $\L8$ spaces $\Psi \co (X, \fg) \to (Y, \fh)$ is a weak equivalence if and only if the induced map $\bB \Psi \co \bB \fg \to \bB \fh$ is a weak equivalence of derived stacks.

\begin{proof}
Let us prove the first claim. Assume that $\Psi \co (X, \fg) \to (Y, \fh)$ is a weak equivalence. We will argue by artinian induction, i.e., by working up the natural tower of commutative dg algebras 
\[
\cO_\cM \xto{q_{k-1}} \cO_\cM/\cI_\cM^{k-1} \xto{q_{k-2}} \cdots \xto{q_2} \cO_\cM/\cI_\cM^{2} \xto{q_1} \cO_\cM/\cI_\cM \cong \cinf_M
\]
for any nil dg manifold $\cM = (M, \cO_\cM)$ whose characterizing nilpotent dg ideal $\cI_\cM$ is nilpotent of order $k$. Let $M_{sm}$ denote the nil dg manifold $(M,\cinf_M)$ associated to the smooth manifold $M$. The simplicial set $\bB\fg(M_{sm})$ is the discrete simplicial set given by the set of smooth maps $\op{Maps}(M,X)$. As the underlying smooth map $f \co X \to Y$ for $\Psi$ is a diffeomorphism, the map of sets $f \circ -\co \op{Maps}(M,X) \to \op{Maps}(M,Y)$ is an isomorphism. Without loss of generality, we can now assume that $X=Y$ and the underlying map $f$ is the identity.  

Fix a smooth map $\eta \co M \to X$; it is sufficient to consider the component over this fixed map. Now, by Lemma C.3 of \cite{GGLoop}, we know that the~map
\[
\bB \fg(q_{d})\co \bB \fg(M, \cO_\cM/\cI_\cM^{d+1}) \to \bB \fg(M, \cO_\cM/\cI_\cM^d)
\]
is a Kan fibration for every $d$, with the fiber determined by the Maurer-Cartan simplicial set of the abelian dg Lie algebra $\eta^\ast \fg \otimes_{\Omega^\ast_M} \cI_\cM^d/\cI_\cM^{d+1}$, which is simply the Dold-Kan simplicial set for a cochain complex. (See the discussion before Lemma C.8.) Then $B\Psi$ induces a map between the fiber sequences  
\[
\xymatrix{
\fib(\bB \fg(q_{d})) \ar[r] \ar[d]^{B\Psi_\fib} & \bB \fg(M, \cO_\cM/\cI_\cM^{d+1}) \ar[r] \ar[d]^{B\Psi_{d+1}} & \bB \fg(M, \cO_\cM/\cI_\cM^d) \ar[d]^{B\Psi_{d}} \\
\fib(\bB \fh(q_{d}))\ar[r] & \bB \fh(M, \cO_\cM/\cI_\cM^{d+1}) \ar[r] & \bB \fh(M, \cO_\cM/\cI_\cM^d)
}
\]
and the base is a homotopy equivalence by induction. Hence, it is sufficient to check that the map of fibers
\[
\Psi_\fib \co \eta^\ast \fg \otimes_{\Omega^\ast_M} \cI_\cM^d/\cI_\cM^{d+1} \to \eta^\ast \fh \otimes_{\Omega^\ast_M} \cI_\cM^d/\cI_\cM^{d+1}
\]
is a quasi-isomorphism. Consider the filtration on the fibers induced by the ideal $\Omega^{\ge 1}_M$. Note that $\Psi_\fib$ is a filtration-respecting map of filtered cochain complexes.  We have an isomorphism
\[
\Gr (\eta^\ast \fg \otimes_{\Omega^\ast_M} \cI_\cM^d/\cI_\cM^{d+1}) \cong
\eta^{-1} (\Gr \fg) \otimes_{\eta^{-1} \Omega^\sharp_X} \Omega^\sharp_M \otimes_{\Omega^\sharp_M} \cI_\cM^d/\cI_\cM^{d+1} ,
\]
and similarly when $\fh$ is substituted for $\fg$. Since the $\L8$ map $\psi \co \fg \to \fh$ is a weak equivalence, it induces a quasi-isomorphism at the level of associated gradeds, and hence a quasi-isomorphism of the associated gradeds of the above fibers. Thus, the map of spectral sequences arising from $\Psi_\fib$ is a quasi-isomorphism on the first page and so $\Psi_\fib$ is a quasi-isomorphism.

Proving the second claim is similar. Suppose we know that $\bB \Psi \co \bB \fg \to \bB \fh$ is a weak equivalence of derived stacks. Hence, on any $M_{sm}$---the nil dg manifold $(M,\cinf_M)$ associated to a smooth manifold $M$---we must have an isomorphism of sets
\[
f \circ -\co \bB \fg(M_{sm}) = \op{Maps}(M,X) \to \op{Maps}(M,Y) = \bB \fh(M_{sm}),
\]
and hence $f \co X \to Y$  must be a diffeomorphism. Again, without loss of generality, we can assume $Y = X$ and $f$ is the identity. It remains to show that $\Gr \psi$ is a weak equivalence of $\L8$ algebras. We will consider the nil dg manifold $X_{dR} = (X, \Omega_X)$, whose canonical tower of algebras is
\[
\Omega_X^* \to \Omega_X^*/\Omega_X^{n -1} \to \cdots \to \Omega_X^*/\Omega^{\geq 1}_X \cong \cinf_X,
\]
where $n = \dim X$. Moreover, we will only consider the Maurer-Cartan simplicial sets over the identity map $X \to X$. In that case, our hypothesis implies that for every $d$, $\bB \Psi$ induces a weak homotopy equivalence
\[
DK(\Omega^d_X \fg/\Omega^{d+1}_X \fg) \simeq DK(\Omega^d_X \fh/\Omega^{d+1}_X \fh)
\]
of the simplicial sets associated by the Dold-Kan correspondence to these abelian dg Lie algebras, which describe the fibers as we work up the tower for $X_{dR}$. As $\Gr \fg = \bigoplus_d\Omega^d_X \fg/\Omega^{d+1}_X \fg$, we see that the map of cochain complexes, induced from $\psi$, from $\Gr \fg$ to $\Gr \fh$ must be a quasi-isomorphism. 
\end{proof}

\subsection{Vector bundles on $\L8$ spaces and shifted symplectic structures}
\label{l8vb}

$\L8$ spaces admit straightforward generalizations of many geometric constructions. We begin by recalling the relevant notion of vector bundle and then of shifted symplectic structures.

\subsubsection{}

We now introduce a category $\VB(X,\fg)$ of vector bundles on an $\L8$ space~$(X,\fg)$.

\begin{definition}\label{defn:vb}
Let $(X, \fg)$ be an $\L8$ space.  A {\it vector bundle} on $(X,\fg)$ is a $\ZZ$-graded vector bundle $\pi\co V \to X$ where the sheaf of smooth sections $\cV$ over $X$ is equipped with the structure of an $\Omega^\sharp_X$-module and where the direct sum of sheaves $\fg \oplus \cV$ is equipped with the structure of a curved $\L8$ algebra over $\Omega^*_X$, which we denote $\fg \ltimes \cV$, such that
\begin{enumerate}
\item[(1)] the maps of sheaves given by inclusion $\fg \hookrightarrow  \fg \ltimes \cV$ and by the projection $\fg \ltimes \cV \to \fg$ are maps of $\L8$ algebras, and
\item[(2)] the Taylor coefficients $\ell_n$ of the $\L8$ structure vanish on tensors containing two or more sections of~$\cV$.
\end{enumerate}
The {\it sheaf of sections of $\cV$ over $(X,\fg)$} means $\widehat{C}^* (\fg, \cV[1])$, the sheaf on $X$ of dg $\widehat{C}^*(\fg)$-modules given by the Chevalley-Eilenberg cochains of $\cV$ as a $\fg$-module. The {\it total space} for the vector bundle $\cV$ over $(X,\fg)$ is the $\L8$ space $(X, \fg \ltimes \cV)$.
\end{definition}

\begin{remark}\label{l8beck}
This definition is just a version of a module over an $\L8$ algebra, where we require the underlying module to come from a vector bundle on the manifold $X$ over which the curved $\L8$ algebra $\fg$ lives. Under the usual correspondence between modules and abelian group objects, this definition amounts to giving an abelian group object in the category of $\L8$ spaces over $(X,\fg)$, i.e., the over-category $\linfcatbf(X)_{/(X,\fg)}$. In other words, we are just deploying the notion of a Beck module. Compare with Remark \ref{lalgdbeck}. (For a quick and clear explanation of how Beck modules compare to the more familiar notions, see Lemma 1.3 of \cite{BergerMoerdijk}.) 
\end{remark}

For example, the tangent bundle to $(X, \fg)$ is given by $\fg[1]$ equipped with the adjoint action of $\fg$. Dually, the cotangent bundle is given by $\fg^\vee [-1]$ equipped with the coadjoint action.  It follows that $k$-forms on $(X, \fg)$ are given~by 
\[
\Omega^k_{(X,\fg)} = \widehat{C}^* (\fg , (\Lambda^k \fg) [-k]),
\]
as discussed in \cite{GGLoop}.

\begin{definition}
Let $\cV$ and $\cW$ be vector bundles on the $\L8$ space $(X, \fg)$. A {\em map of vector bundles} from $\cV$ to $\cW$ is a map $\phi \co \widehat{C}^* (\fg, \cV[1]) \to \widehat{C}^* (\fg , \cW[1])$ of dg $\widehat{C}^* (\fg)$-modules. 
\end{definition}

We wish to pinpoint the appropriate notion of weak equivalence of vector bundles. A map of vector bundles $\phi$ induces a map $\Gr \phi \co \Gr \widehat{C}^* (\fg, \cV[1]) \to \Gr \widehat{C}^* (\fg , \cW[1])$ of dg $\Gr \widehat{C}^* (\fg)$-modules, with respect to the filtration by powers of the nilpotent ideal. Since $\Gr \fg$ is a non-curved $\L8$ algebra, there is a natural decreasing filtration 
\[
\cdots \hookrightarrow F_{\Gr}^1  \Gr \widehat{C}^* (\fg, \cV[1]) \hookrightarrow  F_{\Gr}^0  \Gr \widehat{C}^* (\fg, \cV[1]) =  \Gr \widehat{C}^* (\fg, \cV[1]) 
\]
on such a module $\Gr \widehat{C}^* (\fg, \cV[1])$ by symmetric powers of the dual of $\fg$. (Compare to how, for an ordinary Lie algebra $\fg$, one filters $C^*(\fg,V)$ by $F^k C^*(\fg,V) = \Sym^{\geq k}(\fg^\vee[-1]) \otimes V$.)
The quotient of $\Gr \widehat{C}^* (\fg, \cV[1])$ by the first piece of the filtration has underlying graded vector space $\cV[1]$, which is thus equipped with a $\Omega^\sharp$-linear differential. Let $\phi_\fib$ denote the map
\[
\Gr \widehat{C}^* (\fg, \cV[1])/ F_{\Gr}^1  \Gr \widehat{C}^* (\fg, \cV[1]) \to \Gr \widehat{C}^* (\fg, \cW[1])/ F_{\Gr}^1  \Gr \widehat{C}^* (\fg, \cW[1]) 
\]
induced by~$\Gr\phi$.

\begin{definition}\label{vbwe}
Let $\phi \co \widehat{C}^* (\fg, \cV[1]) \to \widehat{C}^* (\fg , \cW[1])$ be a map of vector bundles on $(X, \fg)$.  Then $\phi$ is a {\em weak equivalence} if $\phi_\fib$ is a quasi-isomorphism.
\end{definition}

Note that a vector bundle map $\phi$ induces a map of $\L8$ spaces on the total spaces, and it is a weak equivalence of vector bundles if and only if the map of total spaces is a weak equivalence. This notion of weak equivalence is strictly stronger than requiring the map of sections $\phi$ to be a filtered quasi-isomorphism.

\subsubsection{}

Recall that a symplectic form is a 2-form that is nondegenerate and closed. This definition works perfectly well in the derived setting, so long as one recognizes that being closed---i.e., being annihilated by the differential of the de Rham complex---is data and not a property.

Let $\Omega^{2,cl}_{(X,\fg)}$, the complex of {\em closed 2-forms} on the $\L8$ space, be the totalization of the double complex
\[
\Omega^2_{(X,\fg)} \xto{d_{dR}} \Omega^3_{(X,\fg)} \xto{d_{dR}} \Omega^4_{(X,\fg)} \xto{d_{dR}} \cdots.
\]
A {\em closed 2-form} is a cocycle in this complex. Every element $\omega$ of $\Omega^{2,cl}_{(X,\fg)}$ has an underlying 2-form $i(\omega)$ by taking its image under the truncation map $i \co \Omega^{2,cl}_{(X,\fg)}\to\Omega^2_{(X,\fg)}$.

\begin{definition}
An {\em $n$-shifted symplectic form} on an $\L8$ space $(X,\fg)$ is a closed 2-form $\omega$ of cohomological degree $n$ such that the induced map $i(\omega) \co T_{(X,\fg)} \to T^\vee_{(X,\fg)}[n]$ is a quasi-isomorphism.
\end{definition}

\subsection{Discussion of related work}\label{sect:related}

In the last few decades, derived geometry, particularly of an algebraic flavor, has developed rapidly,
and we make no attempt here to place $L_\infty$ spaces into that broader context.
There are strong similarities, however, with two recent, substantial works, \cite{GR} and \cite{CPTVV},
that we would like to sketch.

A central tenet  of \cite{GR} and \cite{CPTVV} is that, to first approximation, derived geometry is just affine geometry over the de Rham stack. 
Note that any derived stack (or even prestack) $X$ maps canonically to its underlying de Rham stack $X_{dR}$,
which does not see nilpotent or derived directions.
In a sense, $X_{dR}$ just sees the macroscopic structure of $X$ and not its very local geometry;
 it only sees the ``topology.''
The idea then is that objects over a de Rham stack should be thought of as local systems on $X$,
and \cite{GR,CPTVV} provide precise theorems in this direction. 
Hence, constructions over the de Rham stack are fiberwise constructions over $X$ with the extra data of a flat connection relating the fibers.

A similar idea underlies the definition of $\L8$ spaces.
Given a smooth manifold, we have the associated {\it de Rham space} $X_{dR} = (X, \Omega^\ast_X)$. The space $X_{dR}$ presents the de Rham stack in our smooth setting. 
The definition of an $L_\infty$ space $(X,\fg)$ determines a derived stack $\bB \fg$ living over $X_{dR}$, thereby porting the overarching paradigm to a smooth setting.
On the other hand, we have not developed an {\em a priori} global derived geometry, in contrast to the foundational work of To\"en-Vezzosi and Lurie, so we cannot take advantage of these two perspectives, as done in~\cite{GR,CPTVV}.

In Section \ref{sect:lieFMP} we discuss how recent work in derived geometry connects with Lie algebroids.

\section{Recollections on Lie algebroids}

We will give a brief overview of the definitions and constructions from the theory of Lie algebroids that are relevant to our work here. Standard references for Lie algebroids include Mackenzie \cite{Mackenzie} and Rinehart \cite{Rinehart}; we also recommend the article of Fernandes~\cite{Fernandes}.

\subsection{The objects of study}\label{sect:Liedefn}

\begin{definition}
A {\it Lie algebroid} on a smooth manifold $X$ is a vector bundle $L \to X$ equipped with the structure of a Lie algebra on its sheaf of smooth sections and an {\it anchor map} $\rho \co L \to T_X$, which is a map of vector bundles, such~that
\begin{enumerate}
\item the map on sections induced by $\rho$ is a map of Lie algebras and
\item for $x,y \in \Gamma (L)$ and $f \in C^\infty_X$, we have the Leibniz rule
\[
[x, fy] = f [x,y] + (\rho(x) f) y.
\]
\end{enumerate}
We will use $\cL$ to denote the sheaf of smooth sections of~$L$.
\end{definition}

Two classes of examples give a sense of the range of Lie algebroids. At the purely algebraic end, note that every Lie algebra is a Lie algebroid over the point. At the geometric end, a regular foliation on a smooth manifold gives an example, where the anchor map is the inclusion of the subbundle into the tangent bundle. In general, the image of the anchor map $\rho$ is a (possibly singular) foliation, and the kernel of the anchor map (on sections) is a $\cinf$-linear Lie algebra. Thus, a generic Lie algebroid is a complicated mix of a foliation and $\cinf$-linear Lie algebra.

\begin{definition}
A {\it map of Lie algebroids} $F\co L \to L'$ is a map of vector bundles
\[
\xymatrix{
L \ar[d] \ar[r]^{\phi} & L' \ar[d] \\
X \ar[r]^{f} & Y
}
\]
such that 
\begin{enumerate}
\item the anchor maps intertwine $\rho' \circ \phi = df \circ \rho$ and
\item the natural map from sections of $L$ to sections of the pullback bundle $f^{-1} L'$ is a map of Lie algebras.
\end{enumerate}
We say a map is {\it base-fixing} if the map of manifolds $f$ is the identity.
\end{definition}

There are thus two categories of interest to us. Let $\lalgd$ denote the category of all Lie algebroids and all maps of Lie algebroids. For each manifold $X$, there is also the category $\lalgdbf(X)$ whose objects are Lie algebroids on $X$ and whose morphisms are the base-fixing maps thereof. (These are 1-categories, not $(\infty,1)$-categories.)

Note that on a given manifold $X$, there are two distinguished Lie algebroids: the trivial algebroid $L = 0$ and the tangent bundle $\id\co L = T \to T$ with the identity as the anchor map. Every other Lie algebroid sits between them. Succinctly, we might say that they are the initial and terminal objects of $\lalgdbf(X)$, respectively.

\subsubsection{}

There are natural dg generalizations of Lie algebroids. We will work with the following. 

\begin{definition}
A {\it dg Lie algebroid} is a $\ZZ$-graded vector bundle $L \to X$ of total finite rank whose sheaf $\cL$ of graded smooth sections is equipped with a $\cinf_X$-linear differential, the structure of a dg Lie algebra (over the constant sheaf $\CC_X$), and an anchor map $\rho\co \cL \to \cT_X$ of dg Lie algebras such that
\[
[x, fy] = f [x,y] + (\rho(x) f) y
\]
for $x,y \in \cL$ and $f \in C^\infty_X$. (In other words, ignoring the differential on $\cL$, we have a graded Lie algebroid, and the differential is compatible with the bracket by being a derivation of the graded Lie algebra.) A {\it map of dg Lie algebroids} is a map of the underlying graded Lie algebroids that is also a cochain map.
\end{definition}

Again we have two categories of interest: $\dglalgd$ and $\dglalgdbf(X)$. We say a map of dg Lie algebroids $(f, \phi)$ is a {\it weak equivalence} if the underlying map $f$ is a diffeomorphism and if $\phi$ induces a quasi-isomorphism of sheaves of cochain complexes. Consequently, the categories $\dglalgd$ and $\dglalgdbf(X)$ become categories with weak equivalences. (See Vezzosi's work \cite{Vez15} for how to deal properly with the associated $\infty$-categories; he works, of course, in the setting of derived algebraic geometry.)

\begin{remark}
Our constructions seem to work without difficulty for reasonable notions of $\L8$ algebroid, but in the literature there seems to be some variation in the meaning of this term. We indicate in Remark \ref{l8algebroid} a definition that admits the easiest direct modification of our arguments.
\end{remark}

\subsection{The Chevalley-Eilenberg complex of a Lie algebroid}

To any Lie algebroid $\rho \co L \to T_X$ there is an associated commutative dg algebra. We will call it the {\it Chevalley-Eilenberg cohomology complex of $L$} and denote it $\calgd(L)$, because it is modeled on the Chevalley-Eilenberg cochain complex of a Lie algebra. It is often also called the {\it de Rham complex of $L$} because for the Lie algebroid $\id\co L = T \to T$, we have $\calgd(L) = \Omega^*_X$, the usual de Rham complex. 

The complex is constructed as follows. Let $L^\vee$ be the dual vector bundle to $L$ and consider the map $d_L \co \Gamma (X,\Lambda^m L^\vee) \to \Gamma (X,\Lambda^{m+1} L^\vee)$ given~by
\begin{align*}
\MoveEqLeft[8] (d_L \alpha)(x_0 , \dotsc, x_m) = \frac{1}{m+1} \sum_{k=0}^{m+1} (-1)^k \rho (x_k) \alpha (x_0, \dotsc, \widehat{x_k} , \dotsc , x_m )\\[1ex]
& + \frac{1}{m+1} \sum_{k < l} (-1)^{k+l+1} \alpha ([x_k , x_l], x_0 , \dotsc , \widehat{x_k} , \dotsc , \widehat{x_l} , \dotsc , x_m ).
\end{align*}
Define $\calgd(L)$ to be the cochain complex 
\[
\Gamma (X, \Lambda^0 L^\vee) \xto{d_L} \Gamma (X, L^\vee)\xto{d_L} \cdots \xto{d_L} \Gamma (X,\Lambda^{n-1} L^\vee) \xto{d_L} \Gamma (X,\Lambda^n L^\vee)
\]
where $n = \op{rk}(L)$. (Note that the first term is $\cinf(X)$.) We will let $H^\ast_L(X)$ denote the cohomology of~$\calgd(L)$.  

Notice that $\calgd(L)$ naturally receives a map of commutative dg algebras from $\Omega^*_X$, arising from the identity on $\cinf(X) = \Gamma (X, \Lambda^0 L^\vee)$ and the dual to the anchor map $\rho^\vee \co \Omega^1(X)  \to \Gamma (X, L^\vee)$. In particular, we have a map $H^*_{dR}(X)~\to~H^*_L(X)$. 

These constructions are local in nature: we can consider the Lie algebroid restricted to any open subset of $X$, and so $\calgd(L)$ provides  a sheaf of commutative dg algebras on $X$. In the case of $T_X$ as a Lie algebroid, we recover the de Rham complex $\Omega^*_X$ as a sheaf. We will use this notation $\calgd(L)$ to refer to this sheaf, somewhat abusively. 

As in the case of dg Lie algebras, the definition of $\calgd (L)$ canonically extends to dg Lie algebroids.  The dg Lie algebroid structure of $L$ defines an internal differential on each graded vector space appearing in the complex above, so that we have a double complex. Thus, for a dg Lie algebroid $L$, we define $\calgd (L)$ to be the associated total complex.

\section{Lie algebroids as $\L8$ spaces}

The first important result of this paper is that every Lie algebroid $\rho \co L \to T_X$ has a naturally associated $\L8$ space $(X, \op{enh}(L))$---hence a derived stack---in the sense described in section \ref{sect:L8spaces}. In this section, we develop this result in two stages. First, we explain how the $\infty$-jet bundle of $\calgd(L)$ provides a curved $\L8$ algebra over $\Omega^*_X$, which is an explicit construction in differential geometry. Second, we verify the functoriality of this construction, which involves categorical issues. A precise statement of the main result appears there.

\subsection{The construction of $\op{enh}(L)$}\label{sect:construction}

The crucial tool here is the functor $J$ that assigns to a vector bundle$E$, its $\infty$-jet bundle $J(E)$. In appendix \ref{app}, we provide proofs and references for the facts we use here. Our arguments amount to a variation on constructions often described as Gelfand-Kazhdan formal geometry or Fedosov resolutions.

We begin with some preliminaries. For any vector bundle $V$, let $\calsym(V)$ denote the sheaf of smooth sections of the filtered vector bundle $\csym(V) = \lim_k \Sym^{\leq k}(V)$. (It is also fruitful to view $\csym(V)$ as a pro-vector bundle.) Recall that every $\infty$-jet bundle $J(E)$ admits a non-canonical isomorphism $\sigma_E \co \csym(T^\vee_X) \ot E \to J(E)$ of filtered vector bundles on $X$. In particular, the $\infty$-jet bundle of the trivial line bundle, which we denote simply $J$, admits a non-canonical isomorphism of filtered algebras to $\csym(T^\vee_X)$. Moreover, the sheaf of smooth sections $\cJ(E)$ of $J(E)$ is a module over $\cJ$, which is the sheaf of $\infty$-jets of smooth functions. Hence, we can ask for compatible isomorphisms $\sigma_0 \co \csym(T^\vee_X) \to J$ and $\sigma_E \co \csym(T^\vee_X) \ot E \to J(E)$ so that the natural module structures intertwine.

We now combine these constructions in the case of interest.
 
\begin{lemma}\label{lem:algebra}
Fix an isomorphism $\sigma_0 \co \csym(T^\vee_X) \to J$ and fix a compatible isomorphism $\sigma_{L^\vee} \co \csym(T^\vee_X)\ot L^\vee \to J(L^\vee)$. Then we have an isomorphism 
 \[
\calsym \left ( T_X^\vee \oplus L^\vee [-1] \right ) \xrightarrow{\cong} \cJ (\Sym(L^\vee[-1]))
 \] 
 of $C^\infty_X$-algebras.
\end{lemma}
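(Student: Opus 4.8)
The plan is to assemble the claimed isomorphism directly from the two fixed data $\sigma_0$ and $\sigma_{L^\vee}$, by presenting both the source and the target as graded-commutative $C^\infty_X$-algebras generated over their degree-zero part by degree-one generators, matching the degree-zero parts via $\sigma_0$ and the degree-one generators via $\sigma_{L^\vee}$, and checking that this assignment extends multiplicatively. So the key input beyond linear algebra is the monoidal behavior of the $\infty$-jet functor, which I would take from Appendix \ref{app}.

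First I would strip off completions on both sides. Because $L^\vee[-1]$ is concentrated in (odd) cohomological degree $1$, the algebra $\Sym_{C^\infty_X}(L^\vee[-1])$ truncates at $n=\op{rk}(L)$ --- it is $\Lambda^\bullet L^\vee$ with the standard degree shifts --- and is in particular a finite-rank graded vector bundle. An elementary computation (using $\Sym^q(L^\vee[-1]) = 0$ for $q > n$, so that the product over $q$ is finite) then gives a canonical isomorphism of graded $C^\infty_X$-algebras
\[
\calsym\bigl(T_X^\vee \oplus L^\vee[-1]\bigr) \;\cong\; \calsym(T_X^\vee) \otimes_{C^\infty_X} \Sym(L^\vee[-1]),
\]
the only surviving completion lying in the $T_X^\vee$ factor. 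Symmetrically, I would invoke from the appendix that $\cJ(-)$ distributes over finite direct sums and is (lax symmetric) monoidal, hence commutes with symmetric powers, $\cJ(\Sym^k_{C^\infty_X} E)\cong\Sym^k_{\cJ}\cJ(E)$, naturally and as algebras; applied to $E = L^\vee[-1]$ (so that the symmetric algebra over $\cJ$ again truncates for degree reasons, being built from a free $\cJ$-module of rank $n$ in odd degree) this gives
\[
\cJ\bigl(\Sym(L^\vee[-1])\bigr) \;\cong\; \Sym_{\cJ}\bigl(\cJ(L^\vee)[-1]\bigr).
\]

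Now I would feed in the fixed isomorphisms. By hypothesis $\sigma_0$ identifies $\calsym(T_X^\vee)$ with $\cJ$ as sheaves of $C^\infty_X$-algebras, and $\sigma_{L^\vee}$ --- being compatible with $\sigma_0$ --- identifies $\cJ(L^\vee)$ with $\calsym(T_X^\vee)\otimes_{C^\infty_X}L^\vee$ as a module over $\calsym(T_X^\vee)\cong\cJ$. Substituting into the previous display and applying the base-change formula $\Sym_R(R\otimes_{C^\infty_X}M)\cong R\otimes_{C^\infty_X}\Sym_{C^\infty_X}(M)$ for symmetric algebras, with $R=\calsym(T_X^\vee)$ and $M=L^\vee[-1]$, produces
\[
\cJ\bigl(\Sym(L^\vee[-1])\bigr) \;\cong\; \calsym(T_X^\vee)\otimes_{C^\infty_X}\Sym(L^\vee[-1]) \;\cong\; \calsym\bigl(T_X^\vee \oplus L^\vee[-1]\bigr),
\]
the last step being the first display. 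Composing these identifications yields the isomorphism of the lemma; since every arrow in the chain (the decomposition of $\calsym$ of a sum, the monoidal constraint of $\cJ$, base change for symmetric algebras, and $\sigma_0$) is a morphism of $C^\infty_X$-algebras, so is the composite.

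I expect the main obstacle to be the bookkeeping around the jet functor, not any hard analysis: one must pin down precisely, from the appendix, that $\cJ(-)$ is symmetric monoidal in a way that makes $\cJ(\Sym^k E)\cong\Sym^k_{\cJ}\cJ(E)$ natural and multiplicative, and that ``compatible'' for the pair $(\sigma_0,\sigma_{L^\vee})$ means exactly that $\sigma_{L^\vee}$ intertwines the $\cJ$-action on $\cJ(L^\vee)$ with the $\calsym(T_X^\vee)$-action along $\sigma_0$ --- this is what licenses the base-change step. The usual delicacies with completed symmetric algebras do not intervene here precisely because the shift places $L^\vee$ in odd degree, so every symmetric algebra of $L^\vee[-1]$ appearing above is finite.
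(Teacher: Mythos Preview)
Your proposal is correct and follows essentially the same route as the paper's proof: both use the symmetric monoidality of the jet functor (Proposition~\ref{prop:jetsm}) to identify $\cJ(\Sym(L^\vee[-1]))$ with $\Sym_{\cJ}(\cJ(L^\vee)[-1])$, then feed in the compatible pair $(\sigma_0,\sigma_{L^\vee})$ and apply base change for symmetric algebras. Your additional remarks on why the completions cause no trouble (the odd-degree truncation of $\Sym(L^\vee[-1])$) are a welcome clarification the paper leaves implicit.
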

 
 \begin{proof}
 As the functor $V \mapsto \cJ(V)$ is symmetric monoidal by proposition \ref{prop:jetsm}, we see that the algebra $\Sym(L^\vee[-1])$ in vector bundles gets mapped to the algebra $\cJ(\Sym(L^\vee[-1])) \cong \Sym_\cJ(\cJ(L^\vee)[-1])$.~But
 \[
 \Sym_\cJ(\cJ(L^\vee)[-1]) \cong \Sym_{\calsym(T^\vee_X)}(\calsym(T^\vee_X) \ot_{\cinf} \cL^\vee[-1])
 \]
 by our choice of isomorphisms. As base change commutes with taking free algebras, we~have
 \begin{align*}
 \Sym_{\calsym(T^\vee_X)}(\calsym(T^\vee_X) \ot_{\cinf} \cL^\vee[-1]) &\cong \calsym(T^\vee_X) \ot_{\cinf} \Sym_{\cinf}(\cL^\vee[-1]) \\
 &\cong \calsym(T^\vee_X) \ot_{\cinf} \calsym(L^\vee[-1])\\
 &\cong \calsym(T^\vee_X \oplus L^\vee[-1]), 
 \end{align*}
 as desired.
 \end{proof}
 
Recall that for a Lie algebroid $L$, the Chevalley-Eilenberg complex $\calgd(L)$ has underlying graded algebra $\calsym(L^\vee[-1])$. Its differential $d_L$ is a differential operator, so that taking $\infty$-jets, we obtain a sheaf of  commutative dg algebras
 \[
(\cJ(\Sym(L^\vee[-1])), \cJ(d_L)).
 \]
We will denote this sheaf of commutative dg algebras by $\cJ(\calgd(L))$, for brevity's sake. As every $\infty$-jet bundle $J(E)$ has a canonical flat connection (proposition \ref{prop:grconn}), we can take the de Rham complex of $\cJ(\calgd(L))$ to obtain a commutative dg algebra over $\Omega^*_X$. We denote this sheaf of commutative dg $\Omega^*_X$-algebras by~$dR(J(\calgd(L)))$. 

We now provide our primary construction.

 \begin{theorem}\label{thm:construction}
Let $\rho \co L \to T_X$ be a Lie algebroid over a smooth manifold $X$. For any choice of compatible splittings $\sigma = (\sigma_0, \sigma_{L^\vee})$, there exists a curved $\L8$ algebra $\op{enh}(L)^\sigma$ over $\Omega^\ast_X$ such~that
 \begin{enumerate}
 \item $\op{enh}(L)^\sigma \cong \Omega^\sharp_X (T_X[-1] \oplus L)$ as $\Omega^\sharp_X$-modules;
 \item $\widehat{C}^* ( \op{enh}(L)^\sigma) \cong dR(J(\calgd(L)))$ as commutative $\Omega^\ast_X$-algebras; and
 \item the map sending a section to its~$\infty$-jet, 
 \[
 j_\infty \co \calgd(L) \hookrightarrow dR (J(\calgd(L))) \cong \widehat{C}^* (\op{enh}(L)^\sigma),
 \]
 defines a quasi-isomorphism of  $\Omega^\ast_X$-algebras.
 \end{enumerate}
 \end{theorem}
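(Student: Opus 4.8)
The plan is to build the curved $\L8$ algebra $\op{enh}(L)^\sigma$ by transporting structure along the isomorphisms we have assembled, and then to verify the three claimed properties more or less in sequence, since (1) essentially defines the underlying module, (2) is the heart of the matter, and (3) follows formally from standard facts about jets. First I would fix the compatible splittings $\sigma = (\sigma_0, \sigma_{L^\vee})$ and invoke Lemma~\ref{lem:algebra} to obtain an isomorphism of $\cinf_X$-algebras $\calsym(T^\vee_X \oplus L^\vee[-1]) \cong \cJ(\Sym(L^\vee[-1]))$. Taking the de Rham complex of the canonical flat connection on the right-hand side (Proposition~\ref{prop:grconn}) and the standard Koszul-type differential on the left, I would argue that $dR(\cJ(\calgd(L)))$ has underlying graded $\Omega^\sharp_X$-algebra $\Omega^\sharp_X \otimes_{\cinf_X} \calsym(T^\vee_X \oplus L^\vee[-1])$. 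Reading this as a completed symmetric algebra $\csym_{\Omega^\sharp_X}\!\bigl((T^\vee_X \oplus L^\vee[-1])[-1]\bigr)$ on the $\Omega^\sharp_X$-module dual to $(T_X[-1] \oplus L)[1]$, we recognize it, by the defining recipe for $\widehat{C}^*(-)$ of a curved $\L8$ algebra, as the Chevalley--Eilenberg cochains of a (uniquely determined) curved $\L8$ structure on $V_\fg = \Omega^\sharp_X(T_X[-1]\oplus L)$. This gives claim (1) and, by construction, claim (2); one must only check that the differential on $dR(\cJ(\calgd(L)))$ actually is a derivation extending one on the constants $\Omega^*_X$ and lands in $\sI \cdot V_\fg[1]$ modulo constants, i.e.\ that the curving condition (ii) of Definition~\ref{curvedl8} holds. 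The latter is where the flatness and the compatibility of the splittings with the module structure get used: the $\sI = \Omega^{\ge 1}_X$ part of the differential comes precisely from the connection 1-form, which is $\Omega^{\ge 1}_X$-valued, and the $\Omega^0_X$ part is the fiberwise differential $\cJ(d_L)$, which vanishes on the constant jet.

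For claim (3), I would use that every $\infty$-jet bundle carries a canonical flat connection whose de Rham complex resolves the original sheaf: concretely, the Grothendieck connection on $\cJ(E)$ has the property that the inclusion of $E$ as the sheaf of horizontal (flat) sections, sending a section $s$ to its $\infty$-jet $j_\infty(s)$, is a quasi-isomorphism $E \xrightarrow{\simeq} dR(\cJ(E))$ (this is the content of the Poincar\'e-lemma-type statement for jets, recorded in the appendix). Applying this to the dg vector bundle $\Lambda^\bullet L^\vee = \Sym(L^\vee[-1])$, naturality of $j_\infty$ in the bundle and compatibility of $\cJ(-)$ with the differential $d_L$ (which holds because $d_L$ is a differential operator, so $\cJ(d_L)$ makes sense and commutes with the jet map) show that $j_\infty \co \calgd(L) \to dR(\cJ(\calgd(L)))$ is a map of commutative dg algebras; it is a quasi-isomorphism because, filtering by jet order (equivalently, by the $\sI$-adic filtration), the associated graded is a direct sum of shifted copies of $E \to dR(\cJ(E))$, each a quasi-isomorphism by the above, so a spectral-sequence comparison finishes it. Combined with the identification in (2), this exhibits $j_\infty$ as the desired quasi-isomorphism $\calgd(L) \xrightarrow{\simeq} \widehat{C}^*(\op{enh}(L)^\sigma)$.

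The main obstacle, I expect, is not any of these three verifications individually but rather checking carefully that the differential we obtain on $dR(\cJ(\calgd(L)))$, after transport along $\sigma$, genuinely satisfies \emph{all} the axioms of a curved $\L8$ algebra over $\Omega^*_X$ in the sense of Definition~\ref{curvedl8} --- in particular that it is a coderivation of the standard coproduct (dually, a derivation of the completed symmetric algebra) of cohomological degree $1$, squares to zero, and obeys the curving bound (ii). Squaring to zero is inherited from $(\cJ(d_L))^2 = 0$ and flatness of the Grothendieck connection, and the derivation property is automatic once we know the differential respects the algebra structure --- but making the bookkeeping of the two differentials (the internal $\cJ(d_L)$ in degree-zero forms and the de Rham/connection piece in positive form degree) transparent, and confirming that the Taylor coefficients $\ell_n$ extracted from it have the right form, requires some care with the identifications of Lemma~\ref{lem:algebra}. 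A secondary point to handle cleanly is the dependence on the choice of $\sigma$: the theorem only asserts existence for each choice, so I would not belabor independence here, but I would remark that different choices yield isomorphic curved $\L8$ algebras, deferring the precise functorial statement (and the sense in which $\op{enh}(L)$ is ``essentially unique'') to the subsequent subsection on functoriality.
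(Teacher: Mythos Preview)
Your approach is essentially the same as the paper's: transfer the canonical differential $\nabla + J(d_L)$ on $dR(J(\calgd(L)))$ across the splitting isomorphism of Lemma~\ref{lem:algebra} (tensored up to $\Omega^\sharp_X$), observe that the resulting derivation on $\Omega^\sharp_X(\csym(T^\vee_X \oplus L^\vee[-1]))$ defines a curved $\L8$ structure on the dual, and invoke the jet-resolution statement (Proposition~\ref{prop:grconn}) for claim~(3). Your write-up is in fact more detailed than the paper's short proof---you spell out the curving check and the filtration/spectral-sequence reasoning for~(3), whereas the paper simply cites the appendix---but there is no substantive difference in strategy. (One bookkeeping slip: the generator should be $T^\vee_X \oplus L^\vee[-1]$, not its shift by $[-1]$; your parenthetical ``dual to $(T_X[-1]\oplus L)[1]$'' is correct, but the displayed $\csym$ has an extra~$[-1]$.)
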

 
\begin{remark}
The final claim asserts that $\widehat{C}^* (\op{enh}(L)^\sigma)$ is a semi-free resolution of $\calgd(L)$ as an algebra over $\Omega^*_X$. Inasmuch as we are developing an approach to derived geometry over the base ring $\Omega^*_X$, working with this algebra will provide the ``homotopically correct'' answers to questions about~$\calgd(L)$.
\end{remark}
 
\begin{remark}
The theorem generalizes the construction in \cite{GGLoop} in the case $L = 0$, as well as that of Costello in \cite{CosWG} the case of a complex foliation~$T^{0,1}_X\hookrightarrow~T_X~\otimes~\CC$. 
\end{remark} 
 
\begin{proof}[Proof of Theorem \ref{thm:construction}]
As shown by Lemma \ref{lem:algebra}, a choice of compatible splittings provides an isomorphism of $\Omega^\sharp_X$-modules
 \[
 \sigma \co \Omega^\sharp_X(\csym \left ( T_X^\vee \oplus L^\vee [-1] \right )) \to \Omega^\sharp_X(J (\Sym(L^\vee[-1]))),
\]
just by tensoring the isomorphism in the lemma over $\cinf_X$ with $\Omega^\sharp_X$. Now the right hand side has the canonical differential $\nabla_{\Sym(L^\vee[-1])} + J(d_L)$, which is the sum of the flat connection on the jet bundle $J (\Sym(L^\vee[-1]))$ and the operator $J(d_L)$ . Thus, the left hand side inherits a differential $\dd_{\op{enh}(L)}$. Compatibility of the algebra structures ensures that this transferred differential is also a derivation. Hence claim (2) amounts to interpreting this completed commutative dg algebra 
\[
(\Omega^*_X(\csym \left ( T_X^\vee \oplus L^\vee [-1] \right )),\dd_{\op{enh}(L)})
\]
as the Chevalley-Eilenberg cochains of some curved $\L8$ algebra $\op{enh}(L)$, which is immediate. Claim (1) amounts to recognizing the underlying vector bundle for $\op{enh}(L)$ as $L~\oplus~T_X[-1]$.
 
Claim (3) follows from a standard jet bundle argument. See proposition \ref{prop:grconn} or, e.g.,  Proposition 3.2 of~\cite{CDH}. 
\end{proof}
 
Examining the proof, one recognizes that the argument applies verbatim to a dg Lie algebroid. 

\begin{cor}\label{cor:dgversion}
For $\rho \co L \to T_X$ a dg Lie algebroid and a choice of compatible splitting $\sigma$, there is a curved $\L8$ algebra $\op{enh}(L)^\sigma$ over $\Omega^\ast_X$ such that
 \begin{enumerate}
 \item $\op{enh}(L)^\sigma \cong \Omega^\sharp_X (T_X[-1] \oplus L^\sharp)$ as $\Omega^\sharp_X$-modules;
 \item $\widehat{C}^* ( \op{enh}(L)^\sigma) \cong dR(J(\calgd(L)))$ as commutative $\Omega^\ast_X$-algebras; and
 \item the map sending a section to its $\infty$-jet, 
 \[
 j_\infty \co \calgd(L) \hookrightarrow dR (J(\calgd(L))) \cong \widehat{C}^* (\op{enh}(L)^\sigma),
 \]
 defines a quasi-isomorphism of  $\Omega^\ast_X$-algebras.
 \end{enumerate}
 \end{cor}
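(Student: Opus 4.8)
The plan is to observe that the proof of Theorem \ref{thm:construction} never uses that the Lie algebroid $L$ is concentrated in degree zero; every step makes sense for the $\ZZ$-graded vector bundle underlying a dg Lie algebroid, provided we keep track of the extra internal differential. So I would simply re-examine each ingredient of the non-graded proof and check that it goes through \emph{mutatis mutandis}, with the dg structure added where needed.

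First I would recall that for a dg Lie algebroid $L$, the Chevalley–Eilenberg algebra $\calgd(L)$ was already defined (at the end of Section 3) as the total complex of the double complex whose one direction is the algebroid differential $d_L$ and whose other direction is the internal differential on $\cL$ coming from the dg structure; its underlying graded algebra is still $\calsym(L^\sharp{}^\vee[-1])$. Lemma \ref{lem:algebra} is purely a statement about graded vector bundles and the symmetric monoidality of the $\infty$-jet functor (Proposition \ref{prop:jetsm}), so it applies verbatim to $L^\sharp$, giving the isomorphism of $C^\infty_X$-algebras $\calsym(T^\vee_X \oplus L^\sharp{}^\vee[-1]) \cong \cJ(\Sym(L^\sharp{}^\vee[-1]))$. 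Then, exactly as in the proof of Theorem \ref{thm:construction}, I would tensor over $\cinf_X$ with $\Omega^\sharp_X$ to get an isomorphism of $\Omega^\sharp_X$-modules $\sigma$, and transport the differential from the right-hand side to the left. The only new point is bookkeeping: the differential operator $d_L$ on $\calgd(L)$ is now itself the total differential of a double complex, but it is still a differential operator, so $\cJ(d_L)$ makes sense and squares to zero, and the right-hand side still carries the canonical flat connection $\nabla$ on the jet bundle (Proposition \ref{prop:grconn}); the transported operator $\dd_{\op{enh}(L)} = \sigma^{-1}(\nabla + \cJ(d_L))\sigma$ is a square-zero derivation of cohomological degree $1$. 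This gives claims (1) and (2): the underlying $\Omega^\sharp_X$-module is $\Omega^\sharp_X(T_X[-1] \oplus L^\sharp)$ and its completed CE algebra is $dR(J(\calgd(L)))$.

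For claim (3), the statement that $j_\infty \co \calgd(L) \hookrightarrow dR(J(\calgd(L)))$ is a quasi-isomorphism is again a standard jet-bundle fact (Proposition \ref{prop:grconn}, or Proposition 3.2 of \cite{CDH}): for any sheaf of $\cinf_X$-modules with a differential operator as differential, the jet-de Rham resolution is a quasi-isomorphism, and totalizing over the extra internal dg direction of $\calgd(L)$ does not disturb this — one can run the comparison on each column of the double complex and pass to totalizations, or simply note that the acyclicity of the de Rham complex of the flat jet bundle $(\cJ(V),\nabla)$ onto $V$ holds for any $V$, hence for the graded vector bundle $\Sym(L^\sharp{}^\vee[-1])$, and the operator $\cJ(d_L)$ is compatible with the filtration by jet order whose associated graded computes this acyclicity.

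The main obstacle, such as it is, is purely organizational rather than mathematical: one must be careful that ``differential'' now means the total differential of a double complex and verify that $\cJ(-)$, $dR(-)$, and the transport along $\sigma$ all respect this total differential and preserve square-zeroness — i.e., that the internal dg differential of $L$ and the algebroid differential $d_L$ continue to anticommute after jet-prolongation and after transporting along the (non-canonical) splitting $\sigma$. This is immediate from the fact that $\cJ$ is a functor of sheaves of dg $\cinf_X$-modules and $\sigma$ is an isomorphism of the underlying graded objects, so the transported differential is square-zero because the original one is. In short, the corollary requires no new ideas beyond reading the proof of Theorem \ref{thm:construction} with graded eyes, which is exactly the remark immediately preceding the statement.
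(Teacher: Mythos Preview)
Your proposal is correct and takes essentially the same approach as the paper: the paper's entire argument for this corollary is the single sentence ``Examining the proof, one recognizes that the argument applies verbatim to a dg Lie algebroid,'' and you have simply spelled out in detail why each step of the proof of Theorem~\ref{thm:construction} survives the passage to the graded/dg setting. If anything, your write-up is more careful than the paper's own treatment.
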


\begin{remark}
The construction works for an arbitrary dg manifold, and it amounts to providing a Fedosov resolution $F_\cA$ of the structure sheaf $\cA$ and then taking the total complex of the de Rham complex of that Fedosov resolution. (As the structure sheaf is a dg algebra, there is an internal differential in $\Omega^k(F_A)$ for every $k$.)  Our goal, as the next sections make clear, is not to provide interesting objects, but to have categorical statements. (The challenge in derived geometry is typically to have good control and understanding of the morphisms, particularly weak equivalences.) Hence, we have focused on Lie algebroids, where the categorical framework is better developed than the general theory of dg manifolds. 
\end{remark}

\subsection{A functorial statement I: the base-fixing case}\label{sect:functorial1}

In the construction above, we relied on a choice of splittings for the relevant jet bundles. Via the isomorphisms produced by the construction, we can view a change of splitting as providing an isomorphism of curved $\L8$ algebras (not just quasi-isomorphism!). Thus, in a certain sense, the splitting does not matter. We now develop a precise version of this idea.

In this section we will work over a fixed base manifold~$X$.

\begin{definition}
Let $\dglalgd^\sigma$ denote the category whose objects are pairs $(L,\sigma)$, where $L$ is a dg Lie algebroid and $\sigma = (\sigma_0,\sigma_1)$ is a compatible pair of splittings, and whose morphisms are simply maps of the underlying dg Lie algebroids (i.e., do not depend on the splittings in any way). Let $\dglalgd^\sigma_{\rm bf}(X)$ denote the category where we fix the base manifold to be $X$ and only allow base-fixing morphisms.
\end{definition}

\begin{lemma}
There is a faithful functor
\[
\begin{array}{ccccc}
\op{enh}^\sigma \co & \dglalgd^\sigma_{\rm bf}(X) &\to &\L8\text{-}\mathsf{space}_{\rm bf}(X),\\ 
& \quad (L, \sigma) &\mapsto & (X,\op{enh}(L)^\sigma),
\end{array}
\]
provided by the construction of Theorem~\ref{thm:construction}.
\end{lemma}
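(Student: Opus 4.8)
The plan is to check the two claims separately: that $\op{enh}^\sigma$ is a functor (i.e., that it respects composition and identities), and that it is faithful (i.e., injective on each hom-set). For functoriality, the key point is that the morphisms in $\dglalgd^\sigma_{\rm bf}(X)$ are just base-fixing maps of dg Lie algebroids, so I need to produce, for each such $F \co L \to L'$, a base-fixing map of $\L8$ spaces $\op{enh}^\sigma(F) \co (X, \op{enh}(L)^\sigma) \to (X, \op{enh}(L')^{\sigma'})$, and to do so in a way that is visibly compatible with composition. First I would recall that a base-fixing map of $\L8$ spaces is precisely a map of the associated Chevalley-Eilenberg cochain algebras over $\Omega^*_X$, by Koszul duality (this is how $\widehat{C}^*$ identifies curved $\L8$ algebras with their structure sheaves; the relevant statement is that a map of curved $\L8$ algebras $\fg \to \fh$ over a fixed base is the same as a map $\widehat{C}^*(\fh) \to \widehat{C}^*(\fg)$ of complete filtered commutative dg $\Omega^*_X$-algebras). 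So it suffices to produce a map $\widehat{C}^*(\op{enh}(L')^{\sigma'}) \to \widehat{C}^*(\op{enh}(L)^\sigma)$, and by Theorem~\ref{thm:construction}(2) this is the same as a map $dR(J(\calgd(L'))) \to dR(J(\calgd(L)))$ of commutative dg $\Omega^*_X$-algebras.

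Now the construction of such a map is forced and functorial. A base-fixing map of dg Lie algebroids $F \co L \to L'$ over $X$ is exactly a map of the underlying graded Lie algebroids that is a cochain map; dualizing, it induces a map of Chevalley-Eilenberg cochain complexes $\calgd(F) \co \calgd(L') \to \calgd(L)$, which is a map of commutative dg algebras over $\cinf_X$ (indeed over $\Omega^*_X$, since $F$ intertwines the anchors, $\rho' \circ \phi = \rho$ in the base-fixing case). The jet functor $J(-)$ is functorial on differential operators and sends maps of commutative dg algebras to maps of sheaves of commutative dg algebras (it is symmetric monoidal by Proposition~\ref{prop:jetsm}, and the canonical flat connection is natural by Proposition~\ref{prop:grconn}); hence applying $J$ and then $dR$ produces the desired map $dR(J(\calgd(L'))) \to dR(J(\calgd(L)))$, and transporting along the isomorphisms of Theorem~\ref{thm:construction}(2), a base-fixing map of $\L8$ spaces. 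Because each of these operations ($\calgd$, $J$, $dR$) is a genuine functor, composites go to composites and identities to identities, so $\op{enh}^\sigma$ is a functor. I would remark that the splittings $\sigma$, $\sigma'$ play no role in this assignment on morphisms—they only enter through the isomorphisms of Theorem~\ref{thm:construction}(2), which are fixed once and for all once the objects are fixed—which is exactly why $\dglalgd^\sigma_{\rm bf}(X)$ is set up with morphisms independent of the splittings.

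For faithfulness, I would run this backwards. Suppose $F, G \co L \to L'$ are two base-fixing maps of dg Lie algebroids with $\op{enh}^\sigma(F) = \op{enh}^\sigma(G)$. Then $dR(J(\calgd(F))) = dR(J(\calgd(G)))$ as maps of $\Omega^*_X$-algebras. Restricting along the inclusion $j_\infty \co \calgd(L') \hookrightarrow dR(J(\calgd(L')))$ of Theorem~\ref{thm:construction}(3)—which is natural in $L'$—and using that $j_\infty$ followed by $dR(J(\calgd(F)))$ equals $\calgd(F)$ followed by $j_\infty$ (and similarly for $G$), together with the injectivity of $j_\infty$, I conclude $\calgd(F) = \calgd(G)$ as maps $\calgd(L') \to \calgd(L)$. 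Finally, a map of dg Lie algebroids over $X$ is determined by the induced map on Chevalley-Eilenberg cochains: the degree-one part $\Gamma(X, L'^\vee) \to \calgd(L)$ recovers the transpose of $\phi \co \cL \to \cL'$ on the generating sections, and a $\cinf_X$-linear bundle map is determined by its transpose; the compatibility with anchors and brackets is automatic once the bundle map is fixed. Hence $F = G$.

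\textbf{Expected main obstacle.} The routine-but-load-bearing step is the last one in the faithfulness argument—arguing that a base-fixing map of (dg) Lie algebroids is recoverable from $\calgd(-)$ applied to it, i.e., that $\calgd$ is itself faithful on base-fixing morphisms. This is essentially a linear-algebra statement (a vector bundle map is its transpose), but one must be a little careful that no information is lost in passing from $\phi \co \cL \to \cL'$ to its action on the full exterior algebra $\calsym(L'^\vee[-1]) \to \calsym(L^\vee[-1])$, and that the differential/bracket data carried by a dg Lie algebroid morphism is not extra structure that could be forgotten (it is not—a map of dg Lie algebroids is, by definition, a map of graded Lie algebroids that happens to be a cochain map, so it is entirely determined by the underlying bundle map). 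The only genuinely nontrivial inputs are the structural facts about jet bundles (symmetric monoidality, naturality of the flat connection, and the density/injectivity of $j_\infty$) collected in the appendix; everything else is bookkeeping.
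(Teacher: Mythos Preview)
Your proposal is correct and follows essentially the same route as the paper: produce the map on $\L8$ spaces by applying $\calgd$, then $J$, then $dR$, and conjugating by the fixed splitting isomorphisms from Theorem~\ref{thm:construction}(2); functoriality then reduces to the cancellation $\sigma_{L'}\circ\sigma_{L'}^{-1}=\id$ in the middle of a composite, exactly as the paper writes out. Your faithfulness argument via injectivity of $j_\infty$ and the observation that $\calgd$ is faithful on base-fixing maps is in fact more detailed than the paper's proof, which focuses on functoriality and leaves faithfulness essentially implicit.
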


\begin{proof}
We have specified what to assign to objects, but we need to specify the rest of the functor. In other words, for each base fixing map of dg Lie algebroids $L \to L'$ over $X$, we need to provide a map $(X, \op{enh}(L)) \to (X, \op{enh}(L'))$ of $\L8$ spaces, and then we need to verify that our construction respects composition of maps and also sends an identity map to an identity map.

Let $\phi \co L \to L'$ be a base fixing map of dg Lie algebroids on $X$.  There is  a canonical map of dg $\Omega^*_X$-algebras $\phi^\vee \co \calgd(L') \to \calgd(L)$, which induces a map $dR(J(\phi^\vee)) \co dR(J(\calgd(L'))) \to dR(J(\calgd(L)))$. Using the splittings before and after this map, we obtain a map
\[
\widehat{C}^*(\op{enh}(L'))\underset{\cong}{\xto{\sigma_{L'}}} dR(J(\calgd(L'))) \xto{dR(J(\phi^\vee))} dR(J(\calgd(L))) \underset{\cong}{\xto{\sigma_L^{-1}}} \widehat{C}^*(\op{enh}(L))
\]
of filtered commutative dg $\Omega^*_X$ algebras, and hence a map of the associated $\L8$ spaces. From this explicit formula for the map of $\L8$ spaces, it is clear that the identity goes to the identity: the inner map $dR(F(J(\id^\vee)))$ is simply the identity, so that the outer maps cancel because they are given by the splitting and its inverse. 

Now let $L \xto{\phi} L' \xto{\psi} L''$ be a composition of base fixing maps of dg Lie algebroids. At the level of dg $\Omega^\ast_X$ algebras, we have
\[
dR(J((\psi \circ \phi)^\vee)) = dR(J(\phi^\vee)) \circ \sigma_{L'} \circ \sigma_{L'}^{-1} \circ dR(J(\psi^\vee)),
\]
so that $\op{enh}^\sigma (\psi \circ \phi) = \op{enh}^\sigma (\psi)~\circ~\op{enh}^\sigma (\phi)$.
\end{proof}

\begin{lemma}
The forgetful functor $\op{F} \co \dglalgd^\sigma_{\rm bf}(X) \to \dglalgd_{\rm bf}(X)$ is an equivalence of categories. 
\end{lemma}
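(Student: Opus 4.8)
The plan is to show $\op{F}$ is full, faithful, and essentially surjective. Essential surjectivity and fullness are nearly immediate from the definition of $\dglalgd^\sigma_{\rm bf}(X)$: morphisms there are by fiat exactly morphisms of the underlying dg Lie algebroids, so $\op{F}$ acts as the identity on Hom-sets, giving fullness and faithfulness for free. For essential surjectivity, I need to produce, for each dg Lie algebroid $L$ on $X$, at least one compatible pair of splittings $\sigma = (\sigma_0, \sigma_{L^\vee})$; then $(L,\sigma)$ is an object mapping to $L$. So the real content is an \emph{existence} statement for compatible splittings, which is a piece of differential geometry rather than category theory.

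First I would invoke the non-canonical isomorphisms recalled just before Lemma~\ref{lem:algebra}: every $\infty$-jet bundle $J(E)$ admits a filtered isomorphism $\sigma_E \co \csym(T^\vee_X) \ot E \to J(E)$, and in particular the jet bundle $J$ of the trivial line bundle is filtered-isomorphic as an algebra bundle to $\csym(T^\vee_X)$. These facts are proved in the appendix (the existence of such splittings is standard Gelfand--Kazhdan/Fedosov input; one chooses, e.g., an affine connection or a tubular-neighborhood-type identification and exponentiates). The one subtlety is \emph{compatibility}: I want $\sigma_0 \co \csym(T^\vee_X) \to J$ and $\sigma_{L^\vee} \co \csym(T^\vee_X)\ot L^\vee \to J(L^\vee)$ chosen so that the $\cJ$-module structure on $\cJ(L^\vee)$ matches the $\calsym(T^\vee_X)$-module structure on $\calsym(T^\vee_X)\ot \cL^\vee$ under these isomorphisms. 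This is exactly the compatible-splitting condition used in Lemma~\ref{lem:algebra} and Theorem~\ref{thm:construction}; its existence is again part of the appendix's package (one first fixes $\sigma_0$ from a choice of connection on $T_X$, then extends the same connection to $L^\vee$ and exponentiates to get a module-compatible $\sigma_{L^\vee}$).

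Assembling: given $L$, pick a torsion-free affine connection on $X$; this yields $\sigma_0$, and coupling it with any connection on $L$ yields a compatible $\sigma_{L^\vee}$, hence an object $(L,\sigma) \in \dglalgd^\sigma_{\rm bf}(X)$ with $\op{F}(L,\sigma) = L$. Combined with the automatic full faithfulness, $\op{F}$ is an equivalence. I expect the only genuine obstacle to be bookkeeping the precise meaning of ``compatible pair of splittings'' and citing the right appendix statement for their existence; there is no analytic or homotopical difficulty, since everything reduces to the classical fact that jet bundles are (non-canonically, but module-compatibly) symmetric-algebra bundles, which the paper has already placed in Appendix~\ref{app}.
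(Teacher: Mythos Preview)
Your proposal is correct and follows the same route as the paper: verify full faithfulness (immediate from the definition, since morphisms in $\dglalgd^\sigma_{\rm bf}(X)$ are declared to be morphisms of the underlying dg Lie algebroids) and essential surjectivity (existence of a compatible pair of splittings for any $L$). The paper's own proof is terser---it simply asserts both properties ``by construction'' and ``clearly,'' then cites Mac~Lane---whereas you spell out the essential surjectivity step by pointing to the appendix results on splittings induced by connections; but this is the same argument.
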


\begin{proof}
The forgetful functor is clearly essentially surjective, and it is fully faithful, by construction. Hence by Theorem 1 of IV.4 of \cite{MacLane}, it is an equivalence.
\end{proof}

Putting these lemmas together, we obtain the result we desire.

\begin{prop}\label{prop:functor}
The construction \ref{thm:construction} produces a functor $\op{enh} \co \dglalgd_{\rm bf}(X) \to \L8\text{-}\mathsf{space}_{\rm bf}(X)$ that is unique up to natural isomorphism.
\end{prop}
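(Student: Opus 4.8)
The plan is to assemble Proposition~\ref{prop:functor} from the two lemmas just proved, plus a transport-of-structure argument that upgrades the equivalence $\op{F}$ to one that interacts well with $\op{enh}^\sigma$. First I would recall that the first lemma gives a faithful functor $\op{enh}^\sigma \co \dglalgd^\sigma_{\rm bf}(X) \to \linfcatbf(X)$, and the second lemma gives an equivalence $\op{F} \co \dglalgd^\sigma_{\rm bf}(X) \to \dglalgd_{\rm bf}(X)$. Since $\op{F}$ is fully faithful and essentially surjective (in fact bijective on objects, as the underlying dg Lie algebroid is literally part of the datum $(L,\sigma)$), one may choose a section: for each dg Lie algebroid $L$ over $X$, pick a compatible pair of splittings $\sigma(L) = (\sigma_0(L),\sigma_1(L))$, which is possible because $\infty$-jet bundles always admit such splittings (the discussion preceding Lemma~\ref{lem:algebra}). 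This gives a functor $s \co \dglalgd_{\rm bf}(X) \to \dglalgd^\sigma_{\rm bf}(X)$ with $\op{F} \circ s = \id$; on morphisms $s$ is forced, since a base-fixing map of dg Lie algebroids already \emph{is} a morphism in $\dglalgd^\sigma_{\rm bf}(X)$ regardless of splittings. Then define $\op{enh} := \op{enh}^\sigma \circ s$. This is a functor $\dglalgd_{\rm bf}(X) \to \linfcatbf(X)$, and it is faithful because both $s$ and $\op{enh}^\sigma$ are.

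The second point is uniqueness up to natural isomorphism, which is really the content worth spelling out. Suppose $\sigma$ and $\tau$ are two choices of section (two systems of compatible splittings). I would show $\op{enh}^\sigma \circ s_\sigma$ and $\op{enh}^\tau \circ s_\tau$ are naturally isomorphic by exhibiting, for each $L$, an isomorphism of $\L8$ spaces $(X,\op{enh}(L)^{\sigma(L)}) \xrightarrow{\cong} (X,\op{enh}(L)^{\tau(L)})$ and checking naturality in $L$. The isomorphism itself comes straight from the proof of Theorem~\ref{thm:construction}: both $\widehat{C}^*(\op{enh}(L)^{\sigma(L)})$ and $\widehat{C}^*(\op{enh}(L)^{\tau(L)})$ are identified, via $\sigma(L)$ and $\tau(L)$ respectively, with the \emph{same} commutative dg $\Omega^*_X$-algebra $dR(J(\calgd(L)))$; hence the composite $\sigma(L)^{-1} \circ \tau(L)$ (read on completed symmetric algebras) is an isomorphism of filtered commutative dg $\Omega^*_X$-algebras, and Koszul duality turns it into a base-fixing isomorphism of $\L8$ spaces, indeed an honest isomorphism and not merely a weak equivalence, exactly as noted in the remark opening subsection~\ref{sect:functorial1}. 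Naturality in $L$ for a base-fixing map $\phi \co L \to L'$ is the square asserting that the diagram of $\L8$-space maps built from $dR(J(\phi^\vee))$ using the $\sigma$-splittings on both sides agrees, after conjugating by the isomorphisms above, with the one built using the $\tau$-splittings; but both are the \emph{same} underlying map $dR(J(\phi^\vee))$ of $dR(J(\calgd(-)))$-algebras, sandwiched by different identifications, so the square commutes tautologically. This is the same cancellation phenomenon exploited in the proof of the first lemma.

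I would then package the above so that the statement reads cleanly: any functor produced by the construction is of the form $\op{enh}^\sigma \circ s_\sigma$ for some system of splittings, and all such are naturally isomorphic, so the construction determines a functor $\op{enh} \co \dglalgd_{\rm bf}(X) \to \linfcatbf(X)$ uniquely up to natural isomorphism; faithfulness is inherited. It may also be worth remarking that the natural isomorphisms are compatible with composition (a cocycle condition over the set of splitting choices), so that $\op{enh}$ is well-defined as an object of the appropriate functor category up to a contractible space of choices, but this is not needed for the bare statement.

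The main obstacle, such as it is, is bookkeeping rather than mathematics: one must be careful that the equivalence $\op{F}$ is not just abstractly an equivalence but admits a \emph{strict} section on objects, so that $\op{enh}$ can be defined strictly (not merely pseudo-functorially) and the ``unique up to natural isomorphism'' claim is about ordinary natural transformations. Because $\dglalgd^\sigma_{\rm bf}(X) \to \dglalgd_{\rm bf}(X)$ is bijective on objects once one has committed to a choice of splittings, this is harmless; the only genuine care needed is to verify that the naturality squares for the comparison isomorphism between $\op{enh}^\sigma$ and $\op{enh}^\tau$ commute on the nose, which reduces, as above, to the observation that the inner map $dR(J(\phi^\vee))$ does not depend on any splitting.
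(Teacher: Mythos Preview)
Your proposal is correct and follows essentially the same approach as the paper: choose a section (the paper calls it an ``inverse'' $\op{G}$) of the forgetful equivalence $\op{F}$, compose with $\op{enh}^\sigma$, and observe that any two such choices yield naturally isomorphic functors. The paper dispatches the uniqueness in one line by appealing to the general fact that quasi-inverses to an equivalence are naturally isomorphic, whereas you unpack this by exhibiting the natural isomorphism concretely via the common identification with $dR(J(\calgd(L)))$; this is the same content, just made explicit. One minor slip: $\op{F}$ is not bijective on objects (distinct splittings $(L,\sigma)$ and $(L,\sigma')$ both map to $L$), only surjective---but your argument only uses surjectivity to build the section $s$, so nothing is affected.
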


\begin{proof}
Any choice of ``inverse'' $\op{G} \co \dglalgd_{\rm bf} (X) \to \dglalgd^\sigma_{\rm bf} (X)$ to $\op{F}$ produces a functor $\op{enh}^\sigma \circ \op{G} \co \dglalgd_{\rm bf} (X) \to \linfcat_{\rm bf}(X)$. Moreover, for any two choices $\op{G}$ and $\op{G'}$, there is a natural isomorphism $\op{enh}^\sigma \circ \op{G} \Rightarrow \op{enh}^\sigma \circ \op{G'}$. Hence, this functor $\op{enh}^\sigma \circ \op{G}$ is unique up to natural isomorphism.
\end{proof}

Recall that both $\dglalgd (X)$ and $\L8\text{-}\mathsf{space}_{\rm bf}(X)$ are categories with weak equivalences. The functor $\op{enh}$ is compatible with this structure; more precisely, we have the following.

\begin{prop}\label{prop:bfhpty}
The functor $\op{enh} \co \dglalgd_{\rm bf}(X) \to \L8\text{-}\mathsf{space}_{\rm bf}(X)$ preserves weak equivalences.
\end{prop}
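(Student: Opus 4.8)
The plan is to reduce the statement to the theorem on associated gradeds that already powers the other weak-equivalence arguments in the paper. Let $\phi \co L \to L'$ be a base-fixing weak equivalence of dg Lie algebroids over $X$; by definition the underlying cochain map $\cL \to \cL'$ is a quasi-isomorphism of sheaves. I want to show that $\op{enh}(\phi) \co (X,\op{enh}(L)^\sigma) \to (X,\op{enh}(L')^\sigma)$ is a weak equivalence of $\L8$ spaces, which (since the underlying map of manifolds is $\id_X$) means exactly that the induced map $\Gr(\op{enh}(\phi)_1) \co \Gr \op{enh}(L)^\sigma[1] \to \Gr \op{enh}(L')^\sigma[1]$ on associated gradeds of the $\Omega^{\geq 1}_X$-filtration is a quasi-isomorphism of cochain complexes. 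Here the associated graded of $\op{enh}(L)^\sigma$ is, by claim (1) of Theorem~\ref{thm:construction}, built on $\Omega^\sharp_X \otimes (T_X[-1] \oplus L^\sharp)$ with the $\Omega^\sharp_X$ viewed as a formal variable of weight equal to de Rham degree, so the key point is to identify the differential on this associated graded.

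First I would unwind the construction: $\widehat{C}^*(\op{enh}(L)^\sigma) \cong dR(J(\calgd(L)))$, the de Rham complex (for the canonical flat connection on jets) of the sheaf of commutative dg algebras $\cJ(\calgd(L))$. Passing to the associated graded of the $\Omega^{\geq 1}_X$-filtration kills the de Rham differential and leaves $\Omega^\sharp_X \otimes_{\cinf_X} \cJ(\calgd(L))$ with only the internal differential $\cJ(d_L)$ (the jet prolongation of the Chevalley--Eilenberg differential of $L$). Since $\phi$ is base-fixing, the map $\op{enh}(\phi)$ is, on $\widehat{C}^*$, the composite $\sigma_L^{-1} \circ dR(J(\phi^\vee)) \circ \sigma_{L'}$ built in the previous lemma; on associated gradeds it becomes $\Omega^\sharp_X \otimes \cJ(\phi^\vee) \co \Omega^\sharp_X \otimes \cJ(\calgd(L')) \to \Omega^\sharp_X \otimes \cJ(\calgd(L))$ (note the variance). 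So it suffices to show $\cJ(\phi^\vee) \co \cJ(\calgd(L')) \to \cJ(\calgd(L))$ is a quasi-isomorphism of sheaves of cochain complexes, i.e. that jet prolongation preserves the quasi-isomorphism $\phi^\vee \co \calgd(L') \to \calgd(L)$.

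Next I would argue that $\phi^\vee \co \calgd(L') \to \calgd(L)$ is itself a quasi-isomorphism. This is not formal from $\cL \to \cL'$ being a quasi-isomorphism — it needs the standard filtration argument: filter both Chevalley--Eilenberg complexes by exterior/symmetric-power degree in $L^\vee$ (the filtration $\Sym^{\geq k}$ from the excerpt's discussion before Definition~\ref{vbwe}); on associated gradeds the differential reduces to the one induced solely by the internal differential of $L$, so $\Gr(\phi^\vee)$ is (up to the anchor contribution, which does not interact with $\Sym^{\ge k}$ on the graded level) a sum of maps $\Lambda^m(\cL'^\vee) \to \Lambda^m(\cL^\vee)$, each a quasi-isomorphism because $\cL \to \cL'$ is and we are in characteristic zero over $\cinf_X$; a spectral sequence comparison then gives that $\phi^\vee$ is a quasi-isomorphism. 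Finally, apply the functor $J$: by the flatness and exactness properties of the $\infty$-jet functor recorded in the appendix (it is symmetric monoidal and sends the canonical flat connection to the canonical one, Propositions~\ref{prop:jetsm} and~\ref{prop:grconn}), and using that $J(E) \cong \csym(T^\vee_X) \otimes E$ non-canonically as filtered bundles with the quasi-isomorphism respecting the filtration, one concludes $\cJ(\phi^\vee)$ is a quasi-isomorphism. Tensoring with the flat $\cinf_X$-module $\Omega^\sharp_X$ preserves this, so $\Gr(\op{enh}(\phi)_1)$ is a quasi-isomorphism, as desired.

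The main obstacle I anticipate is the jet step: showing that prolongation $J$ sends the quasi-isomorphism $\phi^\vee$ to a quasi-isomorphism. The subtlety is that $J$ is a limit (the jet bundle is $\lim_k \Sym^{\le k}$), and limits do not generically preserve quasi-isomorphisms — one must use the compatibility of $\phi^\vee$ with the symmetric-power filtrations so that at each finite stage $\Sym^{\le k}$ one has a quasi-isomorphism of bundles of bounded-rank complexes, and then that the tower of these has surjective (indeed split, locally) transition maps, so the $\lim^1$ obstruction vanishes and the limit is again a quasi-isomorphism. Everything else is a routine assembly of filtration/spectral-sequence bookkeeping of the kind already carried out for the earlier propositions in the excerpt.
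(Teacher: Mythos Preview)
Your argument has a genuine gap at the step ``So it suffices to show $\cJ(\phi^\vee)$ is a quasi-isomorphism.'' What that would establish is that $\Gr \widehat{C}^*(\op{enh}(\phi))$ is a quasi-isomorphism, i.e.\ a filtered quasi-isomorphism of Chevalley--Eilenberg complexes. But the paper explicitly warns, in the remark following the definition of weak equivalence of curved $\L8$ algebras, that this is \emph{strictly weaker} than the required condition. You correctly state the goal --- a quasi-isomorphism on $\Gr V_{\op{enh}(L)}[1] \cong \Omega^\sharp_X \otimes (T_X \oplus L[1])$ --- and then silently replace it with a claim about the much larger object $\cJ(\calgd(L)) \cong \csym(T^\vee_X \oplus L^\vee[-1])$. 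These are not the same: one is the linear generators, the other the full completed symmetric algebra on them.

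The paper's proof sidesteps this entirely by exploiting that $\phi$ is a \emph{strict} vector bundle map, not an $\L8$ morphism. Consequently the algebra map $\widehat{C}^*(\op{enh}(\phi))$, after the splitting identifications, sends $T^\vee_X$ identically to $T^\vee_X$ and sends $L'^\vee[-1]$ into $\csym(T^\vee_X)\otimes L^\vee[-1]$ with constant term $\phi^\vee$; projecting to $\Sym^1$ yields exactly $\id_{T^\vee_X}\oplus\phi^\vee$. Dually $\op{enh}(\phi)_1 = \id_{\Omega^\sharp_X}\otimes(\id_{T_X[-1]}\oplus\phi)$, so $\Gr(\op{enh}(\phi)_1)$ is a quasi-isomorphism immediately from $\phi$ being one. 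No spectral sequence for $\calgd(L)$, no jet-preserves-quasi-isomorphisms step, and no $\lim^1$ analysis is needed. Your machinery, even if completed, would still owe the reader the extraction of the $\Sym^1$ component --- which is the one-line observation the paper uses from the outset.
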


\begin{proof}
Let $\phi \co L \to L'$ be a weak equivalence of dg Lie algebroids over $X$. Fix a splitting of the jet sequence and further choose splittings in order to obtain isomorphisms of $\Omega^\sharp$-modules
\[
\op{enh}(L) \cong \Omega^\sharp_X (T_X[-1] \oplus L), \text{ and }
\op{enh}(L') \cong \Omega^\sharp_X (T_X[-1] \oplus L').
\]
Since $\phi$ is a strict map of vector bundles, the induced map
\[
\op{enh} (\phi)_1 \co \Omega^\sharp_X (T_X[-1] \oplus L) \to \Omega^\sharp_X (T_X[-1] \oplus L')
\]
is simply given by 
\[
\op{enh} (\phi)_1 = \id_{\Omega^\sharp_X}  \otimes (\id_{T_X[-1]} \oplus \phi).
\]
Hence, $\Gr (\op{enh} (\phi)_1)$ is a quasi-isomorphism as $\phi$ itself is a quasi-isomorphism. 
\end{proof}

\subsection{A functorial statement II: the general case}

We now prove that our construction is actually functorial with respect to arbitrary maps of Lie algebroids, not just base-fixing maps.

\begin{prop}\label{fullfunctoriality}
The construction of Theorem \ref{thm:construction} defines a faithful functor
\[
\begin{array}{ccccc}
\op{enh} \co & \dglalgd  &\to &\L8\text{-}\mathsf{space}\\ 
&  (\rho \co L \to T_X) &\mapsto & (X,\op{enh}(L))
\end{array}
\]
unique up to natural isomorphism.
\end{prop}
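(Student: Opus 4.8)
The plan is to bootstrap from the base-fixing case, which is already established in Proposition \ref{prop:functor}, to the general case by handling the variation of the base manifold $X$ separately. The key observation is that a general map of dg Lie algebroids $(f,\phi)\co (L\to T_X) \to (L'\to T_Y)$ factors canonically through a base-fixing map over $X$ and a ``pure pullback'' over $f$. Concretely, given such a map, the pullback bundle $f^{-1} L'$ carries a natural dg Lie algebroid structure over $X$ (its sections over an open set being $C^\infty_X$-combinations of pulled-back sections, with anchor the composite into $T_X$ via $df$ applied to the image of $\rho'$); the data of $(f,\phi)$ is equivalent to a base-fixing map $L \to f^* L'$ of dg Lie algebroids over $X$, together with the canonical map $f^* L' \to L'$ lying over $f$. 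So I would first produce $\op{enh}$ on the ``pure pullback'' morphisms, then glue with the base-fixing functor of Proposition \ref{prop:functor}.

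For the pure-pullback piece, the essential point is that jet bundles and the Chevalley-Eilenberg construction are compatible with pullback along smooth maps. On algebras, a smooth $f\co X\to Y$ induces $f^\vee \co \calgd(L') \to \calgd(f^* L')$ of commutative dg $\Omega^*_Y$-to-$\Omega^*_X$ algebras (covering $\Omega^*_Y \to f^* \Omega^*_Y \to \Omega^*_X$), and there is a comparison map $f^* J_Y(E) \to J_X(f^{-1}E)$ for jet bundles (this is a standard fact; see the appendix), which is compatible with the canonical flat connections. Applying $dR(J(-))$ and composing with the splitting isomorphisms from Theorem \ref{thm:construction} (chosen compatibly, as in the proof of Proposition \ref{prop:functor}) yields a map of filtered commutative dg $\Omega^*$-algebras $\widehat{C}^*(\op{enh}(L')) \to f^* \widehat{C}^*(\op{enh}(f^* L'))$, hence a map $(X,\op{enh}(f^* L')) \to (Y,\op{enh}(L'))$ of $\L8$ spaces whose underlying smooth map is $f$. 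Functoriality in $f$ (identities to identities, composition to composition) follows, as in the base-fixing argument, from the fact that the inner comparison maps for jets compose strictly and the outer splitting isomorphisms cancel in pairs. To assemble the full functor I would define $\op{enh}(f,\phi)$ as the composite $(X,\op{enh}(L)) \to (X, \op{enh}(f^* L')) \to (Y, \op{enh}(L'))$, where the first arrow is $\op{enh}$ of the base-fixing map $L \to f^* L'$ and the second is the pure-pullback map just constructed; one then checks that this agrees with Proposition \ref{prop:functor} when $f=\id$, and that the factorization of a composite $(g,\psi)\circ(f,\phi)$ into base-fixing and pullback pieces is compatible with the two pieces of the construction (this is a diagram chase using the canonical isomorphism $(g\circ f)^* L'' \cong f^* g^* L''$ and naturality of the jet comparison maps). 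Faithfulness is then immediate: the underlying smooth map $f$ is recorded as the map of underlying manifolds, and a base-fixing map of dg Lie algebroids is recovered from its image under $\op{enh}$ by Proposition \ref{prop:functor}; uniqueness up to natural isomorphism follows because the only choices are the splittings, and any two choices yield naturally isomorphic functors exactly as in Proposition \ref{prop:functor}.

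The main obstacle I anticipate is verifying the coherence of the jet-bundle pullback comparison maps: one needs $f^* J_Y(E) \to J_X(f^{-1} E)$ to be compatible both with the flat connections (so that it descends to $dR$) and with the module and algebra structures (so that $dR(J(\calgd(L')))$ maps to $dR(J(\calgd(f^* L')))$ as commutative dg algebras, not merely as complexes), and one needs these comparisons to be strictly functorial in $f$ so that $\op{enh}$ respects composition on the nose rather than only up to homotopy. This is the kind of statement that is ``well known'' but requires care with the appendix's conventions on jets; I would isolate it as a lemma (parallel to Proposition \ref{prop:jetsm} and Proposition \ref{prop:grconn}) stating that $E \mapsto J(E)$, together with its flat connection and its monoidal structure, is a lax-monoidal pseudofunctor on the category of manifolds-with-vector-bundles that is strict enough for our purposes, and then the proof of Proposition \ref{fullfunctoriality} becomes a formal assembly of the base-fixing functor with this pullback compatibility.
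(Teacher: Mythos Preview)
Your factorization strategy has a genuine gap: the pullback bundle $f^{-1}L'$ does \emph{not} in general carry a dg Lie algebroid structure over $X$. A Lie algebroid on $X$ requires an anchor into $T_X$, but $f^{-1}\rho'$ lands in $f^{-1}T_Y$, and the derivative $df\co T_X \to f^{-1}T_Y$ goes the wrong way to supply one. Moreover, the bracket on $\cL'$ is not $\cinf_Y$-bilinear---it obeys the Leibniz rule involving $\rho'$---so it does not base-change along $f^{-1}\cinf_Y \to \cinf_X$ to a bracket on sections of $f^{-1}L'$. Consequently your ``base-fixing map $L \to f^*L'$ of dg Lie algebroids over $X$'' is not a morphism in $\dglalgdbf(X)$, and Proposition~\ref{prop:functor} cannot be invoked for it. (There is a pullback Lie algebroid $f^!L'$ in the literature, but its underlying bundle is the fiber product $T_X \times_{f^{-1}T_Y} f^{-1}L'$, it exists only under a transversality hypothesis, and it does not receive the map from $L$ you want.)

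The paper sidesteps this by never asking $f^{-1}K$ to be a Lie algebroid: it works directly with Chevalley--Eilenberg algebras. A morphism $(f,\varphi)\co L \to K$ is, Koszul-dually, a map of sheaves of commutative dg algebras over $f^{-1}\cinf_Y \to \cinf_X$, and the dual $\varphi^\vee\co (f^{-1}K)^\vee \to L^\vee$ induces a map $v_F$ after applying $dR_X(J_X(-))$. One precomposes $v_F$ with the jet-pullback comparison $p_F$ of Proposition~\ref{prop:pulljet}, wraps the result with the splitting isomorphisms $\sigma_K$, $\sigma_L^{-1}$, and the base-change isomorphism $b_F$, and obtains $\psi_F$ as a single explicit composite of filtered dg $\Omega^*_X$-algebra maps. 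Functoriality under composition is then a direct cancellation of the intermediate splitting isomorphisms together with the strict compatibility of $p_{(-)}$ and $v_{(-)}$ with composition. Your list of ingredients (Propositions~\ref{prop:jetsm}, \ref{prop:grconn}, \ref{prop:pulljet}) is exactly right; the fix is to deploy them on the dg algebras $\calgd(-)$ rather than to attempt a factorization inside~$\dglalgd$.
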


\begin{proof}
As noted in the proof of the base-fixing case, the functor is already defined on objects so the remaining work is to describe how the functor behaves on maps, check associativity, and so on. We will also fix a choice of splittings for each dg Lie algebroid to produce an $\L8$ space, but the same arguments as in base-fixing case ensure that the choices are essentially irrelevant.

Now let $\rho_L \co L \to T_X$ and $\rho_K \co K \to T_Y$ be Lie algebroids equipped with splittings $\sigma_L$ and $\sigma_K$ of their respective jet sequences. Let $F =(f, \varphi) \co L \to K$ be a morphism of dg Lie algeboids. We need to produce a map of $\L8$ spaces 
\[
\op{enh}\, F = (f, \psi_F) \co (X, \op{enh}(L)) \to (Y, \op{enh}\,K), 
\]
notably a map of filtered commutative dg  $\Omega^\ast_X$-algebras
\[
\psi_F \co  \widehat{C}^\ast ( f^\ast \op{enh}\,K) \to \widehat{C}^\ast ( \op{enh}(L)).
\]
Recall that the splittings induce isomorphisms
\[
\sigma_{K} \co \widehat{C}^\ast (\op{enh}\,K) \xto{\cong} dR_Y ( J_Y (\calgd(K))) \quad \text{ and } \quad \sigma_{L}^{-1} \co dR_X (J_X (\calgd(L))) \xto{\cong} \widehat{C}^\ast (\op{enh}(L)).
\]
As in the base-fixing case, the key is thus to exploit the nice behavior of the jets functor $J$ and then pre- and post-compose by these isomorphisms from the splittings.

Via base change, we have an isomorphism of $\Omega^\ast_X$-algebras
\[
b_F \co \widehat{C}^\ast ( f^\ast \op{enh}\,K) \xto{\cong} f^\ast \widehat{C}^\ast (\op{enh}\,K).
\]
By properties of the functor $J$, specifically Proposition \ref{prop:jetsm} and Proposition \ref{prop:pulljet}, we have a natural map of sheaves of commutative dg algebras
\[
f^{-1} J_Y (\calgd (K)) \to J_X (\calgd (f^{-1} K))
\]
and hence a map of $\Omega^\ast_X$-algebras
\[
p_F \co dR_X (f^{-1} J_Y (\calgd (K))) \to dR_X (J_X (\calgd (f^{-1} K))).
\]
By composition, we thus have an isomorphism
\[
p_F \circ \sigma_K \circ b_F \co \widehat{C}^\ast ( f^\ast \op{enh}\,K) \xto{\cong} dR_X (J_X (\calgd (f^{-1} K))).
\]
We now use the vector bundle map $\varphi$ in the map $F$ of Lie algebroids. It provides a map $\varphi^\vee \co (f^{-1} K)^\vee \to L^\vee$ on the dual vector bundles, and hence induces a map of $\Omega^\ast_X$-algebras
\[
v_F := dR(J(\varphi^\vee)) \co dR_X (J_X (\calgd (f^{-1} K))) \to dR_X (J_X (\calgd (L))).
\]
We define
\[
\psi_F := \sigma_L^{-1} \circ v_F \circ p_F \circ \sigma_K \circ b_F,
\]
which, aside from the splitting isomorphisms, is determined manifestly by the geometry of the situation.

It remains to verify the $\op{enh}$ respects composition of maps. Now let $G=(g,\phi)$ be a map of Lie algebroids from $\rho_K \co K \to T_Y$ to $\rho_I \co I \to T_Z$. We need to verify that $\op{enh} (G \circ F) = \op{enh} (G) \circ \op{enh} (F)$. Using our notation from above, we see that
\[
\psi_{G \circ F} := \sigma_L^{-1} \circ v_{G \circ F} \circ p_{G \circ F} \circ \sigma_I \circ b_{G \circ F}.
\]
The composition $\op{enh} (G) \circ \op{enh} (F)$ is a bit trickier to describe because one must pull back the map $\psi_G$, which is a map of sheaves on $Y$, to a map of sheaves on $X$
\[
\psi_F \circ f^*\psi_G = \sigma_L^{-1} \circ v_F \circ p_F \circ \sigma_K \circ b_F \circ b_F^{-1} \circ \sigma_K^{-1} \circ f^\ast v_G \circ f^\ast p_G \circ f^\ast \sigma_I \circ f^\ast b_G \circ b_F.
\]
Simplifying, we have
\[
\psi_F \circ f^*\psi_G = \sigma_L^{-1} \circ v_F \circ p_F \circ f^\ast v_G \circ f^\ast p_G \circ  \sigma_I \circ b_{G \circ F}.
\]
Observe next that the map
\[
v_{G \circ F} \circ p_{G \circ F} \co dR_X ((g \circ f)^{-1} J_Z (C^\ast (I))) \to dR_X (J_X (C^\ast L))
\]
can be factored as
\[
v_F \circ p_F \circ f^\ast v_G \circ f^\ast p_G \co dR_X (f^{-1} (g^{-1}(J_Z (C^\ast (I))))) \to dR_X (J_X (C^\ast L)).
\]
Hence
\[
\psi_F \circ f^*\psi_G = \sigma_L^{-1} \circ v_{G \circ F} \circ p_{G \circ F} \circ \sigma_I \circ b_{G \circ F},
\]
which agrees with $\psi_{G \circ F}$, and so we are done.
\end{proof}

As in the preceding section, $\dglalgd$ and $\L8\text{-}\mathsf{space}$ are categories with weak equivalences.  Further, since a weak equivalence of dg Lie algebroids is necessarily a diffeomorphism of the base, we can immediately piggy back off of Proposition~\ref{prop:bfhpty}.

\begin{prop}
The functor $\op{enh} \co \dglalgd \to \L8\text{-}\mathsf{space}$ preserves weak equivalences.
\end{prop}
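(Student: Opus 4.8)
The plan is to reduce the general statement to the base-fixing case, which was already handled in Proposition~\ref{prop:bfhpty}. Suppose $F = (f,\varphi)\co L \to K$ is a weak equivalence of dg Lie algebroids. By definition this means $f\co X \to Y$ is a diffeomorphism and $\varphi$ induces a quasi-isomorphism on the sheaves of sections. The first step is to observe that the underlying map of manifolds for $\op{enh}(F)$ is exactly $f$ itself, which is a diffeomorphism by hypothesis; so the only remaining thing to check is that the map of curved $\L8$ algebras $\psi_F \co \fg \to f^\ast \fh$ over $\Omega^\ast_X$ is a weak equivalence in the sense of curved $\L8$ algebras, i.e.\ that $\Gr(\psi_F)_1$ is a quasi-isomorphism on associated gradeds.

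The second step is to factor the weak equivalence $F$ through a base-fixing map using the diffeomorphism $f$. Concretely, since $f$ is a diffeomorphism, we may transport $K$ along $f^{-1}$ to obtain a dg Lie algebroid $f^\ast K$ on $X$ (with anchor the pullback of $\rho_K$), and the map $F$ factors as the ``change of base'' isomorphism $L' \coloneqq f^\ast K \to K$ covering $f$, precomposed with a base-fixing map $\bar\varphi \co L \to L'$ over $X$. The change-of-base map covering the diffeomorphism $f$ visibly induces an isomorphism (not merely a weak equivalence) on the associated $\L8$ spaces: on the algebra side it is the identification $dR_X(J_X(\calgd(f^\ast K))) \cong f^\ast dR_Y(J_Y(\calgd(K)))$ furnished by Proposition~\ref{prop:pulljet} together with base change, all of which are isomorphisms when $f$ is a diffeomorphism. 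Meanwhile $\bar\varphi \co L \to L'$ is a base-fixing map of dg Lie algebroids over $X$ which is still a quasi-isomorphism on sections (since $\varphi$ was and the transport along a diffeomorphism is exact). Then Proposition~\ref{prop:bfhpty} applies to $\bar\varphi$, showing $\op{enh}(\bar\varphi)$ is a weak equivalence, and composing with the isomorphism $\op{enh}(L') \cong f^\ast\op{enh}(K)$ gives that $\op{enh}(F)$ is a weak equivalence.

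The only subtlety—and the place where one must be slightly careful—is checking that the ``change of base along the diffeomorphism $f$'' piece really does induce an \emph{isomorphism} of curved $\L8$ algebras over $\Omega^\ast_X$, rather than just a weak equivalence; this is where one invokes that $f$ being a diffeomorphism makes $f^{-1}$, the pullback of jet bundles (Proposition~\ref{prop:pulljet}), and the extension of scalars along $f^\ast$ all invertible, so $b_F$, $p_F$, and $\sigma_K$ in the formula $\psi_F = \sigma_L^{-1}\circ v_F\circ p_F\circ\sigma_K\circ b_F$ from Proposition~\ref{fullfunctoriality} are all isomorphisms, and the remaining factor $v_F = dR(J(\bar\varphi^\vee))$ is precisely the base-fixing piece governed by Proposition~\ref{prop:bfhpty}. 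Once this decomposition is in hand the result is immediate; the main obstacle is purely bookkeeping—tracking that the pullback constructions behave as expected under a diffeomorphism—and there is no real analytic or homotopical difficulty beyond what Proposition~\ref{prop:bfhpty} already supplies.
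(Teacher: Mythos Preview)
Your proposal is correct and follows the same route as the paper: reduce to the base-fixing case of Proposition~\ref{prop:bfhpty} using that a weak equivalence of dg Lie algebroids is, by definition, a diffeomorphism on the base. The paper's argument is much terser---it simply observes this fact and says one can ``piggy back off of'' Proposition~\ref{prop:bfhpty}---whereas you have spelled out the factorization through $f^\ast K$ and checked explicitly that the change-of-base piece along a diffeomorphism is an isomorphism, which is a reasonable elaboration of the same idea.
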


\subsection{Lie algebroids and formal moduli problems}\label{sect:lieFMP}

That there exists a nice link between Lie algebroids and $\L8$ spaces is not totally surprising;
the parallels are unmistakeable from the outset. 
Indeed, derived deformation theory identifies formal moduli problems and differential graded Lie algebras. 
Initially, this paradigm was formal and algebraic: one studies formal thickenings of a point, which reduces the key issues to algebra. 
It is natural to think in a relative way, replacing the point with a derived scheme or stack $X$ and studying formal thickenings of $X$.
There has been much recent work on this problem in derived algebraic geometry, which we will highlight below. 
Loosely speaking, it identifies relative formal moduli spaces over $X$ with sheaves of (higher) Lie algebras over~$X$.

In a smooth setting, dg Lie algebroids are a natural parametrized version of dg Lie algebras and hence ought to fill one side of such a putative identification. On the other hand, in Costello's formalism, we have shown that $\L8$ spaces {\em do} present families of formal moduli problems. Hence the work of the preceding sections confirms the natural expectation that dg Lie algebroids present relative formal moduli problems.

There is some helpful terminology for discussing various flavors of relative formal moduli problems.
For instance, an $\L8$ space $(X,\fg)$ presents a stack $\bB \fg$ {\em under} $X$ and {\em over} $X_{dR}$, since by construction we have maps $X \to \bB \fg \to X_{dR}$.
These maps have certain properties, however, so we refine the terminology.
We say that $\bB \fg$ is $X$-{\it pointed} since $X$ provides the underlying ``geometric'' points (i.e., without derived fuzz) of $\bB\fg$.
We also say $\bB \fg$ is {\em linear} over $X_{dR}$, because it is described by a sheaf of $\L8$ algebras over $X_{dR}$, and hence is close to being a linear structure rather than having a more complicated fiber structure.

For context, let us outline a few of the highlights from algebraic geometry. Unless otherwise noted, $X$ will denote a scheme or stack over a field $\mathbb{K}$.
\begin{enumerate}
\item For $X$ a K\"ahler manifold, Kapranov \cite{Kap} gave the first example of a $X_{dR}$-linear $\L8$ algebra in describing the the homotopy-Lie algebra structure of $T_X[-1]$ via the Atiyah bracket.
\item Hennion \cite{Hennion} generalized Kapranov's results to the level of derived Artin stacks in characteristic zero. He showed that there was an adjunction between $X$-pointed, $X$-linear formal moduli problems and dg Lie algebras in~$\mathrm{QCoh}(X)$.
\item In the setting of smooth algebraic varieties in characteristic zero, Calaque-C{\u{a}}ld{\u{a}}raru-Tu \cite{CCT} gave an adjunction between $X$-pointed, $\mathbb{K}$-linear formal moduli problems and Lie algebroids on~$X$.
\item Gaitsgory-Rozenblyum \cite{GR} greatly generalize the notion of Lie algebroid and formal moduli problems, to a theory internal to derived stacks (and phrased in terms of $(\infty, 2)$-categories).  In this setting they show that $X$-pointed formal moduli problems are equivalent to (their notion of) Lie algebroids on $X$.  
\end{enumerate}
Note the interesting variety of pointings and bases.

Costello's introduction of $\L8$ spaces was inspired by Kapranov's work: 
he wanted to rephrase complex manifolds in Lie-theoretic terms so as to reinterpret a $\sigma$-model mapping into such a manifold as a family of gauge theories living over that manifold.
His definition is, however, close in spirit to \cite{GR}, because an $\L8$ space lives between a manifold $X$ and its de Rham space $X_{dR}$.
Because this notion does not work relative to more sophisticated stacks, however, it naturally relates to the ordinary notion of Lie algebroid and does not require their generalization.

\section{Representations up to homotopy and vector bundles on $\L8$ spaces}

\subsubsection{}

The following notion of ``module over a Lie algebroid'' is the most relevant to our work. It is introduced in \cite{AbadCrainic} and allows for the construction of an adjoint representation of a Lie algebroid. Recall that for a vector bundle $E \to X$, we use $\cE$ to denote its sheaf of smooth sections.

\begin{definition}
A {\it representation up to homotopy} of a Lie algebroid $\rho \co L \to T_X$ is a $\ZZ$-graded vector bundle $E \to X$ and a dg $\calgd(L)$-module structure on the sheaf of free $\calgd(L)^\sharp$-modules $\calgd(L)^\sharp \otimes_{\cinf_X} \cE$. We denote this dg module by $\calgd(L,E)$ and call $E$ its {\em underlying vector bundle}. 
\end{definition}

That is,  $\calgd(L)^\sharp \otimes_{\cinf_X} \cE$ is equipped with the obvious $\calgd(L)^\sharp$ action by left multiplication. To specify a representation up to homotopy, one makes a choice of differential compatible with this graded $\calgd(L)^\sharp$-module structure, namely, a differential $D$ on $\calgd(L)^\sharp \otimes_{\cinf_X} \cE$ such that
\[
D_E(x s) = (d_L x) s + (-1)^{x} x( D_Es)
\]
for any section $x \in \calgd(L)$ and section $s \in \calgd(L)^\sharp \otimes_{\cinf_X} \cE$. This concept is also known as a super-representation in the work of Mehta and Gracia-Saz~\cite{GSM}. 

\begin{definition}
A {\it map of representations up to homotopy} is a map of dg $\calgd(L)$-modules $f \co \calgd(L,E) \to \calgd(L,E')$. We denote by $\RR {\rm ep}^\infty(L)$ this category of representations up to homotopy of~$L$.
\end{definition}

After Theorem \ref{thm:repvb}, we will discuss Arias Abad-Crainic's notion of weak equivalence of representations.

\begin{lemma}[\cite{AbadCrainic} Example 4.1] 
If $\calgd(L,E) \in \repinf(L)$, then there exists a unique representation up to homotopy $\calgd(L,E^\vee)$ such that
\begin{enumerate}
\item the underlying vector bundle of $\calgd(L,E^\vee)$ is the (graded) dual of $E$, and
\item for each $s \in \calgd(L,E)$ and $s' \in \calgd(L,E^\vee)$, the differential $D_{E^\vee}$ satisfies
\[
d_L (\op{ev}(s \otimes s') ) = \op{ev}(D_{E^\vee} (s) \otimes s') + (-1)^{\lvert s \rvert} \op{ev}(s \otimes D_E (s')),
\]
where 
\[
\op{ev}_E \co \calgd(L,E) \otimes \calgd(L,E^\vee) \to \calgd(L)
\]
is the $\calgd$-linear extension of the natural fiberwise evaluation pairing between sections of $E$ and~$E^\vee$.
\end{enumerate}
\end{lemma}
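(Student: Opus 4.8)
The plan is to realize $\calgd(L,E^\vee)$ as an internal-Hom dg module and to extract uniqueness from nondegeneracy of the fiberwise pairing. Writing $M = \calgd(L,E)$, the underlying graded module $M^\sharp = \calgd(L)^\sharp \otimes_{\cinf_X} \cE$ is free of finite rank over $\calgd(L)^\sharp$ because $E$ has finite rank. I would set
\[
M^\vee := \op{Hom}_{\calgd(L)^\sharp}(M^\sharp,\calgd(L)^\sharp)
\]
and equip it with the internal-Hom differential $\delta(\phi) = d_L \circ \phi - (-1)^{|\phi|}\phi \circ D_E$. First I would record the routine facts: $\delta$ has degree $1$, $\delta^2 = 0$ (using $d_L^2 = 0$ and $D_E^2 = 0$), and $\delta$ is a derivation over $d_L$ for the left $\calgd(L)^\sharp$-module structure, so that $(M^\vee,\delta)$ is a dg $\calgd(L)$-module. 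Since $E$ is finite rank, the canonical map $\calgd(L)^\sharp \otimes_{\cinf_X} \cE^\vee \to M^\vee$ is an isomorphism of graded $\calgd(L)^\sharp$-modules, which exhibits $(M^\vee,\delta)$ as a representation up to homotopy with underlying vector bundle $E^\vee$; I would define $\calgd(L,E^\vee) := (M^\vee,\delta)$ and $D_{E^\vee} := \delta$.

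Next I would build the pairing: let $\op{ev}_E(\phi \otimes s)$ be $\pm\,\phi(s)$, with the Koszul sign forced by the grading conventions. Under the identification above this is exactly the $\calgd(L)^\sharp$-linear extension of the fiberwise pairing $\cE \otimes \cE^\vee \to \cinf_X$ required in the statement. The compatibility identity is then precisely the assertion that $\op{ev}_E$ is a cochain map with respect to $d_L$ on the target and $D_E$, $D_{E^\vee}$ on the two factors of the source, and it falls straight out of the definition of $\delta$: rearranging $\delta(\phi) = d_L\phi - (-1)^{|\phi|}\phi D_E$ gives $d_L(\phi(s)) = (\delta\phi)(s) + (-1)^{|\phi|}\phi(D_E s)$.

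For uniqueness I would work through the generators. Given any $(\cE^\vee,D')$ satisfying the stated identity, restricting it to $\xi \in \cE^\vee \subset \calgd(L,E^\vee)$ and evaluating against an arbitrary $e \in \cE \subset \calgd(L,E)$ expresses $\op{ev}_E(e \otimes D'(\xi)) \in \calgd(L)^\sharp$ purely in terms of $d_L$, $D_E$, and the fiberwise pairing. Since $E$ has finite rank, the assignment $e \mapsto \op{ev}_E(e \otimes -)$ identifies $\calgd(L)^\sharp \otimes_{\cinf_X} \cE^\vee$ with $\op{Hom}_{\cinf_X}(\cE,\calgd(L)^\sharp)$, so these values determine $D'(\xi)$ uniquely. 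As a differential compatible with the graded $\calgd(L)^\sharp$-module structure is a derivation over $d_L$, it is determined by its restriction to the generating submodule $\cE^\vee$; hence $D' = D_{E^\vee}$.

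The hard part will not be conceptual but the sign bookkeeping: one must fix mutually compatible conventions for the Koszul sign in $\op{ev}_E$, for the left-module structure on $M^\vee$, and for $\delta$, and then check that they conspire to reproduce the exact sign $(-1)^{|s|}$ displayed in the statement. I would also flag that the finite-rank hypothesis on $E$ is used essentially twice — once to identify $M^\vee$ with $\calgd(L)^\sharp \otimes_{\cinf_X} \cE^\vee$, and once for the nondegeneracy step in the uniqueness argument — so the statement genuinely needs $E$ to be a finite-rank bundle rather than an arbitrary one.
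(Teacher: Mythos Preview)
Your approach is correct and is essentially the standard argument for this fact. Note, however, that the paper does not give its own proof of this lemma: it is stated with a citation to \cite{AbadCrainic}, Example 4.1, and the paper moves on immediately. So there is no ``paper's proof'' to compare against beyond the reference; what you have written is the expected construction (internal $\op{Hom}$ into the base dg algebra, with the induced differential) together with the obvious uniqueness argument from nondegeneracy of the fiberwise pairing.

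Two minor remarks. First, as you anticipated, the only genuine work is sign bookkeeping, and you should be aware that the displayed formula in the statement appears to have the subscripts on $D_{E^\vee}$ and $D_E$ swapped relative to which factor they act on; your version $d_L(\phi(s)) = (\delta\phi)(s) + (-1)^{|\phi|}\phi(D_E s)$ is the correct one, and you should match conventions to the ordering $\op{ev}(s \otimes s')$ used in the paper rather than your $\op{ev}(\phi \otimes s)$. Second, your observation that finite rank is used twice (for the identification $\calgd(L)^\sharp \otimes_{\cinf_X} \cE^\vee \cong M^\vee$ and for nondegeneracy) is exactly right and worth keeping explicit.
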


We call this representation up to homotopy $\calgd(L,E^\vee)$ the {\it dual representation up to homotopy}.

\subsubsection{}

Recall from section \ref{l8vb} the category of vector bundles over an $\L8$ space.  In particular, we will write $\op{VB}(\op{enh}(L))$ for the category of vector bundles over the $\L8$ space corresponding to a given Lie algebroid $\rho \co L~\to~T_X$. 

\begin{theorem}\label{thm:repvb}
There is a faithful functor $\op{enh}_{\op{mod}} \co \repinf(L)~\to~\op{VB}(\op{enh}(L))$.
\end{theorem}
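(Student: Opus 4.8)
The plan is to mirror the strategy used for the functor $\op{enh}$ itself (Theorem~\ref{thm:construction} and Proposition~\ref{fullfunctoriality}), replacing the Chevalley-Eilenberg algebra of $L$ by the Chevalley-Eilenberg module $\calgd(L,E)$ of a representation up to homotopy, and then applying the jet-bundle/de~Rham machinery. Concretely, given $\calgd(L,E) \in \repinf(L)$, first recall that as a graded $\calgd(L)^\sharp$-module it is the free module $\calgd(L)^\sharp \otimes_{\cinf_X} \cE = \calsym(L^\vee[-1]) \otimes_{\cinf_X}\cE$. I would take $\infty$-jets: since the differential $D_E$ is a differential operator extending $d_L$, the pair $(\cJ(\calgd(L)^\sharp\otimes\cE), \cJ(D_E))$ is a sheaf of dg modules over the sheaf of dg algebras $\cJ(\calgd(L))$. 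Then, using the canonical flat connection on jet bundles (Proposition~\ref{prop:grconn}) on both the algebra and the module, one forms the de~Rham complex $dR(\cJ(\calgd(L,E)))$, which is a dg module over $dR(\cJ(\calgd(L))) \cong \widehat{C}^\ast(\op{enh}(L))$.

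Next I would unwind what this de~Rham-of-jets module is in $\L8$ terms. Fixing the compatible splittings $\sigma = (\sigma_0,\sigma_{L^\vee})$ already used to define $\op{enh}(L)^\sigma$, together with a splitting $\sigma_E \co \csym(T^\vee_X)\otimes E \to J(E)$ compatible with $\sigma_0$, an argument identical to Lemma~\ref{lem:algebra} gives an isomorphism of $\widehat{C}^\ast(\op{enh}(L))^\sharp$-modules
\[
\calsym(T^\vee_X \oplus L^\vee[-1]) \otimes_{\cinf_X} \cE \xrightarrow{\cong} \cJ\big(\Sym(L^\vee[-1])\otimes E\big),
\]
and tensoring with $\Omega^\sharp_X$ transports the connection-plus-$\cJ(D_E)$ differential to a differential on $\Omega^\sharp_X(\calsym(T^\vee_X\oplus L^\vee[-1]))\otimes\cE$. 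I claim this data is exactly $\widehat{C}^\ast(\op{enh}(L), \cV_E[1])$ for the graded vector bundle $\cV_E := \Omega^\sharp_X(E)$ (up to the degree shift in Definition~\ref{defn:vb}): the module differential is $\Omega^\sharp_X$-linear, it is a derivation over the differential on $\widehat{C}^\ast(\op{enh}(L))$, and the quadratic-and-higher parts in $L^\vee$ encode precisely the $\fg$-action Taylor coefficients $\ell_n$, which automatically vanish on tensors with two or more sections of $\cV_E$ because $\calgd(L,E)$ is linear in $E$. Hence $\cV_E$ is a vector bundle on $\op{enh}(L)$ in the sense of Definition~\ref{defn:vb}; this defines $\op{enh}_{\op{mod}}$ on objects.

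For morphisms: a map $f \co \calgd(L,E) \to \calgd(L,E')$ of dg $\calgd(L)$-modules is in particular a map of dg $\calgd(L)^\sharp$-modules, hence determined by an $\cinf_X$-linear bundle map together with corrections; applying $\cJ(-)$ and then $dR(-)$, and conjugating by the splitting isomorphisms $\sigma_E$ and $\sigma_{E'}$ exactly as in Proposition~\ref{fullfunctoriality}, yields a map $\widehat{C}^\ast(\op{enh}(L),\cV_E[1]) \to \widehat{C}^\ast(\op{enh}(L),\cV_{E'}[1])$ of dg $\widehat{C}^\ast(\op{enh}(L))$-modules, i.e. a map of vector bundles. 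Functoriality (identities to identities, and respect for composition) follows by the same splitting-cancellation bookkeeping as in the proofs of the lemmas in Section~\ref{sect:functorial1}, since the jet functor is symmetric monoidal and the de~Rham construction is functorial. Faithfulness is immediate: by construction $\op{enh}_{\op{mod}}(f)$ is obtained from $f$ by applying the faithful functor $\cJ(-)$ (jets of a differential operator remember the operator, cf.\ Appendix~\ref{app}) followed by $dR(-)$ and then isomorphisms, so $f$ can be recovered by restricting to $0$-jets, whence $\op{enh}_{\op{mod}}(f) = \op{enh}_{\op{mod}}(f')$ forces $f = f'$.

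\textbf{Main obstacle.} I expect the genuinely delicate step to be the identification in the second paragraph: matching the transported differential on $\Omega^\sharp_X(\calsym(T^\vee_X\oplus L^\vee[-1]))\otimes\cE$ with the Chevalley-Eilenberg module differential of an honest $\fg$-module structure on $\op{enh}(L)$, and in particular checking conditions (1) and (2) of Definition~\ref{defn:vb} (the inclusion/projection being $\L8$-maps and the vanishing of $\ell_n$ on two or more module sections). The first two paragraphs' constructions are "geometry", but verifying that the result fits the Beck-module definition of a vector bundle on an $\L8$ space---rather than some more general twisted object---requires carefully tracking how $D_E$ decomposes under the bigrading by symmetric powers of $L^\vee$ and by de~Rham form-degree, and confirming linearity in $\cE$ survives the jet-and-de~Rham process. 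Everything else is a routine transcription of the arguments already given for $\op{enh}$.
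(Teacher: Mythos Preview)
Your proposal is essentially the paper's approach: apply $\cJ(-)$ and then $dR(-)$ to $\calgd(L,E)$, use compatible splittings to identify the underlying graded module with $\Omega^\sharp_X(\csym(T^\vee_X\oplus L^\vee[-1]))\otimes\cE$, and transfer the differential; morphisms and faithfulness are handled exactly as you say, and the paper's justification of faithfulness is the same one-line observation that taking jets is injective.

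The one genuine divergence concerns precisely the step you flag as the main obstacle. Rather than verify conditions (1) and (2) of Definition~\ref{defn:vb} directly by tracking how the transported differential decomposes, the paper sidesteps the issue by invoking the \emph{dual} representation up to homotopy $\calgd(L,E^\vee)$. Running the same jets-plus-de~Rham construction on $E^\vee$ produces a dg $\widehat{C}^*(\op{enh}(L))$-module $\widehat{C}^*(\op{enh}(L),\widetilde{E}^\vee)$, and one then \emph{defines} the total space via its function algebra
\[
\widehat{C}^*(\op{enh}(L)\ltimes\widetilde{E}) \;:=\; \csym_{\widehat{C}^*(\op{enh}(L))}\,\widehat{C}^*(\op{enh}(L),\widetilde{E}^\vee).
\]
Because this is manifestly a completed symmetric algebra on a module over $\widehat{C}^*(\op{enh}(L))$, the resulting curved $\L8$ structure on $\op{enh}(L)\oplus\widetilde{E}$ automatically has the semidirect-product form: the projection to and inclusion from $\op{enh}(L)$ are $\L8$ maps, and the brackets vanish on two or more $\widetilde{E}$-inputs, simply because a derivation of a symmetric algebra over a base is determined by where it sends the base and the linear generators. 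So your ``delicate step'' dissolves once you pass to the dual; the paper never has to inspect the bigrading of $D_E$ at all. Your direct verification would also work, but the dual-representation trick is cleaner and is worth knowing.
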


\begin{proof}
Just as in the construction of $\op{enh}$, we will fix connections on the underlying vector bundles to make an explicit construction, but these choices are irrelevant up to isomorphism, by an argument identical to that given in the construction of $\op{enh}$. Thus, fix a connection on $L$ once and for all. The $\L8$ space $\op{enh} (L)$ is given by a pair~$(X,~\op{enh}(L))$. 

Let $\calgd(L,E)$ denote an $L$-representation up to homotopy with underlying graded vector bundle $E$ and let $\calgd(L,E^\vee)$ denote the dual representation with underlying bundle $E^\vee$. Fix a connection on $E$, which induces a connection on $E^\vee$ and also on $\Sym(L^\vee[-1]) \otimes E$.  By using the connections, we obtain an isomorphism of vector bundles
\[
J(\Sym(L^\vee[-1]) \otimes E)) \cong \csym(T^\vee_X \oplus L^\vee[-1]) \otimes E,
\]
as in the proof of Lemma \ref{lem:algebra}. This isomorphism induces an isomorphism of $\cinf_X$-modules
\[
dR_X(J(\calgd(L,E))^\sharp \cong \Omega^\sharp_X(\csym(T^\vee_X \oplus L^\vee[-1]) \otimes E),
\]
and  along this isomorphism we transfer the differential on $dR_X(J(\calgd(L,E))$ to a differential on the right hand side, making it a dg $\widehat{C}^* (\op{enh}(L))$-module. Let us denote this module by $\widehat{C}^* (\op{enh}(L), \widetilde{E})$.
We very nearly have a vector bundle over $\op{enh}(L)$: it remains to verify that this dg module is the sheaf of sections of some vector bundle. 

But this is simple. The dual representation up to homotopy  $E^\vee$ produces a dg $\widehat{C}^* (\op{enh}(L))$-module that we will denote $\widehat{C}^* (\op{enh}(L), \widetilde{E}^\vee)$. This module canonically provides the ``linear functions'' among the ``ring of functions'' on the total space~$\op{enh}(L\rtimes\widetilde{E})$:
\[
\widehat{C}^*(\op{enh}(L \rtimes \widetilde{E})) := \csym_{\widehat{C}^* (\op{enh}(L))} \widehat{C}^* (\op{enh}(L, \widetilde{E}^\vee)).
\] 
Hence our construction produces a vector bundle on $\op{enh}(L)$ whose underlying vector bundle on $X$ is just $\widetilde{E}~=~\Omega^\sharp_X(\Sym(T^\vee_X[-1])~\otimes~E )$.

A map of representations up to homotopy $f \co \calgd(L,E) \to \calgd(L,F)$ induces a $\widehat{C}^* (\op{enh}(L)^\sigma)$-linear map of sections $\op{enh}_{\op{mod}} (f) \co dR_X (J(\calgd(L,E))) \to dR_X(J(\calgd(L,F)))$.  Moreover, this association is faithful, since taking jets is injective.
\end{proof}

\begin{remark}\label{lalgdbeck}
The definition of a representation up to homotopy can seem less-than-obvious on first exposure, especially when formulated as an infinite sequence of higher homotopies. It is, however, essentially a module over the Lie algebroid, but viewed as an $\L8$ algebra. Equivalently, it is essentially a Beck module for the Lie algebroid. This perspective makes clear why we should have such a nice functor $\op{enh}_{\op{mod}}$: we simply apply $\op{enh}$ to the overcategory $\dglalgdbf(X)_{/L}$, which maps to the overcategory $\linfcatbf(X)_{/\op{enh}(L)}$. Compare to Remark~\ref{l8beck}.
\end{remark}

\begin{remark}
The preceding theorem is the analogue of the central result of \cite{Mehta}, where Mehta proves an equivalence between representations up to homotopy and Va{\u\i}ntrob's Lie algebroid modules.
\end{remark}

Both categories possess natural symmetric monoidal structures. In the case of representations up to homotopy, we use $-\otimes_{\calgd(L)}-$. For vector bundles on an $\L8$-space $(X,\fg)$, we essentially tensor as representations of the curved $\L8$ algebra $\fg$. In detail, if $\cV$ and $\cW$ are vector bundles, we tensor the  the underlying sheaves of sections over $\Omega^\sharp_X$ and then extend the action of $\fg$ in the standard way, i.e., a section $x$ of $\fg$ acts by $x \otimes \op{id}_{\cW} + \op{id}_{\cV} \otimes x$. The construction in the preceding proof manifestly intertwines these tensor products, and so we have the following.

\begin{lemma}
The functor $\op{enh}_{\op{mod}}$ is symmetric monoidal.
\end{lemma}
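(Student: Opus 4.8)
The plan is to unwind both symmetric monoidal structures in terms of the explicit models provided by the jet-bundle construction, and then observe that the isomorphisms of Theorem~\ref{thm:construction} and of the proof of Theorem~\ref{thm:repvb} are compatible with taking symmetric powers. First I would recall that for representations up to homotopy $\calgd(L,E)$ and $\calgd(L,F)$ of a Lie algebroid $L$, the tensor product $\calgd(L,E)\otimes_{\calgd(L)}\calgd(L,F)$ has underlying graded bundle $E\otimes_{\cinf_X}F$ and differential determined by the Leibniz rule for the diagonal $\calgd(L)$-action. Applying the functor $\cJ(-)$, which is symmetric monoidal by Proposition~\ref{prop:jetsm}, and then $dR_X(-)$, which is also monoidal since the canonical flat connection on a tensor product of jet bundles is the tensor-product connection, we get that $dR_X(\cJ(\calgd(L,E\otimes F)))$ is the tensor product over $dR_X(\cJ(\calgd(L)))=\widehat{C}^*(\op{enh}(L))$ of $dR_X(\cJ(\calgd(L,E)))$ and $dR_X(\cJ(\calgd(L,F)))$. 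That is precisely the statement that $\op{enh}_{\op{mod}}(\calgd(L,E))\otimes_{\widehat{C}^*(\op{enh}(L))}\op{enh}_{\op{mod}}(\calgd(L,F))$ computes $\op{enh}_{\op{mod}}(\calgd(L,E)\otimes_{\calgd(L)}\calgd(L,F))$.

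Next I would check this matches the definition of the symmetric monoidal structure on $\VB(\op{enh}(L))$ given in the paragraph before the lemma: there, for vector bundles $\cV,\cW$ on the $\L8$ space, one tensors the sheaves of $\Omega^\sharp_X$-sections and lets a section $x$ of the curved $\L8$ algebra act by $x\otimes\op{id}_\cW+\op{id}_\cV\otimes x$. Translating this into the language of $\widehat{C}^*$-modules, tensoring the underlying bundles $\widetilde E$ and $\widetilde F$ over $\Omega^\sharp_X$ with the derivation-type extension of the $\op{enh}(L)$-action is exactly the Chevalley--Eilenberg description of the diagonal module, i.e. $\widehat{C}^*(\op{enh}(L),\widetilde E)\otimes_{\widehat{C}^*(\op{enh}(L))}\widehat{C}^*(\op{enh}(L),\widetilde F)$. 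So the content is: under the isomorphism of $\cinf_X$-modules (from the chosen connections) identifying $dR_X(\cJ(\calgd(L,E)))^\sharp$ with $\Omega^\sharp_X(\csym(T^\vee_X\oplus L^\vee[-1])\otimes E)$, the transferred differential on a tensor product agrees with the diagonal-action differential; this follows because the connection chosen on $E\otimes F$ is the tensor-product connection (so the jet-level identification is monoidal for modules, not just algebras) and because $\cJ(d_L)$ acts on the $\calgd(L)$-factor by a derivation.

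The main obstacle I expect is not conceptual but bookkeeping: one must confirm that \emph{all} the structure maps in the construction of $\op{enh}_{\op{mod}}$ --- the jet symmetric-monoidality isomorphism of Proposition~\ref{prop:jetsm}, the flat connection $\nabla$ on jet bundles, the identifications $\sigma$ coming from the splittings, and the passage $E\mapsto\widetilde E=\Omega^\sharp_X(\Sym(T^\vee_X[-1])\otimes E)$ --- are simultaneously compatible with the two tensor products. Concretely, the only real point to verify is that one may choose the auxiliary connection on $\Sym(L^\vee[-1])\otimes E\otimes F$ to be induced from the chosen connections on $\Sym(L^\vee[-1])$, $E$, and $F$ in the obvious way, which is possible since a choice of connection on each factor determines one on any tensor product; with that choice the isomorphism $dR_X(\cJ(-))^\sharp\cong\Omega^\sharp_X(\csym(T^\vee_X\oplus L^\vee[-1])\otimes-)$ is visibly monoidal in the bundle variable. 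Once this coherence of choices is in place, the intertwining of the two monoidal products ``manifestly'' holds, exactly as asserted in the statement, so the proof reduces to recording these compatibilities.
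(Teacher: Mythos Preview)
Your proposal is correct and follows the same approach as the paper, only with far more detail: the paper's entire argument is the single sentence ``The construction in the preceding proof manifestly intertwines these tensor products,'' and what you have written is precisely the unpacking of why that intertwining holds (monoidality of $\cJ(-)$ from Proposition~\ref{prop:jetsm}, monoidality of $dR_X(-)$, and the coherence of the splitting/connection choices on tensor products). There is no alternative route being taken here; you have simply supplied the bookkeeping the paper leaves implicit.
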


\subsubsection{}

These categories both possess notions of weak equivalence, and we will show that $\op{enh}_{\op{mod}}$ is a homotopy functor (i.e., respects weak equivalences). From section 4.2 of \cite{AbadCrainic}, we recall Arias Abad-Crainic's notion, which relies on the natural filtration 
\[
F^k \calgd(L,E) = \Gamma(X, \Lambda^{\geq k} L^\vee \otimes E)
\]
on $\calgd(L,E)$. This filtration is the Lie algebroid analog of the Hodge filtration $F^k \Omega^*_X = \Omega^{\geq k}_X$ on a de Rham complex.

\begin{definition}[Definition 4.9 of  \cite{AbadCrainic}]
A map $f \co \calgd(L,E) \to \calgd(L,E')$ of representations up to homotopy is a {\it weak equivalence} if the induced map 
\[
\tilde{f} \co \calgd(L,E)/F^1 \calgd(L,E) \to \calgd(L,E')/F^1 \calgd(L,E')
\]
is a quasi-isomorphism.
\end{definition}

\begin{remark}
Let $\cE$ denote the sheaf of smooth sections of the graded vector $E$. Note that $\calgd(L,E)/F^1 \calgd(L,E)$ is simply $\cE$ equipped with a $\cinf$-linear differential; from hereon we implicitly view $\cE$ as this {\em dg} vector bundle.
Further, we have an isomorphism
\[
\Gr \calgd(L,E) \cong (\Gr \calgd(L)) \otimes_{\cinf} \calgd(L,E)/ F^1 \calgd(L,E),
\]
so we see that $\Gr f$ is simply $\id_{\Gr \calgd(L)} \otimes \tilde{f}$. Hence, $\Gr f$ is a quasi-isomorphism if and only if $\tilde{f}$ is a quasi-isomorphism.
That is, a weak equivalence is simply a filtered quasi-isomorphism.
\end{remark}

\begin{prop}
The functor $\op{enh}_{\op{mod}}$ preserves weak equivalences. 
Thus, it induces a functor at the level of derived ({\it aka} homotopy) categories.
\end{prop}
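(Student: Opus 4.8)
The plan is to reduce the statement, through the explicit model of $\op{enh}_{\op{mod}}$ constructed for Theorem~\ref{thm:repvb}, to the elementary fact that $\Omega^\sharp_X$ is a finite-rank locally free $\cinf_X$-module, so that tensoring a quasi-isomorphism of complexes of vector bundles on $X$ with it again yields a quasi-isomorphism. So let $f \co \calgd(L,E) \to \calgd(L,E')$ be a weak equivalence of representations up to homotopy; by the Remark recalled just above, this means exactly that the induced $\cinf_X$-linear map $\tilde{f}\co \cE \to \cE'$ between the Hodge quotients---viewed as dg vector bundles $(\cE,\bar{D}_E)$ and $(\cE',\bar{D}_{E'})$ on $X$---is a quasi-isomorphism. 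By Definition~\ref{vbwe}, it suffices to prove that $\op{enh}_{\op{mod}}(f)_\fib$ is a quasi-isomorphism.

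For this I would compute $\op{enh}_{\op{mod}}(f)_\fib$ outright. With the connections fixed in the proof of Theorem~\ref{thm:repvb}, $\op{enh}_{\op{mod}}(\calgd(L,E))$ is the $\widehat{C}^*(\op{enh}(L))$-module $dR_X(J(\calgd(L,E)))$, with underlying graded $\Omega^\sharp_X(\csym(T_X^\vee \oplus L^\vee[-1]) \otimes E)$, and $\op{enh}_{\op{mod}}(f) = dR_X(J(f))$. The fiber in the sense of Definition~\ref{vbwe} is formed in two stages: first pass to the associated graded $\Gr$ for the nilpotent-ideal filtration $\sI = \Omega^{\geq 1}_X$, then take the quotient by the first step of the filtration by symmetric powers of $(\Gr\op{enh}(L))^\vee[-1]$. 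For the first stage the key point---the whole mechanism of jet resolutions, see Proposition~\ref{prop:grconn}---is that the canonical flat connection on an $\infty$-jet bundle strictly raises the $\Omega^{\geq\bullet}_X$-filtration; hence on $\Gr$ only the $\Omega^\sharp_X$-linear part of the differential survives, giving $\Gr dR_X(J(\calgd(L,E))) \cong \Omega^\sharp_X \otimes_{\cinf_X} J(\calgd(L,E))$ as a module over $\Gr\widehat{C}^*(\op{enh}(L)) \cong \Omega^\sharp_X \otimes_{\cinf_X} J(\calgd(L))$. For the second stage one identifies the symmetric-power filtration on this module over the non-curved $\L8$ algebra $\Gr\op{enh}(L)$, under the splitting $\sigma$, with the decreasing filtration of $\csym(T_X^\vee)\otimes\Lambda^\bullet L^\vee \otimes E$ by total weight, where the weight of a term is the number of $T_X^\vee$-jet factors plus its exterior degree in $L^\vee$. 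The differential $J(D_E)$ preserves this filtration, since the differential $D_E$ of a representation up to homotopy has order at most one along $X$ and its order-one part raises exterior degree by one, so the net change in weight is zero. The quotient by the part of weight $\geq 1$ therefore has underlying graded $\Omega^\sharp_X \otimes_{\cinf_X} \cE$ (up to the shift in Definition~\ref{defn:vb}), and the only component of $J(D_E)$ acting on weight zero is the order-zero, exterior-degree-preserving part of $D_E$, which is precisely $\bar{D}_E$. Carrying the same identifications along the chain map $dR_X(J(f))$ gives $\op{enh}_{\op{mod}}(f)_\fib = \id_{\Omega^\sharp_X} \otimes_{\cinf_X} \tilde{f}$.

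Granting this, the first claim follows at once: $\tilde{f}$ is a quasi-isomorphism by hypothesis, $\Omega^\sharp_X$ is finite-rank locally free over $\cinf_X$, so $\id_{\Omega^\sharp_X} \otimes_{\cinf_X} \tilde{f}$ is a quasi-isomorphism, i.e.\ $\op{enh}_{\op{mod}}(f)$ is a weak equivalence of vector bundles. For the second claim, a functor between categories with weak equivalences that preserves weak equivalences descends to the localizations: the composite of $\op{enh}_{\op{mod}}$ with the localization $\VB(\op{enh}(L)) \to \op{Ho}(\VB(\op{enh}(L)))$ sends weak equivalences of representations up to homotopy to isomorphisms and hence factors uniquely through $\op{Ho}(\repinf(L))$, yielding the induced functor on homotopy categories.

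I expect the main obstacle to be the second stage of the fiber computation: verifying that the abstract symmetric-power filtration attached to the non-curved $\L8$ algebra $\Gr\op{enh}(L)$ matches, under the chosen splitting $\sigma$, the concrete ``jet-order plus $L^\vee$-exterior-degree'' filtration, and that forming $\Gr$ and then the weight-zero quotient commutes with the functors $dR$ and $J$ as asserted. The remaining points are bookkeeping with the conventions of Section~\ref{l8vb} and with the properties of $J$ from the appendix; it is worth isolating the small lemma that $J$ carries quasi-isomorphisms of complexes of vector bundles to quasi-isomorphisms (by exactness of $J$ on short exact sequences and passage to the mapping cone), even though in the route above it enters only implicitly, through the explicit formula for $\op{enh}_{\op{mod}}(f)_\fib$.
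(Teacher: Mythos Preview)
Your proof is correct and takes the same approach as the paper: compute $(\op{enh}_{\op{mod}}(f))_\fib$ explicitly as the identity on a locally free $\cinf_X$-module tensored with $\tilde f$, hence a quasi-isomorphism. You are more explicit than the paper about the two-stage passage through $\Gr$ for the nilpotent ideal and then the symmetric-power quotient---the paper just asserts ``by construction'' the final formula $\id_{\Omega^\sharp_X}\otimes\id_{\Sym(\cT^\vee_X[-1])}\otimes\tilde f$, which carries an extra factor reflecting its stated convention for $\widetilde{E}$ in the proof of Theorem~\ref{thm:repvb}---but the argument is the same.
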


\begin{proof}
Let $f \co \calgd(L,E) \to \calgd(L,E')$ be a weak equivalence of representations up to homotopy. By definition, $\tilde{f} \co \cE \to \cE'$ is a quasi-isomorphism. We need to show that the induced map of vector bundles on $\op{enh} \; L$ is a quasi-isomorphism.  Recall---see the proof of Theorem \ref{thm:repvb}---that $\op{enh}_{\op{mod}} (C^\ast (L,E))$ has underlying graded vector bundle $\widetilde{E} := \Omega^\sharp_X (\Sym(T^\vee_X[-1]) \otimes E )$ on the manifold $X$, and similarly for $C^\ast (L,E')$. Now the map of vector bundles 
\[
\op{enh}_{\op{mod}} (f) \co \op{enh}_{\op{mod}} (C^\ast(L,E)) \to  \op{enh}_{\op{mod}} (C^\ast(L,E'))
 \]
induces a map of sheaves of sections of dg vector bundles
\[
\left (\op{enh}_{\op{mod}} (f) \right )_\fib \co \widetilde{\cE} \to \widetilde{\cE'}
\]
on $X$, where $\widetilde{\cE}$ and $\widetilde{\cE'}$ are equipped with the $\cinf$-linear differentials described just before Definition \ref{vbwe}.
We need to show that this map is a quasi-isomorphism but by construction
\[
\left (\op{enh}_{\op{mod}} (f) \right )_\fib = \id_{\Omega^\sharp_X} \otimes \id_{\Sym (\cT^\vee_X[-1])} \otimes \tilde{f},
\]
so the proposition follows. 
\end{proof}

\subsection{The adjoint complex(es) and the deformation complex}

In the setting of Lie algebroids, there are two cochain complexes that play the role of the ``tangent complex'' of a Lie algebroid. On the one hand, there is the {\it deformation complex} of Crainic-Moerdijk \cite{CrainicMoerdijk}; and on the other, there is the {\it adjoint complex} of Arias Abad-Crainic \cite{AbadCrainic}. In the end, these constructions are isomorphic (after a shift in degree), but their definitions have rather different flavors. The deformation complex has a more intrinsic flavor---it is an obvious variant of the Hochschild complex for associative algebras and the deformation complex of Lie algebras---and it plays the starring role in our constructions below, so we focus on it here.

Our main goal in this section is to show that $\op{enh}_{\op{mod}}$ sends the deformation complex of a Lie algebroid $L$ (equivalently, its shifted adjoint complex) to the tangent bundle of the $\L8$ space~$\op{enh}(L)$.

\subsubsection{The deformation complex}

Let $E \to X$ be a graded vector bundle. 
Recall that a {\it derivation} of its sections $\cE(X)$ is an $\RR$-linear endomorphism $D$ such that there is a vector field $\sigma_D$ with the property that
\[
D(fe) = \sigma_D(f) e + f D(e)
\]
for every section $e$ and every smooth function $f$.
Analogously, an {\it $n$-multiderivation} of its sections $\cE(X)$ is a graded-antisymmetric, $\RR$-multilinear map
\[
D \co \underbrace{\cE(X) \otimes \cdots \otimes \cE(X)}_{n+1 \text{ times}} \to \cE(X)
\]
that is a derivation in each entry separately. (Note the potential for terminological confusion here: the map $D$ has some cohomological degree and it has ``degree $n$'' as a multiderivation, which just depends on the number of inputs.) Thus, there is a {\it symbol map} $\sigma_D \co \cE(X)^{\otimes n} \to \cT_X(X)$ defined by
\[
D(e_0,\ldots,e_{n-1},fe_n) = f D(e_0,\ldots,e_n) + \sigma_D(e_1,\ldots,e_{n-1})(f)e_n,
\]
for any smooth function $f \in \cinf(X)$ and sections $e_0,\ldots,e_n \in \cE(X)$. When the rank of $E$ is greater than one, every symbol map $\sigma_D$ is automatically $\cinf$-linear. We restrict to multiderivations with $\cinf$-linear symbols when $\op{rk}(E) = 1$. Observe that $0$-derivations are simply derivations of $\cE(X)$ and that $-1$-derivations are simply sections of~$\cE(X)$.
(We attempt to motivate this definition of multiderivation in Section \ref{sec:whymultider} below.)

The first result of Crainic-Moerdijk identifies this algebraic construction with a geometric object.

\begin{lemma}[Lemma 1 of \cite{CrainicMoerdijk}]
The graded vector space $\cD er^n(E)$ of $n$-multiderivations is equal to the sections of a graded vector bundle $\op{Der}^n(E)$. This vector bundle sits in a short exact sequence
\[
\Lambda^{n+1} E^\vee \otimes E \to \op{Der}^n(E) \to \Lambda^n E^\vee \otimes T_X.
\]
A choice of connection on $E$ induces a splitting of this exact sequence.
\end{lemma}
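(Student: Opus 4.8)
The plan is to prove the three assertions in turn—identifying $\cD er^n(E)$ with the sections of a bundle, establishing the short exact sequence, and producing the splitting from a connection—by reducing everything to a local computation and patching. First I would check that the claim is local on $X$: multiderivations restrict to open subsets (a derivation in each slot is determined by its germs, since the defining Leibniz rule is local), so it suffices to treat a trivializing open $U$ over which $E|_U \cong U \times \RR^r$. On such a $U$, I would unwind the definition: an $n$-multiderivation $D$ is graded-antisymmetric and $\RR$-multilinear, and the symbol identity $D(e_0,\ldots,e_{n-1},fe_n) = fD(e_0,\ldots,e_n) + \sigma_D(e_1,\ldots,e_{n-1})(f)e_n$ together with antisymmetry forces $D$ to be a first-order differential operator in each entry. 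Expanding in a local frame $\{\epsilon_\alpha\}$ of $E$, one sees $D$ is determined by (i) the symbol $\sigma_D$, a $\cinf$-multilinear map $\cE^{\otimes n} \to \cT_X$, equivalently a section of $\Lambda^n E^\vee \otimes T_X$ over $U$ (using $\cinf$-linearity of the symbol, which the statement guarantees, automatically when $\op{rk} E > 1$), and (ii) the ``curvature-type'' component recording the failure to be purely symbolic, which is a $\cinf$-multilinear antisymmetric map $\cE^{\otimes(n+1)} \to \cE$, i.e.\ a section of $\Lambda^{n+1} E^\vee \otimes E$. This gives, locally, a direct sum decomposition, and in particular identifies $\cD er^n(E)|_U$ with the sections of a vector bundle on $U$. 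The essential point is that the gluing data for these local identifications is smooth and compatible, which follows because the construction is natural in the local frame; hence the bundle $\op{Der}^n(E)$ is globally well defined and $\cD er^n(E) = \Gamma(X, \op{Der}^n(E))$.

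For the short exact sequence, I would define the second map $\op{Der}^n(E) \to \Lambda^n E^\vee \otimes T_X$ intrinsically as the symbol map $D \mapsto \sigma_D$; it is well defined and $\cinf$-linear (the symbol only depends on $D$, not on choices, by its defining property), and surjective because any section of $\Lambda^n E^\vee \otimes T_X$ can be realized as a symbol after choosing a connection—or, more cleanly, because surjectivity can be checked locally, where it is the explicit decomposition above. The kernel consists of multiderivations with vanishing symbol, i.e.\ maps $\cE^{\otimes(n+1)} \to \cE$ that are $\cinf$-linear in the last (hence, by antisymmetry, every) slot; these are precisely sections of $\Lambda^{n+1} E^\vee \otimes E$, giving the first map. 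Exactness in the middle is then immediate from the local description. Finally, given a connection $\nabla$ on $E$, I would write down the splitting explicitly: send a section $S$ of $\Lambda^n E^\vee \otimes T_X$—viewed as a $\cinf$-multilinear map $\cE^{\otimes n} \to \cT_X$—to the multiderivation
\[
D_\nabla(e_0,\ldots,e_n) := S(e_0,\ldots,e_{n-1})(\text{apply via } \nabla)\, e_n \;-\; (\text{graded antisymmetrization over the slots}).
\]
More precisely, $D_\nabla$ sends $(e_0,\ldots,e_n)$ to the graded-antisymmetrization of $\nabla_{S(e_1,\ldots,e_n)} e_0$; one checks this is $\RR$-multilinear, graded-antisymmetric, a derivation in each entry (because $\nabla$ satisfies the Leibniz rule), and has symbol $S$. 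Since the composite $\op{Der}^n(E) \to \Lambda^n E^\vee \otimes T_X \xrightarrow{D_\nabla} \op{Der}^n(E)$ need not be the identity on the nose, I would instead observe that $S \mapsto D_\nabla(S)$ is a section of the symbol map, which is exactly a splitting.

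The main obstacle I anticipate is bookkeeping the signs and the antisymmetrization in the connection-splitting formula so that $D_\nabla(S)$ is genuinely graded-antisymmetric and is a derivation in \emph{each} slot simultaneously (not just the last), while still recovering $S$ as its symbol; the naive formula is a derivation in the last entry but antisymmetrizing can break the derivation property in the others unless one is careful. A secondary, more routine obstacle is the $\op{rk}(E) = 1$ case, where the symbol of a multiderivation is not automatically $\cinf$-linear—this is exactly why the statement (following Crainic--Moerdijk) \emph{restricts} to multiderivations with $\cinf$-linear symbol in that case, so one must simply carry that hypothesis through the local computation and check the exact sequence still makes sense. Neither of these is conceptually deep, but both require care; I would lean on the explicit local frame computation to verify them and then invoke naturality to globalize.
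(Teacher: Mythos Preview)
Your proposal is correct and follows essentially the same approach as the paper's treatment (which itself only sketches the argument, citing Crainic--Moerdijk for details): the projection is the symbol map, its kernel is the $\cinf$-linear part $\Lambda^{n+1}E^\vee\otimes E$, and a connection produces the splitting. The only presentational difference is that you construct a \emph{section} of the symbol map $\op{Der}^n(E)\to\Lambda^n E^\vee\otimes T_X$, whereas the paper writes down the dual \emph{retraction} onto the kernel,
\[
L_D(e_0,\ldots,e_n) = D(e_0,\ldots,e_n) + (-1)^n \sum_i (-1)^{i+1} \nabla_{\sigma_D(\ldots,\widehat{e_i},\ldots)}(e_i),
\]
which is directly checked to be antisymmetric and $\cinf$-multilinear; this sidesteps the sign bookkeeping you flagged as the main obstacle, since one need only verify $\cinf$-linearity of $L_D$ rather than the derivation property of your $D_\nabla(S)$ in every slot.
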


The projection map arises from taking the symbol of an $n$-multiderivation. A connection $\nabla$ allows one to split the inclusion map via
\[
L_D( e_0,\ldots,e_n) = D(e_0,\ldots,e_n) + (-1)^n \sum_i (-1)^{i+1} \nabla_{\sigma_E(\ldots,\hat{e}_i,\ldots)}(e_i),
\]
which is antisymmetric and $\cinf$-multilinear.
Note that the component $\Lambda^{n+1} E^\vee \otimes E$ encodes $\cinf$-linear multiderivations of $\cE$.
Hence, the splitting allows us to recognize that the ``entrywise derivation'' condition simply adds the component $\Lambda^n E^\vee \otimes T_X$, 
which encodes the symbols.

Let $\op{Der}^*(E)$ denote the graded vector bundle $\bigoplus_{n \geq -1} \op{Der}^n(E)[-n]$. Its sheaf of sections $\cD er^*(E)$ has a natural (graded) Lie algebra structure by the standard ``commutator bracket'' for multilinear operators. This statement is the second result in Crainic-Moerdijk.

\begin{lemma}[Prop. 1 of \cite{CrainicMoerdijk}]
Given $D_1 \in \cD er^p(E)$ and $D_2 \in \cD er^q(E)$, the {\it circle product} is
\[
D_2 \circ D_1 (e_0,\ldots,e_{p+q}) = \sum_\pi (-1)^\pi D_2(D_1(e_{\pi(0)},\ldots,e_{\pi(p)}), e_{\pi(p+1)},\ldots,e_{\pi(p+q)}),
\]
where $\pi$ runs over all $(p+1,q)$-shuffles. The {\it Gerstenhaber bracket}
\[
[D_1,D_2] = (-1)^{pq} D_1 \circ D_2 - D_2 \circ D_1
\]
makes $\cD er^*(E)$ into a Lie algebra.
\end{lemma}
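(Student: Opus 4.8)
The statement is that the Gerstenhaber bracket makes the graded vector space $\cD er^*(E)$ into a graded Lie algebra. The plan is to deduce this by transporting it from a graded Lie algebra in which antisymmetry and the Jacobi identity are automatic, namely the derivations of a graded-commutative algebra under the graded commutator. Concretely, let $\cA := \calsym(E^\vee[-1])$ be the sheaf of graded-commutative $\cinf_X$-algebras of fiberwise polynomial functions on the graded manifold $E[1]$, and let $\op{Der}^n(\cA)$ be the sheaf of $\RR$-linear derivations of $\cA$ raising degree by $n$; since $\cA$ is graded-commutative, $\bigoplus_n \op{Der}^n(\cA)$ is a graded Lie algebra under $[\delta_1,\delta_2] = \delta_1\delta_2 - (-1)^{|\delta_1|\,|\delta_2|}\delta_2\delta_1$, with no proof needed. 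The goal is to identify this with $(\cD er^*(E),[\cdot,\cdot])$. (A more hands-on alternative would realize $\cD er^*(E)$ instead as a graded Lie subalgebra of the full space $\bigoplus_{n\geq -1}\op{Hom}_\RR(\Lambda^{n+1}\cE,\cE)[-n]$ of graded-antisymmetric $\RR$-multilinear self-maps of $\cE$ with the Nijenhuis--Richardson bracket --- for which the circle product above \emph{is} the Nijenhuis--Richardson composition and the displayed formula \emph{is} its bracket --- reducing everything to the single point that this subspace is closed under the bracket; I comment on this below.)

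First I would construct, using a connection $\nabla$ on $E$, an isomorphism $\Phi \co \cD er^n(E) \xto{\cong} \op{Der}^n(\cA)$. Given an $n$-multiderivation $D$, define the derivation $\hat D := \Phi(D)$ on the algebra generators $\cinf_X \oplus \cE^\vee$ of $\cA$ by letting $\hat D$ act on $\cinf_X$ through the symbol $\sigma_D \in \Gamma(\Lambda^n E^\vee \otimes \cT_X)$, via $\hat D(f)(e_1,\ldots,e_n) = \sigma_D(e_1,\ldots,e_n)(f)$, and act on $\cE^\vee$ by contracting against the $\cinf_X$-multilinearization $L_D$ of $D$ determined by $\nabla$ (the operator $L_{(-)}$ recalled above), via $\hat D(\xi)(e_0,\ldots,e_n) = \pm\langle \xi, L_D(e_0,\ldots,e_n)\rangle$. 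The mutual compatibility of these two pieces under the Leibniz rule --- exactly what is needed for $\hat D$ to extend to a derivation of $\cA$ --- is precisely the ``derivation in each entry'' condition in the definition of a multiderivation, and $\Phi$ is bijective because a derivation of $\cA$ is freely and uniquely determined by its values on $\cinf_X \oplus \cE^\vee$; indeed $\Phi$ carries the short exact sequence $\Lambda^{n+1}E^\vee\otimes E \to \op{Der}^n(E) \to \Lambda^n E^\vee\otimes T_X$ to the analogous one for $\op{Der}^n(\cA)$, whose $\nabla$-splitting expresses it as $(\Lambda^{n+1}E^\vee\otimes E)\oplus(\Lambda^n E^\vee\otimes T_X)$, and in low arity it recovers $\cE$ for $n=-1$ and the derivations of $\cE$ for $n=0$. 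Then I would verify that $\Phi$ intertwines the Gerstenhaber bracket with the graded commutator, i.e.\ $\Phi([D_1,D_2]) = [\Phi(D_1),\Phi(D_2)]$; since both sides are derivations of $\cA$, it suffices to check equality after evaluating on the generators $f\in\cinf_X$ and $\xi\in\cE^\vee$, where, upon unwinding $\Phi$ and the circle-product formula, each side becomes an explicit signed sum over shuffles that one matches term by term. Given these two steps, the graded Lie algebra structure on $\op{Der}^*(\cA)$ transports to $\cD er^*(E)$, as wanted.

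The hard part will be the second step, the bracket-matching. It is sign-heavy --- one must keep the Koszul signs from the grading on $E$ consistent with the shuffle signs of the circle product --- and it carries the same content that, in the Nijenhuis--Richardson route, is the assertion that $\cD er^*(E)$ is closed under the bracket: inserting $fe$ into the last slot of a circle product $D_i \circ D_j$ and applying the Leibniz rules for $D_1$, $D_2$, and their symbols shows that each circle product \emph{individually} breaks the multiderivation property, throwing up cross-terms of the shape $\sigma_{D_i}(\cdots)(f)\cdot D_j(\cdots,e)$; the crux is that these offending terms cancel in pairs in the antisymmetrized bracket $[D_1,D_2]$, leaving behind $f\,[D_1,D_2](\cdots,e) + \sigma_{[D_1,D_2]}(\cdots)(f)\,e$, which at once shows $[D_1,D_2]$ is a multiderivation and computes its symbol from those of $D_1,D_2$ and the maps themselves. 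I would note finally that the whole verification is local on $X$ --- so a connection is always available, and the choice is immaterial up to isomorphism, just as for $\op{enh}$ --- and that the bracket preserves the rank-one proviso: if $\op{rk}(E)=1$ and $\sigma_{D_1},\sigma_{D_2}$ are $\cinf_X$-linear, then so is $\sigma_{[D_1,D_2]}$.
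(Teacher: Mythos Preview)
The paper does not prove this lemma itself; it is quoted from \cite{CrainicMoerdijk}. Your overall strategy---identify $\cD er^*(E)$ with the graded $\RR$-linear derivations of $\mathsf{C}^\sharp = \Gamma(X,\Sym(E^\vee[-1]))$ and transport the commutator bracket---is exactly what the paper later records as Lemma~\ref{cf der with der} in Section~\ref{sec:whymultider}, and it is how Crainic--Moerdijk themselves argue.

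There is, however, a genuine gap in your construction of the isomorphism $\Phi$. You let $\hat D$ act on $\xi\in\cE^\vee$ by pairing $\xi$ against the connection-dependent $\cinf$-multilinear tensor $L_D$. But this makes $\xi\mapsto\hat D(\xi)$ a $\cinf$-linear map, and then the Leibniz rule $\hat D(f\xi)=\hat D(f)\cdot\xi+f\,\hat D(\xi)$ fails whenever $\sigma_D\neq 0$: your left side is $f\,\hat D(\xi)$, while the right side carries the extra summand $\hat D(f)\cdot\xi$. So your $\hat D$ is not a derivation of $\mathsf{C}^\sharp$, and the assertion that ``mutual compatibility \ldots\ is precisely the derivation-in-each-entry condition'' is not correct for this choice of action on $\cE^\vee$. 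The fix is the connection-free Lie-derivative formula the paper displays just above Lemma~\ref{cf der with der} (equations (6)--(9) of \cite{CrainicMoerdijk}): on a one-form $\xi$, one sets $(\hat D\,\xi)(e_0,\ldots,e_n)$ equal to an alternating sum of terms $\sigma_D(e_0,\ldots,\widehat{e_i},\ldots,e_n)(\xi(e_i))$ over $i$, together with $-\xi(D(e_0,\ldots,e_n))$. For \emph{this} $\Phi$, Leibniz holds precisely because $D$ is entrywise a derivation with symbol $\sigma_D$, and the bracket-intertwining then reduces to the shuffle bookkeeping you anticipate; no connection is needed anywhere.
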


This Lie algebra encodes information of great interest to us.
For instance, if $E$ is concentrated in degree zero, a Lie algebroid structure on $E$ is precisely an element $m \in \cD er^1(E)$ such that $[m,m] = 0$; the $m$ provides a Lie bracket on $\cE$. 
In other words, Maurer-Cartan elements of the dg Lie algebra $(\cD er^*(E), [-,-])$---equipped with the zero differential---are deformations of $E$ from a trivial Lie algebroid to a (possibly) non-trivial Lie algebroid. If $E$ is graded, then a dg Lie algebroid structure on $E$ is encoded by a Maurer-Cartan element of the form $m_L=m_{(0)} + m_{(1)}$, where $m_{(0)}$ is a 0-derivation providing the differential and where $m_{(1)}$ is a 1-derivation providing the Lie bracket. (A natural definition of $\L8$ algebroid structure on $E$ would be a Maurer-Cartan element of $(\cD er^*(E), [-,-])$, although we will not pursue that notion here.) We are thus led to the following definition.

\begin{definition}
The {\it deformation complex} of a dg Lie algebroid $\rho \co L \to T_X$ is the graded Lie algebra of multiderivations $\cD er^*(L^\sharp)$ equipped with the differential $d = [m_L,-]$, where $m_L \in \cD er^1(L^\sharp)$ satisfies $[m_L,m_L] = 0$ and encodes both the differential and the Lie bracket on $\cL^\sharp$. We denote this dg Lie algebra by~$\cD ef(L)$.
\end{definition}

\begin{remark}
Crainic and Moerdijk use a shift $\cD ef(L)[-1]$ of this dg Lie algebra (cf. section 2.4 of \cite{CrainicMoerdijk}), presumably to match the classical convention that for any ordinary Lie algebra $\fg$, a degree 2 cocycle of $C^*(\fg,\fg)$ encodes a first order deformation of the Lie bracket. We prefer to work with $\cD ef(L)$ as we want to have a dg Lie algebra describing deformations, rather than a shifted dg Lie algebra; in other words, we prefer our Maurer-Cartan elements to live in degree 1. An alternative explanation for our preference is that for the commutative dg algebra $C^*(\fg)$, the tangent complex is $C^*(\fg,\fg[1])$, using standard conventions of homological algebra.
\end{remark}

A choice of connection on $L$ allows one to identify $\cD ef(L)[-1]$ with a representation up to homotopy of $L$ with underlying vector bundle $T_X[-1] \oplus L^\sharp$: the splitting of $\op{Der}^*(L)$ induced by the connection gives an isomorphism of graded sheaves
\[
\cD er^*(L^\sharp)[-1] \cong \cS {\rm ym}(L^\vee[-1]) \otimes_{\cinf} (\cT_X[-1] \oplus \cL^\sharp),
\]
and we simply the transfer the differential of $\cD ef(L)[-1]$ along this isomorphism. This procedure is explained in the proof of Theorem 3.11 of \cite{AbadCrainic}, and so provides an alternative definition of the adjoint complex.

\begin{definition}
Given a choice of connection $\nabla$ on the underlying graded vector bundle $L^\sharp$ of a dg Lie algebroid $L$, the {\it adjoint complex} $\op{adj}(L,\nabla)$ of $L$ associated to $\nabla$ is the representation up to homotopy induced by the (shifted) deformation complex~$\cD ef(L)[-1]$.
\end{definition}

It is thus manifest that all adjoint complexes are naturally isomorphic, and not just quasi-isomorphic.

\subsubsection{An explanation for this definition}
\label{sec:whymultider}

The reader might wonder how one would go about inventing this definition of $\cD ef(L)$.
The subtle condition is that an $n$-multiderivation $D$ is a derivation in each entry,
so we focus on explaining where it comes from.

First, consider two natural variants of the construction of $\cD ef$ that are likely more familiar.
If one dropped this derivation condition and simply worked $\RR$-linearly, 
then the remaining pieces of the construction encode the dg Lie algebra of graded derivations of the commutative dg algebra $\Sym_\RR(\cL^*[-1])$.
Indeed, the $n$-multiderivations are a subspace of this big $\RR$-linear construction.
For instance, the Gerstenhaber bracket is just the bracket of derivations on that dg $\RR$-algebra.
On the other hand, if we ignored the Lie algebroid structure on $L$ but worked $\cinf$-linearly, 
then the underlying graded algebra of $\calgd(L)$ is $\Sym_{C^\infty}(\Gamma(L^\vee)[-1])$ 
and the graded derivations are $\cinf$-linear.
Hence the dg Lie algebra of $\cinf$-linear derivations is $\Sym_{C^\infty}(\Gamma(L^\vee)[-1]) \otimes_\cinf \Gamma(L)$.
Note that this object does provide a summand of~$\cD ef(L)$.

The subtle condition on multiderivations tries to fit between these two variants,
just as a Lie algebroid tries to fit between an $\RR$-linear Lie algebra and a $\cinf$-linear Lie algebra. 
Indeed, consider the underlying graded-commutative $\RR$-algebra of $\calgd{L}$: 
\[
\mathsf{C}^\sharp(L) = \Gamma(X,\Sym(L^\vee[-1]).
\]
The graded derivations of this algebra over $\RR$ (i.e., {\em not} over $\cinf(X)$) naturally form a graded Lie algebra, via the commutator bracket.
As shown in section 2.5 of \cite{CrainicMoerdijk}, every $n$-multiderivation $D$ of $L$ determines such a graded derivation of $\mathsf{C}^\sharp(L)$, by a formula analogous to the Lie derivative.
(See equations (6)-(9) therein.)
In fact, they prove the following.

\begin{lemma}
\label{cf der with der}
This Lie derivative-type construction determines an isomorphism of graded Lie algebras from $\cD er^*(L^\sharp)$ to the graded $\RR$-linear derivations of $\mathsf{C}^\sharp(L)$.
\end{lemma}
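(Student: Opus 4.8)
The plan is to recapitulate the argument of \cite{CrainicMoerdijk}, section 2.5, in three steps: produce the Lie-derivative operator, prove it is bijective, and prove it intertwines the two brackets. Each step is local on $X$ and additive in the multiderivation degree, so one may fix the degree $n$ throughout. Recall first that the Cartan-type formula of equations (6)--(9) of \cite{CrainicMoerdijk} assigns to an $n$-multiderivation $D$ of $\cL^\sharp$ an $\RR$-linear graded derivation $\mathcal{L}_D$ of the graded-commutative $\RR$-algebra $\mathsf{C}^\sharp(L) = \Gamma(X,\Sym(L^\vee[-1]))$; concretely $\mathcal{L}_D$ is prescribed on the algebra generators $\cinf(X) = \Sym^0$ and $\Gamma(L^\vee) = \Sym^1$ in terms of the symbol of $D$ and of $D$ itself, and then extended by the Leibniz rule. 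The first thing to check is that $\mathcal{L}_D$ is well defined and is genuinely a derivation over $\RR$---this is the computation performed in \cite{CrainicMoerdijk}---and that $\mathcal{L}_D$ raises the symmetric-power grading by $n$ while preserving internal degree, so that $D \mapsto \mathcal{L}_D$ respects the grading under which $\cD er^*(L^\sharp) = \bigoplus_{n \geq -1} \Gamma(\op{Der}^n(L^\sharp))[-n]$.

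For bijectivity I would argue by the five lemma, after fixing a connection on $L$. On the multiderivation side, Lemma 1 of \cite{CrainicMoerdijk} (applied to $E = L^\sharp$) gives a short exact sequence whose sub is the sheaf of $\cinf$-linear multiderivations and whose quotient is the sheaf of symbols, split by the connection. On the derivation side, a graded $\RR$-derivation $\delta$ of $\mathsf{C}^\sharp(L)$ of symmetric-degree $n$ is determined by its restrictions to the generators $\cinf(X)$ and $\Gamma(L^\vee)$; applying the $\RR$-Leibniz rule to products $f\xi$ with $f \in \cinf(X)$ and $\xi \in \Gamma(L^\vee)$ shows that $\delta|_{\cinf(X)}$ is a derivation of $\cinf(X)$ valued in the locally free module $\Gamma(\Lambda^n L^\vee)$---equivalently a section of $\Lambda^n L^\vee \otimes T_X$, the symbol of $\delta$---while $\delta$ vanishes on $\cinf(X)$ exactly when it is $\cinf$-linear, in which case it is freely determined by a $\cinf$-linear bundle map $L^\vee \to \Lambda^{n+1}L^\vee$. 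This presents the degree-$n$ $\RR$-derivations as an extension of the symbols by the $\cinf$-linear derivations, split by the same connection and matching the previous sequence termwise; one then reads off from the explicit formulas that $\mathcal{L}_{(-)}$ is a morphism of these two short exact sequences inducing the identity on sub and on quotient, whence it is an isomorphism by the five lemma.

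It remains to verify $\mathcal{L}_{[D_1,D_2]} = [\mathcal{L}_{D_1},\mathcal{L}_{D_2}]$, where the left-hand side is the Gerstenhaber bracket of Proposition 1 of \cite{CrainicMoerdijk} and the right-hand side is the graded commutator of derivations. Since both sides are $\RR$-derivations of $\mathsf{C}^\sharp(L)$ of the same degree, it is enough to compare them on the generators $\cinf(X)$ and $\Gamma(L^\vee)$: on $\cinf(X)$ the identity unwinds to the rule by which the symbols of $D_1$ and $D_2$ combine under the circle product, and on $\Gamma(L^\vee)$ it is the dual of the defining formula for $D_2\circ D_1$ together with the matching of the symbol cross-terms. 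I expect this last step to be the main obstacle, because $\mathcal{L}$ does \emph{not} respect the individual circle products ($\mathcal{L}_{D_2\circ D_1}\ne\mathcal{L}_{D_2}\circ\mathcal{L}_{D_1}$ in general), only their antisymmetrized combination, so the check is a Koszul-sign bookkeeping exercise---in effect the one that pins down the normalization of the bracket in \cite{CrainicMoerdijk}. Granting it, $D\mapsto\mathcal{L}_D$ is an isomorphism of graded Lie algebras, as claimed.
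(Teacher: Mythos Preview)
Your proposal is correct and follows precisely the approach the paper itself takes: the paper does not supply an independent proof of this lemma but simply cites section 2.5 of \cite{CrainicMoerdijk}, and you have faithfully recapitulated that argument. The three-step structure (construct $\mathcal{L}_D$, prove bijectivity via the matching short exact sequences and the five lemma, then verify bracket compatibility on generators) is exactly the Crainic--Moerdijk argument, so there is nothing to contrast.
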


The idea of the construction is easy to see in the simplest case.
Let $D$ be a multiderivation of degree $0$. 
It determines a linear endomorphism $L_D$ of $\mathsf{C}^\sharp(L)$ by 
\[
L_D(\alpha)(\ell_0,\ldots,\ell_n) = L_{\sigma_D}(\alpha(\ell_0,\ldots,\ell_n)) - \sum_{i=0}^n \alpha(\ell_0,\ldots, D\ell_i, \ldots,\ell_n).
\]
Note that in particular that for a single section
\[
L_D(a)(\ell) = L_{\sigma_D}(a(\ell)) - a(D\ell)
\]
and so
\begin{align*}
L_D(fa)(\ell) &= L_{\sigma_D}(fa(\ell)) - fa(D\ell) \\
&= \sigma_D(f) a(\ell) + f(L_{\sigma_D}(a(\ell)) - a(D\ell))
\end{align*}
The higher degree multiderivations simply involve many extra factors.

This approach may seem strange, as it does not to depend on the Lie algebroid structure but only on the underlying vector bundle.
But this is precisely what we want if we know we will be working with {\em some} Lie algebroid structure on $L$, 
but do not want to fix it ahead of time. 
(That is, we want a construction that recovers the correct derivations but is uniform in all Lie algebroid structures.)
Then it is natural to ask that we have an $\RR$-linear derivation that is entrywise a derivation,
while not specifying the form of that entrywise derivation.
This is what the $n$-multiderivation condition does.
The differential $[m_L,-]$ is used to enforce
compatibility with a choice of Lie algebroid structure.

\subsubsection{$\cD ef$ and the tangent bundle of an $\L8$ space}

We now want to relate the deformation ({\it aka} adjoint) complex of a Lie algebroid $L$ to the tangent bundle of its $\L8$ space $\op{enh}(L)$. Recall that for an $\L8$ space $(X,\fg)$, the {\it tangent bundle} is the vector bundle $\fg[1]$ equipped with the adjoint action of $\fg$. Hence, the sections of this tangent bundle are the Chevalley-Eilenberg complex~$\widehat{C}^*(\fg,\fg[1])$.

\begin{prop}\label{prop:tangent}
For each dg Lie algebroid $L$ over the manifold $X$, there is an isomorphism of dg Lie algebras between 
\begin{itemize}
\item the tangent bundle $T_{\op{enh}(L)}$ of the $\L8$ space $\op{enh}(L)$ associated to $L$ and
\item the vector bundle $\op{enh}_{\op{mod}}(\op{adj}(L,\nabla)[1])$ on $\op{enh}(L)$ associated to an adjoint complex of~$L$.
\end{itemize}
\end{prop}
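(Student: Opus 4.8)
The plan is to compute the sheaf of sections of each side as a sheaf of dg Lie algebras on $X$ and to exhibit a natural isomorphism between them. Both brackets will be ``graded commutator of derivations'' brackets, and the geometric input is that taking $\infty$-jets turns multiderivations of $L$ into \emph{all} of the $\cinf_X$-linear graded derivations of the jet algebra of $\calgd(L)$.

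First I would unwind the two sides. On the left, the sheaf of sections of $T_{\op{enh}(L)}$ is $\widehat{C}^*(\op{enh}(L),\op{enh}(L)[1])$, and for any $\L8$ space $(X,\fg)$ the graded module $\widehat{C}^*(\fg,\fg[1])^\sharp\cong\csym_{\Omega^\sharp_X}(V_\fg^\vee[-1])\otimes_{\Omega^\sharp_X}V_\fg[1]$ is canonically the module of $\Omega^\sharp_X$-linear graded derivations of $\widehat{C}^*(\fg)^\sharp$, the Chevalley--Eilenberg differential of the adjoint module becoming the commutator $[\dd_\fg,-]$ and the bracket becoming the graded commutator. Hence, by part (2) of Theorem \ref{thm:construction}, $T_{\op{enh}(L)}$ is, as a sheaf of dg Lie algebras, the $\Omega^*_X$-linear graded derivations $\op{Der}_{\Omega^*_X}\bigl(dR_X(J(\calgd(L)))\bigr)$ with commutator bracket and differential $[\dd_{\op{enh}(L)},-]$. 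On the right, the construction in the proof of Theorem \ref{thm:repvb} assigns to a representation up to homotopy $\calgd(L,E)$ the vector bundle on $\op{enh}(L)$ with sheaf of sections $dR_X(J(\calgd(L,E)))$; since $\op{adj}(L,\nabla)$ is by definition the representation obtained from $\cD ef(L)[-1]$ via the connection $\nabla$, the sheaf of sections of $\op{enh}_{\op{mod}}(\op{adj}(L,\nabla)[1])$ is $dR_X(J(\cD ef(L)))$, carrying the dg Lie structure obtained by transporting the Gerstenhaber bracket of $\cD ef(L)$ along the (symmetric monoidal, by Proposition \ref{prop:jetsm}) jet functor and then $\Omega^*_X$-linearly along $dR_X$.

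The heart of the argument will be a natural isomorphism of sheaves of dg Lie algebras
\[
dR_X\bigl(J(\cD ef(L))\bigr)\;\xrightarrow{\ \cong\ }\;\op{Der}_{\Omega^*_X}\bigl(dR_X(J(\calgd(L)))\bigr),
\]
which I would assemble in three steps. (i) By Lemma \ref{cf der with der}, the underlying bundle $\op{Der}^*(L^\sharp)$ of $\cD ef(L)$ is canonically the sheaf of $\RR$-linear graded derivations of the graded algebra $\mathsf{C}^\sharp(L)=\calgd(L)^\sharp$, realized as first-order differential operators; the Gerstenhaber bracket is their commutator, and the differential $[m_L,-]$ is $[d_L,-]$ since $d_L=L_{m_L}$. (ii) Applying $J$: a jet of such a differential-operator derivation acts on $J(\calgd(L))$ as a $\cinf_X$-linear graded derivation, and, by the universal property of the $\infty$-jet bundle (Propositions \ref{prop:jetsm} and \ref{prop:grconn}), this gives an isomorphism $J(\cD ef(L))\xrightarrow{\,\cong\,}\bigl(\op{Der}_{\cinf_X}(J(\calgd(L))),[J(d_L),-]\bigr)$ of sheaves of dg Lie algebras over $\cinf_X$ onto \emph{all} $\cinf_X$-linear graded derivations of the jet algebra --- after choosing splittings (Lemma \ref{lem:algebra}) both underlying modules are $\csym(T^\vee_X\oplus L^\vee[-1])\otimes_{\cinf_X}(T_X\oplus L^\sharp[1])$. (iii) An $\Omega^\sharp_X$-linear graded derivation of $dR_X(J(\calgd(L)))=\Omega^*_X\otimes_{\cinf_X}J(\calgd(L))$ is, by $\Omega^\sharp_X$-linearity, precisely $\Omega^\sharp_X$ tensored with a $\cinf_X$-linear graded derivation of $J(\calgd(L))$; composing with (ii) identifies the right-hand side above with $dR_X(J(\cD ef(L)))$ as a graded $\Omega^\sharp_X$-module, and the identification intertwines the brackets because each is a commutator of derivations and $J$ is monoidal.

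Finally I would match differentials and conclude. By the proof of Theorem \ref{thm:construction}, $\dd_{\op{enh}(L)}=\nabla^{\op{can}}+J(d_L)$, the sum of the canonical flat connection on jets and $J(d_L)$; under the isomorphism above, the summand $[\nabla^{\op{can}},-]$ becomes $\nabla^{\op{can}}$ on $J(\cD ef(L))$ (the jet connection on $J(\calgd(L))$ inducing, via step (ii), that on $J(\cD ef(L))$), while $[J(d_L),-]$ becomes $J([m_L,-])$ by steps (i)--(ii); their sum is exactly the differential $\nabla^{\op{can}}+J([m_L,-])$ of $dR_X(J(\cD ef(L)))$ from the second paragraph. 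The shift bookkeeping matches on the nose --- both underlying vector bundles on $X$ have the form $\Omega^\sharp_X\bigl(\csym(T^\vee_X)\otimes(T_X\oplus L^\sharp[1])\bigr)$ --- and, since all adjoint complexes of $L$ are canonically isomorphic and the construction is natural in $L$, the resulting isomorphism is independent of $\nabla$; this proves Proposition \ref{prop:tangent}. The main obstacle I anticipate is making steps (ii) and (iii) genuinely precise: realizing the chain ``differential operators on $\calgd(L)$, then fibrewise $\cinf_X$-linear operators on $J(\calgd(L))$, then $\Omega^*_X$-linear operators on the de Rham resolution'' at the level of \emph{derivations} rather than merely modules, and carrying the twisted differential through so that both the commutator brackets and the differentials really do correspond --- the standard Gelfand--Kazhdan/Fedosov globalization bookkeeping, after which the argument is formal.
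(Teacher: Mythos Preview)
Your approach is correct in outline and shares the paper's key ingredients---Lemma~\ref{cf der with der} identifying multiderivations with graded derivations of $\mathsf{C}^\sharp(L)$, and the passage through jets to $\Omega^*_X$-linear derivations of $dR(J(\calgd(L)))$---but it is organized differently from the paper's proof in two respects.

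First, the paper proceeds by reduction: it establishes the isomorphism for the \emph{trivial} dg Lie algebroid $L_0$ (zero bracket, zero differential), and then observes that a general $L$ is encoded by a Maurer--Cartan element $m_L\in\cD ef(L_0)$ whose image $j_\infty(m_L)$ deforms $\op{enh}(L_0)$ to $\op{enh}(L)$. This two-step strategy separates the module/bracket identification (done once, for $L_0$) from the differential (carried along by the deformation), and as the paper remarks, it even yields an independent proof of Theorem~\ref{thm:construction}. Your direct treatment of general $L$ is equally valid but requires you to track the twisted differential $[m_L,-]$ through every step simultaneously, which you do in your final paragraph.

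Second, and more substantively, the precise mechanism the paper uses for your step~(ii) is not the isomorphism $J(\cD ef(L))\cong\op{Der}_{\cinf_X}(J(\calgd(L)))$ you posit, but rather the identification
\[
\cD er^*(L_0^\sharp)\;\xrightarrow{\ \cong\ }\;Der^*_{\cD_X}\bigl(\cJ(\calgd(L_0))\bigr),
\]
with $\cD_X$-linear (not merely $\cinf_X$-linear) derivations. The inverse is given by restriction to horizontal sections, which are exactly $\calgd(L_0)$ by Proposition~\ref{prop:grconn}. One then embeds $Der^*_{\cD_X}(\cJ(\calgd(L_0)))$ as the horizontal sections of $dR\bigl(Der^*_{\cinf_X}(\cJ(\calgd(L_0)))\bigr)$. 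This is precisely the ``obstacle'' you flag at the end: the paper's $\cD_X$-linearity argument is the clean way to make your step~(ii) rigorous, and once you have it, your claimed isomorphism $J(\cD ef(L))\cong\op{Der}_{\cinf_X}(J(\calgd(L)))$ follows by tensoring up along $\cJ$ over $\cinf_X$. Both routes arrive at the same place; the paper's is more conceptual because the horizontal-sections inverse is manifest.
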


To be clear here, the Lie structure on $T_{\op{enh}(L)}$ is by the bracket as vector fields;
in other words, we view elements as derivations of the commutative dg algebra $\widehat{C}^*(\op{enh}(L))$ over $\Omega^*_X$ and work with commutators of derivations. On the other hand, the Lie structure on $\op{enh}_{\op{mod}}(\op{adj}(L,\nabla)[1])$ is transferred from the Lie structure on the adjoint complex itself because $\op{enh}_{\op{mod}}$ is symmetric monoidal.

\begin{proof}
We begin by verifying the claim in a particularly simple case: the dg Lie algebroid $L$ has only zero brackets. In other words, it is simply a graded vector bundle that we will denote $L_0$, in order to emphasize the triviality of the brackets. Fix a connection on $L_0$, and so on $L_0^\vee$, and a connection on $T_X$, and so on~$T^\vee_X$.

In this case, several constructions simplify substantially. For example, the Chevalley-Eilenberg complex $\calgd(L_0)$ is simply $\cS{\rm ym}(L^\vee_0[-1])$ and the differential is zero. Hence, using our connections, we obtain an $\L8$ space $\op{enh}(L_0)$ via
\begin{align*}
dR_X(J({\rm Sym}(L^\vee_0[-1]))) &\cong \Omega_X^*(\Sym(T^\vee_X \oplus L^\vee_0[-1])) \\
&\cong \widehat{C}^*(\op{enh}(L_0)).
\end{align*}
As another simplification, $\cD ef(L_0)$ is a differential graded Lie algebra with trivial differential. Fixing a connection on $L_0$, we find that the associated adjoint complex is $\cS{\rm ym}(L^\vee_0[-1]) \otimes_{\cinf} (\cT_X[-1] \oplus \cL_0)$. Using our connections, we obtain an isomorphism $\cJ \cong \cS{\rm ym}(T^\vee_X)$ and thus an isomorphism
\[
\cJ(\op{adj}(L_0,\nabla)) \cong \cS{\rm ym}(T^\vee_X \oplus L^\vee_0[-1]) \otimes_{\cinf} (\cT_X[-1] \oplus \cL_0).
\]
Taking the de Rham complex for the Grothendieck connection on jets, we obtain an isomorphism 
\begin{align*}
dR_X(J(\op{adj}(L_0,\nabla))) &\cong \Omega_X^*(\Sym(T^\vee_X \oplus L^\vee_0[-1]) \otimes (T_X[-1] \oplus L_0)) \\
&\cong \widehat{C}^*(\op{enh}(L_0),\op{enh}(L_0)).
\end{align*}
Hence our construction $\op{enh}_{\op{mod}}$ sends the shifted deformation complex $\cD ef(L_0)[-1]$ to
\[
\widehat{C}^*(\op{enh}(L_0),\op{enh}(L_0)) \cong \widehat{C}^*(\op{enh}(L_0),\op{enh}(L_0)[1])[-1]  = T_{\op{enh}(L_0)}[-1],
\] 
the shifted tangent bundle of the $\L8$ space~$\op{enh}(L_0)$.

This argument simply identifies the underlying bundles and connections.
We now explain why these isomorphisms actually respect the Lie brackets.

First, we recall an important fact as background.
As noted in Remark \ref{rmk: jet of diff op}, 
a differential operator $P \co \cE \to \cF$ between sections of vector bundles determines a $\cD_X$-linear map $J(P) \co \cJ(E) \to \cJ(F)$;
conversely, every such $\cD_X$-linear map determines a differential operator.
Hence for a fixed graded vector bundle $E$, there is a natural isomorphism
\[
J: {\rm Diff}(\cE,\cE) \xto{\cong} \Hom_{\cD_X}(\cJ(E),\cJ(E))
\]
of graded Lie algebras, where we use the commutator bracket on both sides.

As explained in section \ref{sec:whymultider}, in the guise of Lemma \ref{cf der with der},
the multiderivations and their commutator are precisely the graded Lie algebra of graded derivations, over $\RR$, of $\calgd(L_0) = \mathsf{C}^\sharp(L_0)$.
Such graded derivations are, among other things, differential operators on $\calgd(L_0)$.
In particular, the graded Lie algebra of multiderivations is simply a sub-graded Lie algebra of ${\rm Diff}(\calgd(L_0),\calgd(L_0))$ and hence of $\Hom_{\cD_X}(\cJ(\calgd(L_0)),\cJ(\calgd(L_0)))$.
For a multiderivation $D$, let $J(D)$ denote the corresponding $\cD_X$-linear endomorphism of~$J(\calgd(L_0))$.

Now consider 
\[
Der^*_{\cD_X}(\cJ(\calgd(L_0))) \subset \Hom_{\cD_X}(\cJ(\calgd(L_0)),\cJ(\calgd(L_0))),
\]
the sub-graded Lie algebra of graded derivations of $\cJ(\calgd(L_0))$ as a graded-commutative algebra in the category of $\cD_X$-modules (using tensor over $\cinf(X)$ as the symmetric monoidal structure).
This operator $J(D)$ lives in this sub-graded Lie algebra, as it is a graded derivation of $\calgd(L_0)$,
so that we have a map of graded Lie algebras
\[
J: \cD er^*(L_0) \to Der^*_{\cD_X}(\cJ(\calgd(L_0)))
\]
by restriction.

This map is, in fact, an isomorphism, as follows.
Note that every graded derivation $\delta \in Der^*_{\cD_X}(\cJ(\calgd(L_0)))$ commutes with the flat connection on $J(\calgd(L_0))$, as it is $\cD_X$-linear. 
Hence $\delta$ preserves the sheaf of horizontal sections of $J(\calgd(L_0))$.
But these horizontal sections are precisely $\calgd(L_0)$ by Proposition \ref{prop:grconn}. 
Hence $\delta$ determines a multiderivation $D_\delta$,
by restricting to the horizontal sections of~$J(\calgd(L_0))$.
This construction is inverse to the map $J$ by direct inspection.

So far we have spoken of maps between global sections, 
but it becomes convenient now to talk at the level of  sheaves.
Recall that for any $\cD_X$-modules $\cM,\cN$,
there is a natural map
\[
\shom_{\cD_X}(\cM,\cN) \hookrightarrow dR(\shom_{\cinf}(\cM,\cN)),
\]
because a ${\cD_X}$-linear map is a $\cinf$-linear map and the the canonical ${\cD_X}$-module structure on $\shom_{\cinf}(\cM,\cN)$ picks out $\cD_X$-linear maps as horizontal sections.
By construction, this map is a quasi-isomorphism.
Hence there is a natural map
\[
Der^*_{\cD_X}(\cJ(\calgd(L_0))) \hookrightarrow dR(Der^*_{\cinf_X}(\cJ(\calgd(L_0)))
\]
by restricting to the derivations inside all maps.

Stringing all of our identifications together,
we obtain a map of sheaves
\[
\cD er^*(L_0) \xto{\cong} Der^*_\RR(\calgd(L_0)) \xto{\cong} Der^*_{\cD_X}(\cJ(\calgd(L_0))) \hookrightarrow dR(Der^*_{\cinf_X}(\cJ(\calgd(L_0))).
\]
In words, it says that every multiderivation determines a horizontal section of the de Rham complex of derivations of jets.
This composite is a quasi-isomorphism.
As taking the de Rham complex of left ${\cD_X}$-modules is symmetric monoidal,
the rightmost complex is isomorphic to 
\[
Der^*_{\Omega^*_X}(dR(J(\calgd(L_0)))) \cong Der^*_{\Omega^*_X}(\widehat{C}^*(\op{enh}(L_0)))
\]
which is an equivalent description of~$T_{\op{enh}(L_0)}$.

This complicated composite map is about how operations on smooth sections determine corresponding operations on their jets.
Since the map intertwines the actions as derivations, it manifestly enhances the earlier identifications involving the adjoint complex, which only involved the vector bundle structures.

\begin{remark}
We remark that another approach is to write out explicitly the Lie brackets, done most concretely by working in local coordinates with a choice of frame of $L_0$. For the authors at least, this approach did not illuminate why they agree.
\end{remark}

Having established the proposition for a trivial Lie algebroid, we turn to the general case.
It follows, in fact, as a deformation of the trivial case just explained. If the dg Lie algebroid $L$ corresponds to a Maurer-Cartan element $m_L$ in $\cD ef(L_0)$, then $j_\infty(m_L)$ provides a Maurer-Cartan element of $dR_X(J(\op{adj}(L_0,\nabla)[1]))$ and hence of $\widehat{C}^*(\op{enh}(L_0),\op{enh}(L_0)[1])$. Since the latter dgla controls deformations of $\op{enh}(L_0)$, we thus obtain a new $\L8$ space;  we want to identify it with $\op{enh}(L)$. 
Let us use the connection on $L_0$ to produce $\op{enh}\;L$. In that case, we are using the same underlying isomorphism of $\cinf_X$-modules
\[
J(\csym(L^\vee[-1])) \cong \csym(T^\vee_X \oplus L^\vee[-1])
\]
for both $L_0$ and $L$. Thus, there is a canonical isomorphism of $\cinf$-modules
\[
dR_X(J(\calgd(L_0)))^\sharp \cong dR_X(J(\calgd(L)))^\sharp \cong \Omega_X^\sharp(\csym(T^\vee_X \oplus L^\vee[-1])).
\]
The difference between the differentials on the first two spaces is precisely $j_\infty(d_L)$, the differential on $\calgd(L)$ that arises from the nontrivial dg Lie algebroid structure. Under the identification between a dg Lie algebra structure (i.e., differential and bracket) and the differential on its Chevalley-Eilenberg cochains, the Maurer-Cartan element $m_L$ identifies with $d_L$ and so $j_\infty(m_L)$ identifies with $j_\infty(d_L)$.  Hence, the deformation of $\op{enh}(L_0)$ by $j_\infty(m_L)$, transferred along the splitting isomorphism, is precisely~$\op{enh}(L)$. 
\end{proof}

\begin{remark}
This proof is independent of Theorem \ref{thm:construction} and actually provides an alternative proof. One starts by checking directly the case of a trivial Lie algebroid, which is a straightforward application of results about $\infty$-jets, and then applies the deformation to obtain the general case.
\end{remark}

\begin{remark}\label{l8algebroid}
In the proof above, we could have worked with an arbitrary Maurer-Cartan element $\cD ef(L_0)$, which encodes an $\L8$ algebroid, rather than an element encoding a dg Lie algebroid. This notion of $\L8$ algebroid is well-behaved, but we feel that it would add an unnecessary layer of complexity to this paper to develop the full categorical aspects of $\L8$ algebroids necessary for articulating our main results at this level of generality. Several such aspects are described in \cite{OhPark} under the name of {\it strong homotopy Lie algebroid} or in \cite{Kjeseth} as {\it homotopy Lie-Rinehart pairs}. More recent appearences of this notion can be found in \cite{CCT, Poncin, SSS}.  We think it would be useful and interesting to see the various flavors of this formalism unified and expanded.
\end{remark}

\begin{remark}
An alternative generalization of representations up to homotopy has been put forth by Vitagliano \cite{Luca}.  Further, his $\mathit{LR}_\infty$ modules also generalize actions of $\L8$ algebras on dg manifolds.  It could prove useful to relate his notions to our constructions in $\L8$ spaces.
\end{remark}

\subsection{The Weil complex as a de Rham complex}

Under our correspondence between representations up to homotopy of $L$ and vector bundles on $\op{enh}(L)$, the Weil algebra of $L$ goes to the de Rham complex of $\op{enh}(L)$, as we now explain. We thus obtain new perspectives on this de Rham complex in light of the prior work on the Weil algebra. For instance,  Arias Abad and Crainic \cite{AbadCrainic} show this Weil complex includes the BRST complex of Kalkman \cite{Kalkman}. In other work, they show it also has a natural role in studying the cohomology of classifying spaces~\cite{ACWeil,ACBott}.

\subsubsection{The Weil algebra in Lie theory}

Recall that the {\em Weil algebra} $W(\fg)$ of an ordinary Lie algebra $\fg$ is a commutative dg algebra whose underlying graded algebra is $\Sym(\fg^\vee[-1] \oplus \fg^\vee[-2])$ and whose differential has the following form. Note that $d_{W(\fg)}$ is a derivation and thus determined by its behavior on the subspace of generators $\fg^\vee[-1] \oplus \fg^\vee[-2]$. The degree 2 component of $W(\fg)$ consists of $\Lambda^2 \fg^\vee \oplus \fg^\vee$, and on the degree one component the differential $d_{W(\fg)}$ breaks up as a sum $d_{CE} + d_{dR}$, where $d_{CE}$ is the differential in the Chevalley-Eilenberg cochain complex $C^*(\fg)$ and where $d_{dR}$ is simply the identity. The degree 3 component of $W(\fg)$ consists of $\Lambda^3 \fg^\vee \oplus \Lambda^2 \fg^\vee \otimes \fg^\vee \oplus \Sym^2 \fg^\vee$, and on the linear piece $\fg^\vee$ of the degree 2 component, the differential $d_{W(\fg)}$ breaks up as a sum of three terms of which only the term $\fg^\vee \to  \Lambda^2 \fg^\vee \otimes \fg^\vee$ is nontrivial. It is given by the differential of $C^*(\fg, \fg^\vee)$, and so is determined by the coadjoint action. The Weil complex has $H^k W(\fg) = 0$ for $k \neq 0$ and $H^0 W(\fg)~=~\CC$.

There is a  more succinct and conceptual way to obtain the Weil algebra: it is the de Rham complex of the commutative dg algebra $C^*(\fg)$. We quickly outline this construction. First, identify $C^*(\fg,\fg^\vee[-1])$ as the K\"ahler differentials of the Chevalley-Eilenberg cochains, which thus possesses a universal derivation map $d_{dR} \co C^*(\fg) \to C^*(\fg, \fg^\vee)$. Extending this de Rham differential to higher de Rham forms in the usual fashion, we obtain a double complex
\[
C^*(\fg) \xto{d_{dR}} C^*(\fg, \fg^\vee[-1]) \xto{d_{dR}} C^*(\fg, \Lambda^2(\fg^\vee[-1])) \to \cdots
\]
whose totalization we call the de Rham complex of $C^*(\fg)$. It is manifest from this construction that it is the Weil algebra. 

We can now explain why the Weil complex has trivial cohomology, except in degree 0. In the double complex constructed in the previous paragraph, the vertical differential in column $p$ is simply the differential on $C^*(\fg, \Sym^p(\fg)[-p])$. If we ignore the vertical differential and use just the horizontal differential, the total complex is the symmetric algebra on the two-term complex $\fg[-1] \xto{\id} \fg[-2]$. (This claim is just a version of the fact that the de Rham differential on polynomials sends a generator $x$ to the 1-form $dx$.)  Hence, using the spectral sequence of the filtration by the internal, or vertical, degree of this de Rham complex for $C^*(\fg)$, we see that the first page is trivial except for $p = 0 =q$ by the Poincar\'e lemma for polynomials.

\subsubsection{The Weil algebra for Lie algebroids}

In section 5 of \cite{AbadCrainic}, Arias Abad and Crainic  define a Weil algebra $\cW (L, \nabla)$ associated to a Lie algebroid $L$ with a choice of connection $\nabla$ on the underlying vector bundle. This connection is used to define a coadjoint representation $((\op{adj}\,L)^\vee , \nabla)$, but every choice of connection produces an isomorphic Weil algebra.

Their Weil algebra is the double complex with 
\[
\cW (L, \nabla)^{p,q} = \bigoplus_{k} \Gamma(X,  \Lambda^{p-k} (L^\vee) \otimes \Lambda^{q-k} (T^\vee_X) \otimes \Sym^k (L^\vee))
\]
and with differential a sum of a horizontal and vertical component.
In other words, the underlying graded algebra of the Weil algebra is generated over smooth functions $\cinf(X)$ by a copy of $\Gamma(X,L^\vee)$ equipped with bidegree (1,0), a copy of $\Omega^1(X) = \Gamma(X,T^*_X)$ equipped with bidegree (0,1), and a copy of $\Gamma(X,L^\vee)$ equipped with bidegree (1,1). (To compare with our discussion above, note that Arias Abad and Crainic swap the vertical and horizontal directions relative to our construction. We will adhere to their conventions here to simplify comparison.) 
To specify the differential, it is thus enough to say how it acts on these generators.

We describe the vertical differential first. The column $\cW^{0,*}(L,\nabla)$ is simply a copy of the de Rham complex $\Omega^*(X)$, and hence the vertical differential on this column is simply the exterior derivative. 
The column $\cW^{1,*}(L,\nabla)$ has underlying graded vector space $\Omega^\sharp(X, L^\vee) \oplus \Omega^\sharp(X, L^\vee)[-1]$, but the vertical differential is a little complicated: if $(a,b)$ is an element of the direct sum, then
\[
d^{ver}((a,b)) = (\nabla a - R_\nabla b, \nabla b + a),
\]
where $\nabla$ here denotes the chosen connection (not usually flat!) on $L^\vee$ and $R_\nabla$ denotes the associated curvature.
Note that the vertical differential on this Weil algebra is thus a  natural Lie algebroid analog of what we called the horizontal differential above, which arose from the identity map $\fg \to \fg$. Arias Abad and Crainic express this perspective using a ``double" construction (see example 3.8 in~\cite{AbadCrainic}).

The horizontal differential is a bit simpler to describe.
The row $\cW^{*,0}(L,\nabla)$ is simply a copy of $\calgd(L)$.
The row $\cW^{*,1}(L,\nabla)$ is the coadjoint representation $((\op{adj}\,L)^\vee , \nabla)$ with the differential ``conjugated,'' in the terminology of Arias Abad-Crainic, which simply means to modify the sign of the differential in the usual way due to shifting the complex up by one degree.
A direct computation verifies that the vertical differential mapping the zeroth row to the first row is a derivation from the commutative dg algebra $\calgd(L)$ to the coadjoint representation; in other words, it is a kind of exterior derivative from functions to 1-forms.

The following result should thus come as no surprise.

\begin{prop}\label{prop:weil}
The de Rham complex of $\op{enh}(L)$ is the image under $\op{enh}_{\op{mod}}$ of the (totalization of the) Weil complex of~$L$.
\end{prop}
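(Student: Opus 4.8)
The plan is to view both the Weil complex of $L$ and the de Rham complex of $\op{enh}(L)$ as instances of the same formal construction --- ``exterior powers of the cotangent bundle, totalized along the exterior derivative'' --- and then observe that $\op{enh}_{\op{mod}}$ intertwines the two. Recall that the de Rham complex of $\op{enh}(L)$ is the totalization of the double complex whose $k$-th column is the sheaf of $k$-forms $\Omega^k_{\op{enh}(L)} = \widehat{C}^*(\op{enh}(L), \Lambda^k T^\vee_{\op{enh}(L)})$, with one differential the internal Chevalley--Eilenberg differential and the other the de Rham differential $d_{dR}$. On the other side, Arias Abad--Crainic's Weil complex $\cW(L,\nabla)$ is, as recalled above, the totalization of the double complex whose $k$-th row is the representation up to homotopy built from $\Lambda^k$ of the coadjoint representation $(\op{adj}\,L)^\vee$ (with the shift they call ``conjugation''), again with the internal Chevalley--Eilenberg differential in one direction and an exterior-derivative-type differential in the other. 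It therefore suffices to produce an isomorphism of these two double complexes, natural in $L$, realized by $\op{enh}_{\op{mod}}$.

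First I would match the pieces. By Proposition \ref{prop:tangent}, $\op{enh}_{\op{mod}}(\op{adj}(L,\nabla)[1]) \cong T_{\op{enh}(L)}$. Since $\op{enh}_{\op{mod}}$ is symmetric monoidal, it commutes with graded duals and exterior powers, so it carries the (shifted) coadjoint complex to the cotangent bundle $T^\vee_{\op{enh}(L)}$ and, more generally, $\Lambda^k$ of the coadjoint complex to $\Lambda^k T^\vee_{\op{enh}(L)}$, i.e. to $\Omega^k_{\op{enh}(L)}$, up to the evident degree bookkeeping. Likewise $\op{enh}_{\op{mod}}$ carries the trivial representation $\calgd(L)$ --- the $k=0$ row --- to the structure sheaf $\widehat{C}^*(\op{enh}(L))$, directly from the construction of $\op{enh}_{\op{mod}}$, whose sections are $dR_X(J(\calgd(L)))$, identified with $\widehat{C}^*(\op{enh}(L))$ in Theorem \ref{thm:construction}. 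Because $\op{enh}_{\op{mod}}$ is a functor on representations up to homotopy that by construction respects the Chevalley--Eilenberg differentials (Theorem \ref{thm:repvb}), it already intertwines the internal differentials of the two double complexes. The only remaining point is the exterior derivatives.

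This is the crux. On the $\L8$-space side, $d_{dR} \co \widehat{C}^*(\op{enh}(L)) \to \Omega^1_{\op{enh}(L)}$ is by definition the universal $\Omega^*_X$-linear derivation of $\widehat{C}^*(\op{enh}(L))$, and $d_{dR}$ on higher forms is its standard derivation-extension. On the Lie algebroid side, Arias Abad--Crainic's differential $\calgd(L) \to \calgd(L,(\op{adj}\,L)^\vee)$ is itself a derivation --- ``a kind of exterior derivative from functions to $1$-forms,'' realized via their ``double'' construction, which is precisely the Lie-algebroid avatar of the identification of the Weil algebra of an ordinary Lie algebra with the de Rham complex of $C^*(\fg)$ recalled above. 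Applying the symmetric monoidal functor $\op{enh}_{\op{mod}}$ to this derivation produces an $\Omega^*_X$-linear derivation $\widehat{C}^*(\op{enh}(L)) \to \Omega^1_{\op{enh}(L)}$, which therefore factors uniquely through $d_{dR}$; to see it equals $d_{dR}$ it suffices to check agreement on the $\Omega^*_X$-algebra generators of $\widehat{C}^*(\op{enh}(L))$, namely the $\infty$-jets of sections of $L^\vee$, which come from $\calgd(L)$ along the quasi-isomorphism $j_\infty$ of Theorem \ref{thm:construction}; there both derivations are tautologically the composite of $j_\infty$ with the exterior derivative of $\calgd(L)$. Extending to higher forms is then formal, and totalizing the resulting isomorphism of double complexes gives the proposition.

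The place I expect the real work to lie is exactly this last compatibility: one must verify that Arias Abad--Crainic's connection-dependent formula for their vertical differential genuinely is the exterior derivative in the homotopically correct sense, and that the jet-bundle machinery underlying $\op{enh}_{\op{mod}}$ --- with its auxiliary choice of splitting --- does not secretly perturb it. The universality argument above is designed to finesse this: one never computes the differential, only uses that it is \emph{a} derivation landing in a model for the Kähler differentials, together with the fact that $j_\infty$ is a map of commutative dg $\Omega^*_X$-algebras.
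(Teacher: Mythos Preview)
Your overall strategy matches the paper's: use symmetric monoidality of $\op{enh}_{\op{mod}}$ together with Proposition \ref{prop:tangent} to match the columns of the two double complexes, observe that the internal differentials are automatically intertwined, and then reduce to showing that $\op{enh}_{\op{mod}}$ carries the Arias Abad--Crainic vertical differential $d^{ver}$ to the de Rham differential of $\op{enh}(L)$, exploiting that both are derivations.

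The gap is in your final reduction. The $\infty$-jets $j_\infty(s)$ of sections $s \in \calgd(L)$ are \emph{not} $\Omega^*_X$-algebra generators of $\widehat{C}^*(\op{enh}(L))$. Under the splitting isomorphism $\widehat{C}^*(\op{enh}(L))^\sharp \cong \Omega^\sharp_X \otimes \csym(T_X^\vee \oplus L^\vee[-1])$, the algebra generators are the linear sections of $T_X^\vee \oplus L^\vee[-1]$, whereas $j_\infty(s)$ is a full formal power series in the $T_X^\vee$-directions. The image of $j_\infty$ is the subalgebra of \emph{horizontal} sections for the Grothendieck connection; it is dense for the adic filtration but does not algebraically generate. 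Your argument can be repaired by noting that both derivations are filtration-preserving (hence continuous for the adic topology) and agree on this dense subalgebra, or---more structurally---that both commute with the Grothendieck connection and are therefore determined by their restriction to horizontal sections. Either way, this is the ``real work'' you flag in your last paragraph, and the universality-of-$d_{dR}$ argument alone does not supply it.

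The paper closes this gap differently. It characterizes $d_{dR}$ via the identity $L_{\cX} f = \langle \cX, d_{dR} f \rangle$, observes that $d^{ver}$ satisfies the analogous identity with the adjoint/coadjoint pairing, and then invokes Proposition \ref{prop:tangent}---which identifies $T_{\op{enh}(L)}$ with $\op{enh}_{\op{mod}}(\op{adj}(L,\nabla)[1])$ \emph{as dg Lie algebras}, hence intertwining the Lie derivatives and the evaluation pairings---to conclude $\op{enh}_{\op{mod}}(d^{ver}) = d_{dR}$. The paper also offers, as an alternative, a direct local check with a flat connection on $L$, where the Arias Abad--Crainic formulas collapse. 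Both routes avoid the density issue entirely.
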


\begin{proof}
Recall that the functor $\op{enh}_{\op{mod}}$ is symmetric monoidal and also the construction of the dual of a representation up to homotopy. Combining these, we find that  
\[
\op{enh}_{\op{mod}}(\Sym^q(\op{adj}^\vee(L,\nabla))[-p]) \cong \widehat{C}^*(\op{enh}(L), \Sym^p(T^\vee_{\op{enh}(L)})[-p]).
\]
It remains to verify that $\op{enh}_{\op{mod}}$ sends the ``vertical differential'' $d^{ver}$, in the sense of Arias Abad-Crainic, to the de Rham differential $d_{dR}$ for~$\widehat{C}^*(\op{enh}(L))$.

This vertical differential is a derivation, as is the de Rham differential of $\widehat{C}^*(\op{enh}(L))$, so it suffices to understand how they behave on the generators. In other words, we only need to show that 
\[
\begin{array}{cccc}
\op{enh}_{\op{mod}}(d^{ver}) \co & \op{enh}_{\op{mod}}(\calgd(L)) & \to & \op{enh}_{\op{mod}}(\op{adj}^\vee(L)) \\
& \parallel & & \parallel\\
& \widehat{C}^*(\op{enh}(L)) & & \widehat{C}^*(\op{enh}(L), T^\vee_{\op{enh}(L)})
\end{array}
\]
is the universal derivation on $\widehat{C}^*(\op{enh}(L))$. To show this, we use the characterizing property that 
\[
L_{\cX} f = \langle \cX, d_{dR} f \rangle
\]
for $f$ an element of $\widehat{C}^*(\op{enh}(L))$, $\cX$ a derivation on $\widehat{C}^*(\op{enh}(L))$ (i.e., a section of the tangent bundle), $L_\cX$ the Lie derivative, and $\langle -,-\rangle$ the evaluation pairing between vector fields and one-forms. On the level of representations up to homotopy, the vertical differential satisfies the analogous relation 
\[
L_{\cX} f = \langle \cX, d^{ver} f \rangle
\]
for $f$ an element of $\calgd(L)$, $\cX$ an element of the adjoint complex, $L_\cX$ the Lie derivative, and $\langle -,-\rangle$ the evaluation pairing between the adjoint and coadjoint complexes. (Verification is easiest using the invariant---i.e., connection-independent---description of the (co)adjoint complexes in Section 3.2 and Example 4.7 of \cite{AbadCrainic}.) By Proposition \ref{prop:tangent}, we see that $\op{enh}_{\op{mod}}$ intertwines the Lie derivative at the level of the adjoint complex with the Lie derivative at the $\L8$-space level. Hence, $\op{enh}_{\op{mod}}(d^{ver})~=~d_{dR}$.

Alternatively, since the Weil complexes are isomorphic (not just quasi-isomorphic) for any choice of connection on $L$, we can check locally using a convenient connection. Let $U$ be a coordinatized open subset of $X$ on which $L$ is trivializable, and fix a frame on $L$ and let $\nabla$ be the associated flat connection on $L$. In that case, the formulas in Remark 5.2 of \cite{AbadCrainic} simplify tremendously. In particular, the vertical differential from $\calgd(L)$ to the coadjoint complex is, in degree zero, the exterior derivative from $\cinf(X)$ to $\Omega^1(X)$ and, in degree one, the identity from $\mathsf{C}^1(L)$ to the copy of $\Gamma(U,L^\vee)$ in the degree 1 component of the coadjoint complex. After applying $\op{enh}$, one finds precisely the universal derivation, because this trivialization of the jet bundles produces a dg Lie algebra over the de Rham complex of $X$, with no complicated higher brackets.
\end{proof}

This identification implies the following corollaries, among many others, due to the work of Arias Abad and Crainic.

\begin{cor}[Proposition 5.1 \cite{AbadCrainic}]
The cohomology of the de Rham complex of $\op{enh}(L)$ is isomorphic to the de Rham cohomology of the underlying manifold~$X$.
\end{cor}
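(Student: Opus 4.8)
The plan is to read this off from Proposition~\ref{prop:weil} together with Proposition 5.1 of \cite{AbadCrainic}; the only extra point to verify is that $\op{enh}_{\op{mod}}$ does not change cohomology. I would start by recalling, from the construction in Theorem~\ref{thm:repvb}, that for a representation up to homotopy $\calgd(L,E)$ the sheaf of sections of the vector bundle $\op{enh}_{\op{mod}}(\calgd(L,E))$ on $\op{enh}(L)$ is exactly $dR_X(J(\calgd(L,E)))$, and that the jet map
\[
j_\infty \co \calgd(L,E) \longrightarrow dR_X(J(\calgd(L,E)))
\]
(``send a section to its $\infty$-jet'') is a quasi-isomorphism, by the formal Poincar\'e lemma for jet bundles used in Theorem~\ref{thm:construction}(3) (cf.\ Proposition~\ref{prop:grconn}). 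Thus passing from a representation up to homotopy to the associated vector bundle on $\op{enh}(L)$ and taking global sections preserves cohomology.

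Next I would apply this to the representation up to homotopy given by the totalized Weil complex $\cW(L,\nabla)$. By Proposition~\ref{prop:weil} the de Rham complex $\Omega^{\ast}_{\op{enh}(L)}$ is $\op{enh}_{\op{mod}}(\op{Tot}\,\cW(L,\nabla))$, so its complex of global sections is $dR_X(J(\op{Tot}\,\cW(L,\nabla)))$, which receives the quasi-isomorphism $j_\infty$ from $\op{Tot}\,\cW(L,\nabla)$. Combining with Proposition 5.1 of \cite{AbadCrainic}, which identifies $H^{\ast}(\op{Tot}\,\cW(L,\nabla))$ with $H^{\ast}_{dR}(X)$, yields
\[
H^{\ast}\bigl(\Omega^{\ast}_{\op{enh}(L)}\bigr)\;\cong\;H^{\ast}\bigl(\op{Tot}\,\cW(L,\nabla)\bigr)\;\cong\;H^{\ast}_{dR}(X),
\]
as desired.

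The one place that needs care is that the totalization here is of an unbounded bicomplex, so one should check that $j_\infty$, applied termwise, genuinely assembles into a quasi-isomorphism of totalizations. The cleanest way is to observe that $\op{Tot}\,\cW(L,\nabla)$ is finite-dimensional in each cohomological degree (since for a Lie algebroid $L$ sits in degree $0$, so $L^\vee[-1]$, $T^\vee_X[-1]$, and all the symmetric powers appearing live in nonnegative degrees bounded by the total degree); alternatively, filter both sides by de Rham degree, note that $j_\infty$ restricts to a quasi-isomorphism on the degree-$q$ piece $\widehat{C}^{\ast}(\op{enh}(L),\Sym^q(T^\vee_{\op{enh}(L)})[-q])$ coming from the corresponding piece of the Weil complex, and invoke the comparison of the resulting spectral sequences (the filtrations being complete and exhaustive, the isomorphism on $E_1$ suffices). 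Either route gives the corollary; as in the Lie-algebra discussion preceding Proposition~\ref{prop:weil}, one could also bypass \cite{AbadCrainic} by filtering $\Omega^{\ast}_{\op{enh}(L)}$ by internal degree and applying the Poincar\'e lemma for polynomials fibrewise, which collapses the de Rham complex of $\op{enh}(L)$ onto the de Rham complex of $X$.
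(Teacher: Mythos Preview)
Your proposal is correct and matches the paper's (entirely implicit) argument: the paper simply states that ``this identification implies the following corollaries,'' meaning Proposition~\ref{prop:weil} plus the cited result of Arias Abad--Crainic, and you have correctly supplied the one missing ingredient, namely that $\op{enh}_{\op{mod}}$ preserves cohomology via the $j_\infty$ quasi-isomorphism of Proposition~\ref{prop:grconn}. Your care with the totalization is warranted and handled appropriately.
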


\begin{cor}[Proposition 5.5 \cite{AbadCrainic}]
Let $X$ be a $\fg$-manifold for some finite-dimensional Lie algebra $\fg$. Let $\fg \rtimes X$ denote the associated Lie algebroid: the vector bundle is the trivial $\fg$-bundle on $X$, equipped with the canonical flat connection, and the anchor map is the action map $\rho\co \fg \to \Gamma(X,T_X)$. Then the de Rham complex of $\op{enh}(\fg \rtimes X)$ is the image under $\op{enh}_{\op{mod}}$ of the Kalkman's BRST complex~$W(\fg,X)$.
\end{cor}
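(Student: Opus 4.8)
The plan is to deduce this corollary from Proposition~\ref{prop:weil} together with Arias Abad--Crainic's explicit identification of the Weil algebra of an action Lie algebroid. First I recall what Proposition~\ref{prop:weil} gives: for any connection $\nabla$ on the underlying bundle of a Lie algebroid $L$, the functor $\op{enh}_{\op{mod}}$ carries the (totalized) Weil complex $\cW(L,\nabla)$ --- assembled from the representations up to homotopy $\Sym^q(\op{adj}^\vee(L,\nabla))[-q]$ with the de Rham differential --- to the de Rham complex of $\op{enh}(L)$. Since different choices of $\nabla$ give isomorphic (not merely quasi-isomorphic) Weil complexes, I am free to pick whichever connection is convenient.

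Next I specialize to $L = \fg \rtimes X$ and take $\nabla$ to be the canonical flat connection $\nabla^{\mathrm{can}}$ on the trivial bundle $X \times \fg$. This is exactly the connection appearing in Proposition 5.5 of \cite{AbadCrainic}, where it is proved that for a $\fg$-manifold $X$ the Weil algebra $\cW(\fg \rtimes X, \nabla^{\mathrm{can}})$ \emph{is} Kalkman's BRST complex $W(\fg, X)$. Concretely, flatness kills the curvature term in the vertical differential $d^{\mathrm{ver}}(a,b) = (\nabla^{\mathrm{can}} a - R_{\nabla^{\mathrm{can}}} b,\ \nabla^{\mathrm{can}} b + a)$, and the horizontal differential --- built from $\calgd(\fg \rtimes X)$ in row zero and the coadjoint representation in row one --- unwinds into the usual Weil-model differential together with the contraction and Lie-derivative terms coming from the $\fg$-action on $X$, i.e., Kalkman's operator.

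Putting these together:
\[
\op{enh}_{\op{mod}}\bigl( W(\fg, X) \bigr) \;\cong\; \op{enh}_{\op{mod}}\bigl( \cW(\fg \rtimes X, \nabla^{\mathrm{can}}) \bigr),
\]
and the right-hand side is the de Rham complex of $\op{enh}(\fg \rtimes X)$ by Proposition~\ref{prop:weil}. This is precisely the assertion of the corollary.

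The step requiring the most care --- but also the one I would simply cite rather than redo --- is the middle one: reconciling Kalkman's BRST complex, presented as a Weil-type model for equivariant de Rham cohomology (with its operator split into a Chevalley--Eilenberg piece, a Koszul piece, and a contraction piece), with Arias Abad--Crainic's bicomplex $\cW(\fg \rtimes X,\nabla)^{p,q}$ and its horizontal/vertical decomposition, including all signs and degree shifts. This matching is exactly the content of Proposition 5.5 of \cite{AbadCrainic}; transporting it through the symmetric monoidal functor $\op{enh}_{\op{mod}}$ and invoking Proposition~\ref{prop:weil} is then purely formal.
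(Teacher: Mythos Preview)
Your proposal is correct and matches the paper's approach exactly: the corollary is stated without proof in the paper, as it follows immediately by combining Proposition~\ref{prop:weil} with Arias Abad--Crainic's identification (their Proposition~5.5) of $\cW(\fg \rtimes X, \nabla^{\mathrm{can}})$ with Kalkman's BRST complex. Your write-up simply makes this two-step deduction explicit.
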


\section{Symplectic structures}

The notion of higher symplectic geometry appears in the physics literature as part of the BRST and BV formalisms. Mathematically, there are many approaches, although of most relevance to our results in this section is work of Roytenberg \cite{Roytenberg}, \v{S}evera \cite{Severa}, and \v{S}evera and Weinstein \cite{SW}.
Recently, Pantev, To\"en, Vaqui\'e, and Vezzosi \cite{PTVV} have developed symplectic geometry in the setting of derived algebraic geometry. Here we will provide a bridge between these two mathematical approaches.

In this section we show that the notion of an $n$-shifted symplectic form on $\L8$ spaces is compatible with and extends existing definitions of $n$-symplectic Lie algebroids.  Our explicit comparison is with the formulation of Roytenberg \cite{Roytenberg} in terms of dg manifolds. (We find the lectures of Cattaneo and  Sch\"{a}tz \cite{CatSchat} to be a lucid and efficient exposition of graded and dg manifolds and symplectic structures thereon.)

\subsection{Brief recollections on dg manifolds}

Recall that a {\em graded manifold} is a graded-ringed space $(X,\cA)$ where the underlying topological space $X$ is a smooth manifold and $\cA$ is a sheaf of $\ZZ$-graded commutative algebras locally of the form $U \mapsto \Gamma(U,\Sym E)$, where $E \to U$ is a $\ZZ$-graded vector bundle. A {\em map of graded manifolds} $F = (f,\psi) \co (X,\cA) \to (Y,\cB)$ is a smooth map $f \co X \to Y$ and a map of sheaves of graded $f^{-1}\cinf_Y$-algebras $\psi \co f^{-1} \cB\to \cA$. A {\em vector field} $\fX$ on $(X,\cA)$ is a graded derivation of~$\cA$.

The central example is the graded manifold associated to a graded vector bundle $E \to X$: it is $(X, \cS {\rm ym}_{\cinf}(\cE^\vee))$, where $E^\vee \to X$ denotes the dual vector bundle and $\cE^\vee$ denotes its sheaf of sections. In fact, every graded manifold is isomorphic to a graded manifold coming from a graded vector bundle. We will abusively denote by $E$ the graded manifold arising from the vector bundle~$E~\to~X$.

A {\em dg manifold} is a triple $(X,\cA,Q)$, where $(X,\cA)$ is a graded manifold and $Q$ is a degree 1 vector field on $\cA$ such that $[Q,Q] = 0$. (Such a $Q$ is typically called a ``(co)homological vector field.'') A {\em map of dg manifolds} $F = (f,\psi) \co (X,\cA,Q) \to (Y,\cB,R)$ is a map of graded manifolds such that $\psi$ is cochain map (i.e., $Q\circ\psi=\psi\circ~R$). 

We have already encountered an important class of examples. A Lie algebroid $\rho \co L \to T_X$ produces a dg manifold $(X, \calgd(L))$, where we have compressed notation with $\cA = \calgd(L)^\sharp$ and $Q$ is the differential on the Chevalley-Eilenberg complex of $L$. We denote this dg manifold by $X/L$ as its dg algebra of functions is the derived invariants of $\cinf_X$ with respect to the action of $L$. As Va\u{\i}ntrob \cite{Vaintrob} observed, a Lie algebroid structure on a (ungraded) vector bundle $L \to X$ is equivalent to a dg manifold structure on the graded vector bundle $L[1] \to X$: the cohomological vector field is precisely the differential of the putative $\calgd(L)$ and hence encodes the bracket. 

Another important example of a dg manifold is the {\em de Rham space} $\cX_{dR}$ of a graded manifold $\cX = (X,\cA)$. Suppose, without loss of generality, that $\cX$ corresponds to the graded vector bundle $E \to X$. Then the tangent bundle $T\cX$ corresponds to the graded vector bundle $(T_X \oplus E) \oplus E \to X$. Now, the underlying graded manifold of $\cX_{dR}$ is $T[1]\cX$. As $\cA = \cS {\rm ym}_{\cinf}(\cE^\vee)$, the structure sheaf of $\cX_{dR}$ is 
\[
\cS {\rm ym}_{\cinf}(\cE^\vee \oplus \cT^\vee[-1] \oplus \cE^\vee[-1]) \cong \cS{\rm ym}_{\cA}(\Omega^1_\cA[-1]),
\]
where $\Omega^1_\cA = \Omega^1_X \oplus \cE^\vee$ denotes the sheaf of one-forms for $\cA$. There is a natural degree 1 derivation $d_{dR} \co \cA \to \Omega^1_\cA$ that extends the de Rham differential on $\cinf$; in local homogeneous coordinates $\{x_i\}$, we have the standard formula $d_{dR} = \sum_i dx_i \frac{\partial}{\partial x_i}$. Then $\cX_{dR}$ is the dg manifold $(X, \cS{\rm ym}_\cA(\Omega^1_\cA[-1]), d_{dR})$, where we extend $d_{dR}$ as a derivation to the symmetric algebra.

For $\cX = (X,\cA,Q)$ a dg manifold, it is possible to equip the graded vector space of one-forms $\Omega^1_\cA$ with a natural differential $Q_1$ determined by the requirement that $Q_1 \circ d_{dR} = d_{dR} \circ Q$. This construction amounts to taking the K\"ahler differentials of the dg algebra $(\cA,Q)$, except that we require it to play nicely with smooth functions on $X$, which is not a purely algebraic condition. (In other words, it is the K\"ahler differentials as a dg $\cinf$-ring.) The de Rham space of $\cX$ is then  $(X, \cS{\rm ym}_\cA(\Omega^1_\cA[-1]), d_{dR}+Q_1)$. We will use  this version of the de Rham complex of $(\cA,Q)$, but it seems not to be wholly standard in the dg manifold literature (at least it does not appear in \cite{Roytenberg,CatSchat}).

\begin{prop}
For the dg manifold $X/L$ associated to a Lie algebroid $L$ on $X$, the sheaf of one-forms $\Omega^1_{\calgd(L)}$ is naturally isomorphic to the coadjoint complex of $L$. Moreover, the de Rham complex of $\calgd(L)$ is naturally isomorphic to (the totalization of) the Weil complex of $L$.
\end{prop}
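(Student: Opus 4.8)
The plan is to compute the K\"ahler differentials of the dg $\cinf_X$-ring $(\calgd(L)^\sharp, d_L)$ directly and recognize them, with their differential, as the coadjoint complex; the second claim then follows from unwinding the ``$\Sym$ of the shifted cotangent'' definition of the de Rham complex. For the first claim I would start from the fact that $\calgd(L)^\sharp$ is freely generated as a graded-commutative $\cinf_X$-algebra by the locally free module $\cL^\vee[-1]$, so the module of K\"ahler differentials of $\calgd(L)$ \emph{as a dg $\cinf_X$-ring} has underlying graded $\calgd(L)^\sharp$-module $\calgd(L)^\sharp\otimes_{\cinf_X}(\Omega^1_X\oplus\cL^\vee[-1])$, with a choice of connection $\nabla$ on $L$ splitting the short exact sequence relating it to the relative K\"ahler differentials $\Omega^1_{\calgd(L)/\cinf_X}$. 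Up to the standard degree shifts this is the underlying graded module of the coadjoint complex of \cite{AbadCrainic}, whose fiber is the graded dual of $T_X[-1]\oplus L^\sharp$. Matching the differential $Q_1$ --- characterized by $Q_1\circ d_{dR} = d_{dR}\circ d_L$ --- with the coadjoint differential is then the content of the first claim; unwound on the two families of generators in the trivialization provided by $\nabla$, $Q_1$ reproduces the formula $(\nabla a - R_\nabla b,\ \nabla b + a)$ for the conjugated coadjoint differential on $\cW^{1,*}(L,\nabla)$ from Remark 5.2 of \cite{AbadCrainic}, the curvature $R_\nabla$ appearing exactly because $\nabla$ need not be flat. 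A cleaner, connection-free argument uses Lemma \ref{cf der with der}: the graded $\RR$-linear derivations of $\calgd(L)$ are the multiderivations $\cD er^*(L^\sharp)$, i.e.\ a shift of the adjoint complex $\op{adj}(L,\nabla)$; since $\Omega^1_{\calgd(L)}$ is finitely generated and projective over $\calgd(L)^\sharp$, it is the $\calgd(L)^\sharp$-linear dual of this module of derivations, hence a shift of the coadjoint complex carrying the dual differential.

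For the second claim, the de Rham complex of the dg manifold $X/L$ is by definition $(X,\ \Sym_{\calgd(L)^\sharp}(\Omega^1_{\calgd(L)}[-1]),\ d_{dR}+Q_1)$. Plugging in the description of $\Omega^1_{\calgd(L)}$ from the first claim, the underlying graded algebra becomes the symmetric algebra over $\cinf_X$ on $\cL^\vee[-1]\oplus\Omega^1_X[-1]\oplus\cL^\vee[-2]$, and weighting by the total number of $\cL^\vee$-type generators and by the total de Rham weight (number of $\Omega^1_{\calgd(L)}$-factors) reproduces the bigraded pieces $\cW^{p,q}(L,\nabla) = \bigoplus_k\Gamma(X,\Lambda^{p-k}L^\vee\otimes\Lambda^{q-k}T^\vee_X\otimes\Sym^k L^\vee)$ of Arias Abad--Crainic. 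It remains to split the total differential into the two Weil differentials. The derivation $Q_1$ raises the $\cL^\vee$-weight; being a derivation it is determined on generators, where on the row $\cW^{*,0}=\calgd(L)$ it is $d_L$ and on the row $\cW^{*,1}$ it is the coadjoint differential from the first claim --- this is Arias Abad--Crainic's horizontal differential. The derivation $d_{dR}$ raises the de Rham weight and is the unique square-zero derivation restricting on $\cW^{*,0}=\calgd(L)$ to the universal derivation $\calgd(L)^\sharp\to\Omega^1_{\calgd(L)}$; the text already records that Arias Abad--Crainic's vertical differential is precisely such a derivation valued in one-forms, so the two agree. Hence $d_{dR}+Q_1$ is the total differential of the Weil bicomplex.

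The step I expect to be the main obstacle is matching the differentials in the presence of a non-flat connection: Arias Abad--Crainic's explicit formulas on $\cW^{1,*}(L,\nabla)$ and on higher rows carry curvature corrections, and one must verify these are exactly what the intrinsic differential $d_{dR}+Q_1$ becomes after trivializing by $\nabla$. Phrasing the comparison intrinsically --- both sides are ``the de Rham complex of the cdga $\calgd(L)$'', with the connection only fixing a concrete model --- removes most of the bookkeeping, but one still needs the characterizing relation $Q_1 d_{dR}=d_{dR} d_L$ and the uniqueness of the exterior derivative to conclude the two models coincide, together with some care about the degree conventions relating $T_X[-1]\oplus L^\sharp$ to $\Omega^1_X\oplus\cL^\vee[-1]$. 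It is worth noting that this proposition is the un-resolved counterpart of Proposition \ref{prop:weil}, which identifies the de Rham complex of the Fedosov/jet resolution $\op{enh}(L)$ with $\op{enh}_{\op{mod}}$ of the Weil complex; one could alternatively deduce the present statement by transporting Proposition \ref{prop:weil} along the quasi-isomorphism $\calgd(L)\hookrightarrow\widehat{C}^*(\op{enh}(L))$ of Theorem \ref{thm:construction}, at the cost of the strictness of the isomorphism.
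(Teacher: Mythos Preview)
Your proposal is correct and follows essentially the same architecture as the paper's proof: identify the underlying graded module of $\Omega^1_{\calgd(L)}$ with that of the coadjoint complex, then match the differentials, and deduce the second claim by identifying the bigraded pieces and splitting the total differential into horizontal and vertical parts. Your bigrading analysis and the identification of $Q_1$ with the horizontal differential and $d_{dR}$ with the vertical differential are accurate.

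The one place where the paper is more efficient than your approach concerns exactly the obstacle you flag. You worry about matching the curvature corrections in the formula $(\nabla a - R_\nabla b,\ \nabla b + a)$ for a general connection $\nabla$. The paper sidesteps this entirely: since different choices of connection on $L$ induce \emph{canonically isomorphic} coadjoint complexes (not merely quasi-isomorphic), it suffices to check the differential locally for a single convenient connection. On a coordinate patch $U$ where $L$ trivializes, one takes the flat connection associated to a frame; then $R_\nabla = 0$, the formulas in Remark~5.2 of \cite{AbadCrainic} collapse, and the match with $d_{dR}+Q_1$ is immediate by inspection. This is the same trick used in the final paragraph of the proof of Proposition~\ref{prop:weil}. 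Your connection-free argument via Lemma~\ref{cf der with der} and duality is a valid alternative that buys intrinsic-ness, and your proposed transport along the quasi-isomorphism of Theorem~\ref{thm:construction} would also work at the cost of strictness, as you note; the paper's local-flat-connection trick is simply the shortest path.
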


This de Rham complex itself determines a dg manifold $(X, \Omega^*_{X/L})$. 
In light of Corollary \ref{cor:dgversion} and Proposition \ref{prop:weil}, 
we immediately obtain the following corollary.

\begin{cor}\label{cor:derhamofX/L}
For a Lie algebroid $L$ on $X$, the de Rham complex of the dg manifold $X/L$ is mapped by $\op{enh}$ to the de Rham complex of the $\L8$ space~$\op{enh}(L)$.
\end{cor}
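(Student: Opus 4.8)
The plan is to check that both objects named in the statement are, as commutative dg $\Omega^\ast_X$-algebras, equal to $dR_X(J(\cW(L,\nabla)))$, the algebra obtained from the Weil complex of $L$ by taking $\infty$-jets and then the de Rham complex of the canonical (Grothendieck) flat connection; the corollary is then obtained by stringing together Corollary~\ref{cor:dgversion} and Proposition~\ref{prop:weil}.

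First I would identify the left-hand side. By the preceding proposition, the structure sheaf $\Omega^\ast_{X/L}$ of the dg manifold $(X,\Omega^\ast_{X/L})$ is (the totalization of) the Weil complex $\cW(L,\nabla)$. This commutative dg algebra is the Chevalley--Eilenberg complex $\calgd(L')$ of a dg Lie algebroid $L'$ on $X$ of total finite rank: under Va\u{\i}ntrob's correspondence, the dg manifold $(X,\Omega^\ast_{X/L})$ is the dg manifold of the tangent (``double'') Lie algebroid of $X/L$, whose underlying graded bundle is $L\oplus T_X\oplus L[1]$ with the evident anchor to $T_X$. (Alternatively one invokes the extension of $\op{enh}$ to an arbitrary dg manifold via the same jet-and-de-Rham recipe, noted in the remark after Corollary~\ref{cor:dgversion}.) Either way, applying Corollary~\ref{cor:dgversion} to $L'$ produces an $\L8$ space $\op{enh}(L')$, which is by construction the value of $\op{enh}$ on $(X,\Omega^\ast_{X/L})$, with
\[
\widehat{C}^\ast\bigl(\op{enh}(L')\bigr)\;\cong\;dR_X\bigl(J(\calgd(L'))\bigr)\;=\;dR_X\bigl(J(\cW(L,\nabla))\bigr)
\]
as commutative dg $\Omega^\ast_X$-algebras.

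Next I would identify the right-hand side. By Proposition~\ref{prop:weil} the de Rham complex $\Omega^\ast_{\op{enh}(L)}$ of the $\L8$ space $\op{enh}(L)$ is the image under $\op{enh}_{\op{mod}}$ of the Weil complex of $L$, and from the proof of Theorem~\ref{thm:repvb} the vector bundle $\op{enh}_{\op{mod}}(\calgd(L,E))$ has sheaf of sections $dR_X(J(\calgd(L,E)))$. As a graded $\calgd(L)$-module the Weil complex decomposes by symmetric powers as $\bigoplus_q\calgd\bigl(L,\Sym^q(\op{adj}^\vee(L,\nabla))[-q]\bigr)$, with $\op{enh}_{\op{mod}}$ carrying the $q$-th summand to $\Omega^q_{\op{enh}(L)}=\widehat{C}^\ast\bigl(\op{enh}(L),\Sym^q(T^\vee_{\op{enh}(L)})[-q]\bigr)$; since $J$ and $dR_X$ respect this decomposition, the de Rham complex of $\op{enh}(L)$ is again $dR_X(J(\cW(L,\nabla)))$. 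Comparing the two paragraphs shows that $\op{enh}$ carries the de Rham complex of $X/L$ to that of $\op{enh}(L)$. As with $\op{enh}$ itself and with the Arias Abad--Crainic Weil algebra, the connections used here enter only up to canonical isomorphism, so the statement is independent of the choices.

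The content beyond citing the two results is the bookkeeping that these isomorphisms respect the full Weil differential simultaneously---its Chevalley--Eilenberg part $d_L$, its de Rham (``identity'') part, and the remaining connection-dependent terms---and, in particular, that the de Rham piece is carried by $\op{enh}_{\op{mod}}$ to the de Rham differential of $\widehat{C}^\ast(\op{enh}(L))$; this last point is exactly what was verified degree by degree in the proof of Proposition~\ref{prop:weil}, while the other pieces are intertwined automatically by functoriality of $J$ and $dR_X$ for differential operators. The main (mild) obstacle is thus the first step: making sense of ``$\op{enh}$ applied to the de Rham complex of $X/L$'' by recognizing that dg manifold as $\calgd$ of a dg Lie algebroid---the tangent Lie algebroid of $X/L$---to which Corollary~\ref{cor:dgversion} applies directly. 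This is standard, being Va\u{\i}ntrob's correspondence for the de Rham space $(X/L)_{dR}$.
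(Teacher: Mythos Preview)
Your proposal is correct and follows essentially the same route as the paper. The paper treats the corollary as immediate from Corollary~\ref{cor:dgversion} and Proposition~\ref{prop:weil} (together with the preceding proposition identifying $\Omega^\ast_{X/L}$ with the Weil complex), and your write-up is simply an explicit unpacking of that sentence: both sides are identified with $dR_X(J(\cW(L,\nabla)))$, the left via the dg-manifold extension of $\op{enh}$ and the right via the construction of $\op{enh}_{\op{mod}}$. Your remark that the only nontrivial step is interpreting ``$\op{enh}$ applied to the de Rham complex of $X/L$'' is exactly the point, and the paper handles it (implicitly) via the remark after Corollary~\ref{cor:dgversion} that the jet-and-de-Rham recipe applies to arbitrary dg manifolds---which you also note as the alternative to invoking the tangent dg Lie algebroid explicitly.
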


\begin{proof}[Proof of proposition]
The underlying graded vector bundles on $X$ are the same: the cotangent bundle for $X/L$ has underlying graded vector bundle $T^\vee_X \oplus L^\vee[-1]$, just like the coadjoint complex. This identification extends to the higher wedge powers. It thus remains to relate the differentials. Since different choices of connection on $L$ induce canonically isomorphic coadjoint complexes, we check locally using a convenient choice. (Compare to the final paragraph of the proof of Proposition \ref{prop:weil}.) Fix coordinates on some small open $U$ in $X$ and fix a frame for $L$ (and the dual frame for $L^\vee$) on this open $U$. Equip $L$ and $L^\vee$ restricted to $U$ with the flat connection associated to this trivialization by the frame. Then the de Rham differential for the dg manifold $U/L$ is identical to the vertical differential of the Weil complex by inspection (see the formulas in Remark 5.2 of \cite{AbadCrainic}, which simplify drastically for a flat connection).
\end{proof}

\begin{remark}
These results are fragments of a larger story. The techniques developed in this paper apply equally well to all dg manifolds, so that a dg manifold provides an $\L8$ space via the Fedosov resolution process we've articulated. The notion of de Rham complex of a dg manifold then goes to the de Rham complex of the associated $\L8$ space.
\end{remark}

\subsection{Shifted symplectic structures}

We will now show that a symplectic structure on the dg manifold associated to a dg Lie algebroid will produce a shifted symplectic structure on its $\L8$ space. Thus standard examples, like Courant algebroids, fit into the $\L8$ space framework.

Let us begin by recalling the definition whose first explicit description is, so far as we know, due to Roytenberg. (See page 6 of \cite{Roytenberg}, just above Lemma 2.2, or Definition 4.3 of \cite{CatSchat}.) This definition is stronger than the definition we consider natural, as we will show momentarily, so we introduce a terminological distinction. 

\begin{definition}
An {\em $n$-shifted Roytenberg symplectic structure} on the dg manifold $X/L = (X, \calgd(L))$ is a 2-form $\omega$ of cohomological degree $n$ on the underlying graded manifold $(X/L)^\sharp := (X, \cS{\rm ym}(\cL^\vee[-1]))$ such that
\begin{enumerate}
\item the induced map $\omega^* \co T_{X/L^\sharp} \to T^\vee_{X/L^\sharp}[n]$ is nondegenerate,
\item $d_{dR} \omega = 0$ in the de Rham space of $(X/L)^\sharp$, and
\item the cohomological vector field $Q = d_{\calgd{L}}$ is symplectic with respect to $\omega$, i.e.,~$L_Q\omega=0$.
\end{enumerate}
\end{definition}

Let us give that definition in other terms. An $n$-shifted Roytenberg symplectic structure on the dg manifold $X/L = (X, \calgd(L))$ is an element $\eta \in \calgd(L,\Lambda^2(\op{adj}\, L)^\vee)$ of cohomological degree $n$ that is closed under the internal differential and also closed under the vertical differential of the Weil complex. (Hence it provides a closed element for the total differential of the Weil complex.) Further, $\eta$ is to be non-degenerate, i.e., $\eta$ induces an isomorphism~$\op{adj}\,L[1]\to(\op{adj}\,L)^\vee [1+n]$.

We consider the following definition more natural, by analogy to our $\L8$ space definition \cite{GGLoop} or the approach of \cite{PTVV}. Let the closed 2-forms $\Omega^{2,cl}(X/L)$ denote the totalization of the double complex
\[
\calgd(L,\Lambda^2((\op{adj}\, L)^\vee[-1])) \xto{d_{dR}} \calgd(L,\Lambda^3 ((\op{adj}\, L)^\vee[-1])) \xto{d_{dR}} \cdots,
\]
and let $i$ denote the obvious truncation to $\calgd(L,\Lambda^2(\op{adj}\, L)^\vee)$. Recall that a {\it closed 2-form} $\omega$ is a cocycle in the complex~$\Omega^{2,cl}(X/L)$. 

\begin{definition}
An {\em $n$-shifted symplectic form} on $X/L$ is a closed 2-form $\omega$ of cohomological degree $n$ such that the induced map $i(\omega) \co \op{adj}\, L[1] \to (\op{adj}\, L)^\vee [1+n]$ is a quasi-isomorphism.
\end{definition}

Explicitly, a closed 2-form $\omega$ of cohomological degree $n$ is really a sequence $\omega = (\omega_0 , \omega_1 , \dotsc )$ with $\omega_i \in \calgd(L,\Lambda^i((\op{adj}\, L)^\vee[-1]))$ of internal degree $n-i+2$ such that
\[
d_{dR} \omega_i = d_H \omega_{i+1}.
\]
Non-degeneracy is only a property of $\omega_0 = i (\omega)$, while being closed is an explicit lift of $\omega_0$ to a cocycle in $\Omega^{2,cl}(X/L)$ and hence involves specifying additional data.

Examples of such shifted symplectic forms come from Roytenberg symplectic structures.

\begin{lemma}
A Roytenberg symplectic structure of degree $n$ defines a $n$-shifted symplectic form on~$X/L$.
\end{lemma}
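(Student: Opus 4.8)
The plan is to show that a Roytenberg symplectic structure $\omega$, together with the trivial higher data, \emph{is} an $n$-shifted symplectic form on $X/L$: that the constant sequence $(\omega,0,0,\dots)$ is a cocycle in $\Omega^{2,cl}(X/L)$, and that its underlying $2$-form $i(\omega)=\omega$ induces a quasi-isomorphism $\op{adj}\,L[1]\to(\op{adj}\,L)^\vee[1+n]$. Everything follows by translating Roytenberg's three conditions through the proposition above, which identifies $\Omega^1_{\calgd(L)}$ with the coadjoint complex of $L$ and the de Rham complex of the dg manifold $X/L$ with the totalization of the Weil complex, together with the description of the internal differential $Q_1$ on one-forms recalled just before it. Under this dictionary, the double complex $C^\ast(L,\Lambda^\bullet((\op{adj}\,L)^\vee[-1]))$ whose totalization is $\Omega^{2,cl}(X/L)$ is precisely the de Rham complex of the dg manifold $X/L$: its column differential is $d_{dR}$ and the ``horizontal'' differential $d_H$ within each column is the extension of $Q_1$ to forms.

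For closedness, the observation to make is that on the forms of the dg manifold $X/L$ the Lie derivative $L_Q=[\iota_Q,d_{dR}]$ \emph{equals} the internal differential $d_H$: this holds on functions and on exact one-forms (there $L_Q(d_{dR}f)=d_{dR}(Qf)=Q_1(d_{dR}f)$), and both sides are derivations of the de Rham algebra that commute with $d_{dR}$. Hence Roytenberg's condition $L_Q\omega=0$ says exactly that $\omega$ is $d_H$-closed, while Roytenberg's condition that $d_{dR}\omega=0$ in the de Rham space of the \emph{underlying graded} manifold $(X/L)^\sharp$ says exactly that $\omega$ is $d_{dR}$-closed as a $3$-form. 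Together these say that $(\omega,0,0,\dots)$ is a cocycle in $\Omega^{2,cl}(X/L)$; notice that no higher homotopies $\omega_1,\omega_2,\dots$ are needed, which is the precise sense in which Roytenberg's notion is strictly stronger than ours. In particular $\omega$ is a closed $2$-form with underlying $2$-form $i(\omega)=\omega$.

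For nondegeneracy, under the identifications of the same proposition the bundle map $\omega^\ast\co T_{X/L^\sharp}\to T^\vee_{X/L^\sharp}[n]$ of Roytenberg's condition (1) is exactly the map $i(\omega)\co \op{adj}\,L[1]\to(\op{adj}\,L)^\vee[1+n]$ of our definition. Roytenberg's nondegeneracy says this is an isomorphism of $\ZZ$-graded vector bundles over $X$; the $d_H$-closedness established in the previous paragraph says it is a cochain map for the (co)adjoint differentials (this is the content of $L_Q\omega=0$ transported along the dictionary). A cochain map that is a degreewise isomorphism is an isomorphism of complexes, hence a quasi-isomorphism, so $i(\omega)$ is a quasi-isomorphism and $\omega$ is an $n$-shifted symplectic form on $X/L$.

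I expect the substance to be routine and the real work to be bookkeeping: pinning down the dictionary between Roytenberg's Cartan calculus on $(X/L)^\sharp$ and the Weil/(co)adjoint description, in particular that $L_Q$ on $2$-forms is genuinely $d_H$ rather than a sign or shift thereof (which uses both the Cartan identity and the ``conjugation'' convention of Arias Abad--Crainic for the row $\cW^{\ast,1}(L,\nabla)$), and that the cohomological-degree and shift conventions line up, so that a degree-$n$ Roytenberg form lands in the correct internal degree of $\Omega^{2,cl}(X/L)$ and the shifts $[1]$, $[1+n]$ on the (co)adjoint complexes are the ones forced by $\op{adj}\,L[1]=T_{\op{enh}(L)}$ (Proposition \ref{prop:tangent}) and the proposition above. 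Granting those conventions, the argument is the short chase above; combined with Corollary \ref{cor:dgversion} and Corollary \ref{cor:derhamofX/L} it then transports the conclusion to the associated $\L8$ space $\op{enh}(L)$.
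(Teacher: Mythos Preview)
Your proposal is correct and follows essentially the same approach as the paper: set $\omega=(\eta,0,0,\dots)$, observe that Roytenberg's conditions (2) and (3) say $\eta$ is closed under both differentials of the Weil double complex (hence under the total differential), and that Roytenberg's nondegeneracy is strictly stronger than the required quasi-isomorphism. The paper's proof is terser because the translation you carry out---that $L_Q$ is the internal differential $d_H$ and that $d_{dR}\omega=0$ is vertical closedness---has already been absorbed into the ``in other terms'' paragraph immediately following the definition of a Roytenberg symplectic structure; your explicit check that the induced map is a cochain map (so that a graded isomorphism is a quasi-isomorphism) is a detail the paper leaves implicit.
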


\begin{proof}
By definition a Roytenberg symplectic structure $\eta$ is concentrated in a single bidegree and is closed under both the horizontal and vertical differential, hence it is closed under the total differential.  Further, by hypothesis $\eta$ is non-degenerate.  Such an $\eta$ corresponds to a $n$-shifted symplectic form of the type $\omega=~(\eta,0,0,\dotsc).$
\end{proof}

By applying Corollary \ref{cor:derhamofX/L}, we deduce the following result from \cite{Roytenberg}.

\begin{cor}
A shifted Roytenberg symplectic structure on $X/L$ produces a shifted symplectic structure on $\op{enh}(L)$. In particular, if $L$ is a Courant algebroid over $X$, then $\op{enh}(L)$ is 2-shifted symplectic.
\end{cor}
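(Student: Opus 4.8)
The plan is to transport the symplectic datum on $X/L$ forward along the functor $\op{enh}_{\op{mod}}$, so essentially all of the work lies in checking that $\op{enh}_{\op{mod}}$ respects enough structure. Let $\eta$ be an $n$-shifted Roytenberg symplectic structure on $X/L$. By the preceding lemma it underlies an $n$-shifted symplectic form $\omega=(\eta,0,0,\dotsc)$ on $X/L$, i.e.\ a cocycle of cohomological degree $n$ in $\Omega^{2,cl}(X/L)$ --- the totalization of the double complex $\calgd(L,\Lambda^{\ge 2}((\op{adj}\,L)^\vee[-1]))$ --- whose truncation $i(\omega)=\eta$ induces a map $\op{adj}\,L[1]\to(\op{adj}\,L)^\vee[1+n]$ which is an isomorphism of the underlying graded bundles (by non-degeneracy) and a chain map (by closedness under the internal differential), hence an isomorphism of representations up to homotopy, in particular a weak equivalence of vector bundles on $X/L$. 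I want to show $\op{enh}_{\op{mod}}(\omega)$ is an $n$-shifted symplectic form on the $\L8$ space $\op{enh}(L)$, which splits into checking that it is a closed $2$-form of cohomological degree $n$ and that its underlying $2$-form is non-degenerate.

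For the first point, recall that $\op{enh}_{\op{mod}}$ is symmetric monoidal and compatible with duals, and that the proof of Proposition~\ref{prop:weil} identifies $\op{enh}_{\op{mod}}(\Sym^k(\op{adj}^\vee(L,\nabla))[-k])$ with $\widehat{C}^*(\op{enh}(L),\Sym^k(T^\vee_{\op{enh}(L)})[-k])$ and carries the vertical (de Rham) differential $d^{ver}$ to $d_{dR}$ on $\widehat{C}^*(\op{enh}(L))$. Hence $\op{enh}_{\op{mod}}$ sends the double complex computing $\Omega^{2,cl}(X/L)$ to the double complex computing $\Omega^{2,cl}_{\op{enh}(L)}$, intertwining all differentials --- this is essentially Corollary~\ref{cor:derhamofX/L}, in its dg Lie algebroid form via Corollary~\ref{cor:dgversion} --- and it plainly commutes with the truncation maps $i$ (both are the evident projections out of the totalizations). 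Being a cochain map that preserves cohomological degree, it therefore sends the degree $n$ cocycle $\omega$ to a degree $n$ cocycle $\op{enh}_{\op{mod}}(\omega)$ in $\Omega^{2,cl}_{\op{enh}(L)}$ with $i(\op{enh}_{\op{mod}}(\omega))=\op{enh}_{\op{mod}}(\eta)$.

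For non-degeneracy, Proposition~\ref{prop:tangent} gives $\op{enh}_{\op{mod}}(\op{adj}(L,\nabla)[1])\cong T_{\op{enh}(L)}$, and dualizing (again using that $\op{enh}_{\op{mod}}$ is symmetric monoidal) $\op{enh}_{\op{mod}}((\op{adj}\,L)^\vee[1+n])\cong T^\vee_{\op{enh}(L)}[n]$; moreover these identifications intertwine ``contraction with a $2$-form'' on the two sides. Since $i(\omega)=\eta$ is a weak equivalence of vector bundles on $X/L$ and $\op{enh}_{\op{mod}}$ preserves weak equivalences of vector bundles, the induced map $i(\op{enh}_{\op{mod}}(\omega))\co T_{\op{enh}(L)}\to T^\vee_{\op{enh}(L)}[n]$ is a quasi-isomorphism. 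Thus $\op{enh}_{\op{mod}}(\omega)$ is an $n$-shifted symplectic form on $\op{enh}(L)$, which proves the first assertion.

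Finally, for the Courant algebroid statement: as is well known \cite{Roytenberg,Severa}, a Courant algebroid on $E\to X$ corresponds to a degree $2$ symplectic dg manifold, whose underlying graded manifold has structure sheaf of the form $\cS{\rm ym}(\cL^\vee[-1])$ for a graded vector bundle $L$ concentrated in degrees $-1$ and $0$; the cohomological vector field makes $L$ a dg Lie algebroid (with anchor the Courant anchor of $E$), and the canonical symplectic form is a $2$-shifted Roytenberg symplectic structure on $X/L$. Applying the first assertion with $n=2$ then shows $\op{enh}(L)$ is $2$-shifted symplectic. The only genuinely delicate point in the whole argument is the coherence bookkeeping of the preceding paragraphs --- that $\op{enh}_{\op{mod}}$ simultaneously and compatibly respects the de Rham double complex, the truncation $i$, and the (co)tangent identifications of Proposition~\ref{prop:tangent} --- together with checking that the degree $2$ symplectic dg manifold of a Courant algebroid does fall under the dg Lie algebroid formalism, so that Corollary~\ref{cor:dgversion} applies; each ingredient is already in hand above, but the assembly should be spelled out rather than left implicit.
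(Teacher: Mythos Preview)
Your proposal is correct and follows the same approach as the paper, which simply states that the result is deduced from Corollary~\ref{cor:derhamofX/L} together with the preceding lemma and Roytenberg's correspondence, without spelling out the details. You have usefully unpacked the argument---in particular the roles of Proposition~\ref{prop:tangent} and the preservation of weak equivalences by $\op{enh}_{\op{mod}}$ for nondegeneracy---and your closing caveat about whether the Courant-algebroid dg manifold genuinely fits the dg Lie algebroid formalism of Corollary~\ref{cor:dgversion} is a fair point that the paper leaves implicit.
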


\begin{remark}
We expect a Darboux lemma to hold for symplectic $\L8$ spaces, possibly under some further reasonable conditions. (Derived algebraic versions appear in \cite{BBBJ,BouGroj}, where  arguments and conditions can be found that would provide a model for such a lemma.) One consequence would then be that for such spaces, every shifted symplectic structure is locally equivalent to a Roytenberg symplectic structure. 
\end{remark}

\begin{remark}
Subsequent to the posting of this paper, Pym and Safronov \cite{PS} explored shifted symplectic structures on Lie algebroids and their higher generalizations. Their approach to basic definitions (e.g., of the de Rham forms of $X/L$ and shifted symplectic forms) is also modeled on \cite{PTVV}, and it appears to agree with ours where the domains of definition overlap (essentially, dg Lie algebroids concentrated in nonpositive degrees). A key message of \cite{PS} is that Roytenberg's classification must be refined when using these more homotopically sophisticated definitions. For instance, given such a nonpositively-graded dg Lie algebroid, the $\infty$-groupoid of 2-shifted symplectic structures is equivalent to a 2-groupoid of twisted Courant algebroids. That means that a 2-shifted symplectic structure can be specified by a 2-form that is {\em strictly} closed and nondegenerate but that also involves a closed 4-form  on the underlying smooth manifold (namely, the ``twist'' of the Courant algebroid). Hence a Safronov-Pym 2-shifted symplectic structure is more general than a Roytenberg 2-symplectic structure. (See Section 5 for a precise discussion.) We expect---but do not verify here---that their results port to our context, in which case one can deduce that there are shifted symplectic structures on $\op{enh}(L)$ that do not arise from shifted Roytenberg symplectic structures. In other words, the converse to the preceding corollary is not true. 
\end{remark}

\subsection{AKSZ Theories}

Shifted symplectic structures play a central role in the AKSZ construction \cite{AKSZ}, 
a mechanism for producing classical field theories in the BV formalism.  
These theories are $\sigma$-models, i.e., the field content consists of maps between geometric entities. The paradigm operates as follows:
\begin{enumerate}
\item The first input is a {\it source} dg manifold $\Sigma$ equipped with a $d$-orientation, which produces a volume form $\mathsf{dvol}$ of degree $d$, e.g., a closed oriented $d$-manifold or a Calabi-Yau $d$-fold.
\item The other input is a {\it target} dg manifold $X$, which is equipped with a $k$-shifted symplectic structure~$\omega$.
\item The {\it space of fields}, or {\it field content}, is the mapping space $\mathrm{Map} (\Sigma, X)$, which we denote by~$\sE$.
\item The form $\mathsf{dvol}$ and the symplectic structure $\omega$ induce a $k-d$ symplectic structure on the space of fields $\sE = \mathrm{Map} (\Sigma, X)$. Explicitly, for $\varphi \co \Sigma \to X$ a map, the tangent space $T_\varphi \sE$ has a pairing of degree $k-d$ given by
\[
\langle \xi_1 , \xi_2 \rangle := \int_{p \in \Sigma} \varphi^\ast \omega (p) \left ( \xi_1 (p) , \xi_2 (p) \right ) \mathsf{dvol},  
\]
where  $\xi_1 , \xi_2 \in T_\varphi \sE = \Gamma ( \Sigma, \varphi^\ast T_X)$.
\item In the case where $k-d = -1$, the fields $\sE$ form a $-1$-symplectic space and hence defines a classical BV theory. 
 \end{enumerate}
This paradigm was used by \cite{AKSZ} to interpret several important field theories, including Chern-Simons theory and the $A$- and $B$-models of mirror symmetry. See also the more recent work of Cattaneo, Mn\"{e}v, and collaborators, e.g., Section 2 of~\cite{CM}.

This methodology can be extended beyond the setting of dg manifolds: 
in \cite{PTVV} this formalism is developed for shifted symplectic derived stacks. 
As noted in \cite{GGLoop}, the AKSZ formalism extends to shifted symplectic $L_\infty$ spaces, where it provides perturbative descriptions of these $\sigma$-models in a way compatible with the renormalization/BV package developed by Costello. 
Moreover, these perturbative theories are presented as a families of gauge theories parametrized by the target manifold. 
This methodology has been fruitfully exploited in the following recent works:
\begin{itemize}
\item The formal neighborhood of the zero section $X \hookrightarrow T^\ast X$ determines a 0-shifted symplectic $L_\infty$ space. Hence, there is an associated one-dimensional theory that is a version of (topological) quantum mechanics. This theory is quantized in \cite{GGCS} and the observable theory is described in \cite{GGW}. 
\item Any symplectic manifold $(M, \omega)$ defines a 0-shifted symplectic $L_\infty$ space. The quantization of the resulting one-dimensional theory \cite{GLL} recovers Fedosov quantization \cite{Fedosov}  and gives a new proof of the algebraic index theorem~\cite{NT}.
\item The formal neighborhood of the zero section $Y \hookrightarrow T^\ast Y$ of a complex manifold $Y$ determines a 0-shifted symplectic $L_\infty$ space. If one takes a Calabi-Yau Riemann surface as the source, one obtains an AKSZ theory known as the curved $\beta\gamma$ system. Its quantization recovers the sheaf of chiral differential operators \cite{CDO} on $Y$ and its partition function gives a mathematically rigorous interpretation of the Witten genus in terms of QFT~\cite{CosWG}. 
\item The three-dimensional theory known as Rozansky-Witten theory is realized via a 2-shifted symplectic $L_\infty$ space arising from a holomorphic symplectic manifold. See  \cite{CCLL} for the details of the $L_\infty$ space formulation and its quantization. 
\item Perturbative aspects of the Riemannian $\sigma$-model in two dimensions also fit into this paradigm. In \cite{GW} this theory is described in terms of a 1-shifted symplectic $L_\infty$ space. The $\beta$-function (at one loop) of this theory describes Ricci flow on the target manifold, as  first explained by Friedan~\cite{Friedan}. 
\end{itemize}
 
The results of {\em this} paper allow one to describe a whole slew of additional perturbative BV theories. 
For example, a Courant algebroid $(E, \rho, \langle , \rangle)$ defines a 2-shifted symplectic $L_\infty$ space.  
The quantization of the resulting three-dimensional AKSZ theory describes a low-energy effective theory for the {\it Courant $\sigma$-model} \cite{Ikeda}.
In collaboration with Brian Williams, the authors intend to quantize this theory and explore its implications. Indeed, this example nearly brings the story full circle, as much of Roytenberg's work \cite{Roy1} was motivated by giving a mathematically coherent description to classical aspects of the Courant $\sigma$-model.

\appendix

\section{Jets and connections}\label{app}

If $E \to X$ is a vector bundle on a smooth manifold, its sheaf $\cE$ of smooth sections is a module over the sheaf $\cinf_X$ of smooth functions on $X$. We will be somewhat cavalier in moving back and forth between a vector bundle $E$ and its sheaf $\cE$ of smooth sections; in general, we will use Roman script for vector bundles and calligraphic script for sheaves. We will focus on the category $\VB(X)$ whose objects are finite-rank vector bundles and whose morphisms are vector bundle maps. It has a natural symmetric monoidal structure given by the Whitney tensor product, given by fiberwise tensor product.

To simplify notation, we will suppress subscripts referring to the base manifold $X$, so $\cinf$ will mean $\cinf_X$ and $T^\vee$ will mean $T^\vee_X$, for example.

\subsection{Finite jet bundles}

For every natural number $k$, there is a bundle $J^k(E) \to X$ of {\it $k$-jets of $E$}, whose fiber at a point $x \in X$ is $\cE_x/\fm_x^{k+1}$, where $\cE_x$ denotes the stalk of $\cE$ at $x$ (i.e., the vector space of germs at $x$ of sections) and where $\fm_x$ denotes the vector space of germs at $x$ of smooth functions vanishing at $x$. Observe that~$J^0(E)=E$. 


Let $\cJ^k(E)$ denote the sheaf of smooth sections of $J^k(E)$. There is a map of sheaves $j_k \co \cE \to \cJ^k(E)$ sending a smooth section to its $k$-jet. (Note that $j_k$ does not arise from a map of vector bundles for $k > 0$.) Two smooth sections $s,s'$ ``agree to order $k$ at $x$'' if their germs $j_k(s), j_k(s')$ are the same in~$\cJ^k(E)_x$.

Jets play an important role in relation to differential operators. For instance, an order $k$ differential operator $P \co \cE \to \cF$ is a $\CC$-linear map of sheaves that factors as $\cE \xto{j_k} \cJ^k(E) \xto{\tilde{P}} \cF$, where $\tilde{P}$ is a $\cinf$-linear map of sheaves.

\subsection{The $\infty$-jet bundles}

The {\it $\infty$-jet bundle} $J(E)$ is the pro-finite-rank vector bundle $\lim_k J^k(E)$. We will work with it as an infinite-rank vector bundle with filtration $F^k J(E)$ whose quotients $J(E)/F^{k+1} J(E) \cong J^k(E)$ are finite-rank. All our constructions will respect this filtration; in other words, we work in the category of filtered vector bundles and require maps to be filtration-preserving. 

We use $\cJ(E)$ to denote the sheaf of smooth sections of $J(E)$. There is a sheaf map $j_\infty \co \cE \to \cJ(E)$ sending a smooth section to its $\infty$-jet. In local coordinates around a point $x$ and with a choice of trivialization of $E$ around $x$, this map agrees with the Taylor expansion. In other words, one should view these jet bundles as a coordinate-free way of working with Taylor expansions. 

Let $\cJ$ denote the sheaf of $\infty$-jets of smooth functions, i.e., $\cJ = \cJ(\CC)$ for the trivial rank-one bundle $\CC$. Similarly, let $\cJ^k$ denote the sheaf of $k$-jets of smooth functions. 

\subsubsection{}

The relationship of jets with differential operators provides a useful alternative construction for jets. Recall from above that $\cD_{\leq k} = \shom_\cinf(\cJ^k, \cinf)$ is the sheaf of order $k$ differential operators on $\cinf$. Let $\cD = \colim_k \cD_{\leq k}$ denote the sheaf of differential operators on $\cinf$. We see that $\cJ \cong \shom_{\cinf}(\cD,\cinf)$, where the ascending filtration on $\cD$ induces the descending filtration on~$\cJ$.

\begin{lemma}
The sheaf $\cJ$ is a sheaf of commutative algebras over~$\cinf$.
\end{lemma}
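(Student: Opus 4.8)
The plan is to obtain the product on $\cJ$ by dualizing a ``co-Leibniz'' comultiplication on the sheaf $\cD$ of differential operators, exploiting the identification $\cJ \cong \shom_\cinf(\cD,\cinf)$ recorded just above. The underlying idea is that the pointwise product of germs of functions descends to jets, and that this descended product is encoded dually by the Leibniz rule.

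Concretely, first I would construct a coproduct $\Delta\co \cD \to \cD\,\widehat{\otimes}_\cinf\,\cD$ and counit $\epsilon\co\cD\to\cinf$. For a local differential operator $P$ of order $\le k$, the assignment $(f,g)\mapsto P(fg)$ is a bidifferential operator of order $\le k$ in each argument, i.e.\ a section of $\shom_\cinf(\cJ^k\otimes_\cinf\cJ^k,\cinf)\cong\cD_{\le k}\otimes_\cinf\cD_{\le k}$; by the Leibniz rule it in fact lies in $\sum_{i+j\le k}\cD_{\le i}\otimes_\cinf\cD_{\le j}$. Taking the limit over $k$ defines $\Delta P$ in the completed tensor product $\cD\,\widehat{\otimes}_\cinf\,\cD$, and we set $\epsilon(P)=P(1)$. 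Commutativity and associativity of the pointwise product on $\cinf$ translate verbatim into cocommutativity and coassociativity of $\Delta$, with $\epsilon$ the counit; all of this is $\cinf$-linear and local, hence a statement about sheaves. Dualizing, for local sections $\varphi,\psi$ of $\cJ\cong\shom_\cinf(\cD,\cinf)$ I would set
\[
\varphi\cdot\psi := (\varphi\otimes\psi)\circ\Delta, \qquad 1_\cJ := \epsilon,
\]
where $\varphi\otimes\psi$ denotes the induced functional on $\cD\,\widehat{\otimes}_\cinf\,\cD$; filtration-compatibility of $\Delta$ guarantees the pairing converges and lands in $\cJ$ rather than a larger completion. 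Associativity, commutativity, and unitality of this product are then formal consequences of the dual properties of $(\cD,\Delta,\epsilon)$, and the $\cinf$-algebra structure is witnessed by the unit map $\cinf\to\cJ$, which is precisely $j_\infty$; one checks $j_\infty(fg)=j_\infty(f)\cdot j_\infty(g)$ straight from the definition of $\Delta$.

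The step I expect to require the most care is the bookkeeping with completed tensor products and filtrations: verifying that $\Delta$ is well-defined into $\cD\,\widehat{\otimes}_\cinf\,\cD$, is filtration-preserving, and is a morphism of sheaves rather than merely defined stalkwise. A cleaner but less self-contained alternative, which avoids the coalgebra formalism altogether, is to note that at each point $x$ the fiber $\cJ_x=\lim_k \cinf_x/\fm_x^{k+1}$ is the $\fm_x$-adic completion of the local ring of germs of smooth functions, hence a commutative $\RR$-algebra, with the bundle projections $\cJ^{k+1}\to\cJ^k$ algebra maps; a local trivialization then identifies $\cJ$ with the completed symmetric algebra of $T^\vee_X$ as a sheaf of algebras, and one checks that this product is independent of the trivialization because it is characterized by $j_\infty(fg)=j_\infty(f)\,j_\infty(g)$ together with the density of $j_\infty(\cinf)$ in each $\cJ^k$. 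Either route establishes that $\cJ$ is a sheaf of commutative $\cinf$-algebras.
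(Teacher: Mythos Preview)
Your approach is essentially the same as the paper's: both dualize the cocommutative coproduct on $\cD$ arising from the Leibniz rule (the paper phrases this as the left $\cD$-module structure on $\cD \otimes_{\cinf} \cD$ determined by $1 \mapsto 1 \otimes 1$, which is exactly your bidifferential-operator description). Your worry about completed tensor products is unnecessary, since for $P$ of order $\le k$ the element $\Delta P$ already lies in the finite sum $\sum_{i+j\le k}\cD_{\le i}\otimes_{\cinf}\cD_{\le j}\subset \cD\otimes_{\cinf}\cD$; the paper accordingly works with the ordinary tensor product throughout.
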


\begin{proof}
Recall that $\cD$ has a cocommutative coproduct $\kappa \co \cD \to \cD \otimes_{\cinf} \cD$. In brief, for every pair of left $\cD$-modules $\cM$ and $\cN$, there is a natural left $\cD$-module structure on $\cM \ot_\cinf \cN$, where
\[
X \cdot (m \ot n) = (X \cdot m) \ot n + m \ot (X \cdot n)
\]
for any vector field $X$ and any section $m$ of $\cM$ and $n$ of $\cN$. Now set $\cM = \cD = \cN$  and construct the map of left $\cD$-modules via $\kappa \co 1 \mapsto 1 \ot 1$. Explicit computation verifies $\kappa$ is a cocommutative coproduct.

There is thus a natural map
\[
\kappa^* \co \shom_{\cinf}(\cD \otimes_{\cinf} \cD,\cinf) \to \shom_{\cinf}(\cD, \cinf) = \cJ,
\]
and precomposing with the natural map
\[
\begin{array}{cccc}
\cJ \otimes_{\cinf} \cJ =& \shom_{\cinf}(\cD, \cinf) \otimes_{\cinf} \shom_{\cinf}(\cD,\cinf) & \to & \shom_{\cinf}(\cD \otimes_{\cinf} \cD, \cinf) \\
& \phi \otimes \psi & \mapsto & (P \otimes Q \mapsto \phi(P) \psi(Q)),
\end{array}
\]
we obtain a canonical map $m \co \cJ \otimes_{\cinf} \cJ \to \cJ$, which is indeed a commutative product.
\end{proof}

It will be convenient below to see this commutative algebra structure in local coordinates. Fix coordinates $\{x_1,\ldots,x_n\}$ in a small neighborhood $U$ around some point $p \in X$. Then we obtain a natural associated frame $\{\partial/\partial x_i\}$ for the tangent bundle on $U$, and hence every order $k$ differential operator has a unique expression 
\[
P = \sum_{\vec{m} \in \NN^n} f_{\vec{m}}(x) \frac{\partial^{\vec{m}}}{\partial x^{\vec{m}}} \quad\text{where}\quad\vec{m} = (m_1,\ldots,m_n) \text{ and } \sum_{i=1}^n m_i \leq k.
\]
In other words, such an operator looks like a degree $k$ polynomial in the partial derivatives, with coefficients in $\cinf(U)$. Locally, we thus see that there is an isomorphism $\cD(U) \cong {\mathcal{S}}{\rm ym}_{\cinf(U)}(\cT(U))$ as $\cinf_U$-modules (but not as algebras). As jets are the fiberwise linear dual, we see that $\cJ \cong \calsym_{\cinf}(\cT^\vee)$ as sheaves on $U$. Sections are formal power series in the dual frame $\{dx_i\}$ for the cotangent bundle. Indeed, there is a natural trivialization of the jet bundle $\cJ$ on~$U$
\[
\cJ(U) \cong \cinf(U) \otimes_\CC \CC[[x_1,\ldots,x_n]]
\]
so that the $\infty$-jet of a smooth function $\phi$ is given by
\[
\sum_{\vec{m} \in \NN^n} (\partial_{\vec{m}} \phi) x^{\vec{m}}
\]
with $\vec{m} = (m_1, \ldots, m_n)$ a multi-index. (In other words, we are just giving the pointwise Taylor expansion of $\phi$.) Multiplication in $\cJ$ locally is just multiplication of these power series.

\subsubsection{} One property will be crucial in our work. It is undoubtedly well-known to experts but does not seem to be in the literature, so we provide a proof.

Let $\Mod_\cJ^{fil}$ denote the category of filtered $\cJ$-module sheaves (i.e., possessing a filtration compatible with that on $\cJ$) and with maps the filtration-preserving $\cJ$-module maps.

\begin{prop}\label{prop:jetsm}
The $\infty$-jet functor $\cJ(-) \co \VB(X) \to \Mod_{\cinf}$ described above, sending $E$ to $\cJ(E)$, lifts to a symmetric monoidal functor $\cJ(-) \co (\VB(X),\ot)\to(\Mod^{fil}_\cJ, \ot_\cJ)$.
\end{prop}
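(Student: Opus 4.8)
The plan is to recognize the $\infty$-jet functor as extension of scalars along the prolongation map $j_\infty \co \cinf \to \cJ$, after which the proposition reduces to the standard fact that base change along a map of commutative algebra objects in a symmetric monoidal category is strong symmetric monoidal. The first and essentially only substantive step is to construct a natural isomorphism
\[
\cJ \ot_{j_\infty,\cinf} \cE \xto{\ \cong\ } \cJ(E)
\]
of $\cJ$-modules, where on the left $\cJ$ is regarded as a $\cinf$-algebra via $j_\infty$ (\emph{not} via the constant-jet inclusion). The map is the $\cJ$-linear extension of $\psi \ot s \mapsto \psi\cdot j_\infty(s)$, which is well defined precisely because $j_\infty$ is a morphism of algebras and $\cJ(E)$ is a $\cJ$-module; since $\cE$ is locally free of finite rank there is no issue with completed tensor products. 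To see it is an isomorphism I would argue locally: over a coordinate patch $U$ with a frame $(e_a)$ for $E$, the left side is free over $\cJ|_U$ on the classes $1 \ot e_a$, while $\cJ(E)|_U$, in the trivialization of $J(E)|_U$ induced by the coordinates and the frame, is free over $\cJ|_U$ on the constant jets $j_\infty(e_a)$. Under these identifications the map is the identity, hence it is a global isomorphism, and it manifestly preserves the filtrations, $F^k$ on the left being $F^k\cJ \ot_\cinf \cE$.

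Granting this identification, the rest is formal. For any morphism $R \to S$ of commutative algebra objects in a symmetric monoidal category, the functor $S \ot_R (-) \co (R\text{-}\mathrm{Mod}, \ot_R) \to (S\text{-}\mathrm{Mod}, \ot_S)$ is strong symmetric monoidal, with comparison isomorphisms $(S\ot_R M)\ot_S(S\ot_R N) \cong S \ot_R (M\ot_R N)$ and unit $S \cong S\ot_R R$; the associativity, unit, and symmetry coherences are automatic. Taking $R = \cinf$ and $S = \cJ$, and transporting along the isomorphism of the previous paragraph, one obtains a strong symmetric monoidal structure on $\cJ(-)$, with unit constraint $j_\infty \co \cinf \to \cJ = \cJ(\CC)$ and comparison map $\cJ(E)\ot_\cJ\cJ(F) \xto{\cong} \cJ(E\ot F)$. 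This comparison is filtration-preserving because it is assembled from the filtered multiplication on $\cJ$ and the degree-zero structure maps of $\cE$ and $\cF$, so the construction lands in $\Mod^{fil}_\cJ$; and restricting scalars back along the constant-jet inclusion $\cinf \hookrightarrow \cJ$ recovers the original functor $\cJ(-) \co \VB(X) \to \Mod_\cinf$, so we have produced the asserted lift.

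The only step with genuine content is the base-change identification, and the one thing one must not get wrong there is that $\cJ$ carries \emph{two} distinct $\cinf$-algebra structures: the ``source'' structure, under which $\cJ(E)$ is the sheaf of sections of a pro-finite-rank vector bundle, and the ``target'' structure $j_\infty$, along which one base-changes. The module $\cJ(E)$ is the base change along the latter; conflating the two would produce something that is not even well defined. Once this is kept straight, the local computation with a frame closes the argument, and no further input is required — in particular no choice of connection or of the splittings $\sigma_E$, since those amount to trivializing $\cJ$ itself as an algebra, which this argument deliberately avoids.
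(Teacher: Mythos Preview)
Your argument is correct and takes a genuinely different route from the paper's. The paper builds the comparison map $j_{E,F}\co \cJ(E)\ot_{\cinf}\cJ(F)\to\cJ(E\ot F)$ by hand from the cocommutative coproduct $\kappa$ on $\cD$ (so that the Leibniz rule is doing the work), then checks in coordinates that the map coequalizes the $\cJ$-action and is an isomorphism. You instead identify $\cJ(-)$ with base change along the algebra map $j_\infty\co\cinf\to\cJ$ and invoke the formal fact that extension of scalars is strong symmetric monoidal, so all coherence is automatic.

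Your approach is cleaner and makes the symmetric monoidal structure transparent; the only cost is that you take the $\cJ$-module structure on $\cJ(E)$ as given, whereas the paper produces it as a byproduct of the $\kappa$-construction. That is harmless here: the module structure is visible already from the stalkwise description $J(E)_x=\cE_x/\fm_x^{\infty}\cE_x$ over $J_x=\cinf_x/\fm_x^{\infty}$, and the compatibility $j_\infty(fs)=j_\infty(f)\cdot j_\infty(s)$ that makes your map $\cinf$-balanced is immediate. Your careful distinction between the two $\cinf$-structures on $\cJ$ (the ``source'' bundle structure versus the $j_\infty$ structure) is exactly the point one must not blur, and you handle it correctly. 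The local check that $1\ot e_a\mapsto j_\infty(e_a)$ matches bases is the same computation the paper does at the end, so the two proofs converge there.
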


In other words, for every vector bundle $E$, the sheaf $\cJ(E)$ has a canonical $\cJ$-module structure. Moreover, there is a canonical isomorphism $j_{E,F} \co \cJ(E) \otimes_\cJ \cJ(F) \to \cJ(E \otimes F)$ for every pair of vector bundles $E$ and~$F$.

\begin{proof}
We will first construct a natural morphism $j_{E,F}$ and then verify in local coordinates that it is an isomorphism. Using the coproduct map $\kappa \co \cD \otimes_{\cinf} \cD \to \cD$ from the preceding proof, there is a composition of maps
\[
\begin{array}{ccccc}
\shom_{\cinf}(\cD,\cE) \otimes_{\cinf} \shom_{\cinf}(\cD,\cF) & \to & \shom_{\cinf}(\cD \otimes_{\cinf} \cD, \cE \otimes \cF) &\xto{\kappa^*} & \shom_{\cinf}(\cD, \cE \otimes \cF), \\
\shortparallel & & & & \shortparallel \\
\cJ(E) \otimes_{\cinf} \cJ(F) & & & & \cJ(E \otimes F)
\end{array}
\]
which provides~$j_{E,F}$.

When $E$ is the trivial bundle, this map equips $\cJ(F)$ with a $\cJ$-module structure. The natural filtration on $\cJ(F)$ is automatically compatible with the filtration on~$\cJ$. 

Indeed, sufficiently locally, $F$ becomes a trivial bundle of rank $r$ via a frame $\{f_1,\ldots,f_r\}$. A choice of coordinates trivializes $\cJ \cong \cinf(U) \otimes_\CC \CC[[x_1,\ldots,x_n]]$, as described above. Then locally 
\[
\cJ(F)(U) \cong \cinf(U) \otimes_\CC \CC[[x_1,\ldots,x_n]] \otimes \bigoplus_{j=1}^r \CC f_j,
\]
and the action of $\cJ(U)$ on $\cJ(F)(U)$ is simply the natural multiplication by power series with coefficients in~$\cinf(U)$.

For arbitrary $E$ and $F$, we want to show that this map coequalizes the action of $\cJ$ on the $\cJ(E)$ and $\cJ(F)$. In other words, we want it to factor through the map $\cJ(E) \otimes_{\cinf} \cJ(F) \to\cJ(E)\otimes_\cJ~\cJ(F)$.

Now that we have an explicit map, we can straightforwardly check that it is an isomorphism. Fix a point $p \in M$ and choose coordinates $\{x_1,\ldots,x_n\}$ in a neighborhood $U$ of $p$. As noted above, there is then a natural trivialization of the jet bundle $\cJ$ on~$U$
\[
\cJ(U) \cong \cinf(U) \otimes_\CC \CC[[x_1,\ldots,x_n]]
\]
Fix a frame $\{e_i\}$ for $E$ on $U$ and a frame $\{f_j\}$ for $F$ on $U$. Then
\[
\cJ(E)(U) \cong \cinf(U) \otimes_\CC \CC[[x_1,\ldots,x_n]] \otimes \bigoplus_i \CC e_i
\]
and
\[
\cJ(F)(U) \cong \cinf(U) \otimes_\CC \CC[[x_1,\ldots,x_n]] \otimes \bigoplus_j \CC f_j.
\]
Both $\cJ(E)(U)$ and $\cJ(F)(U)$ are free modules over $\cinf(U)[[x_1,\ldots,x_n]]$.
Likewise,
\[
\cJ(E \otimes F)(U) \cong \cinf(U) \otimes_\CC \CC[[x_1,\ldots,x_n]] \otimes \bigoplus_{i,j} \CC e_i \otimes f_j
\]
The map $j_{E,F}(U)$ reduces to the map 
\[
(\phi \otimes e_i) \otimes (\psi \otimes f_j) \mapsto (\phi \psi) \otimes (e_i \otimes f_j),
\]
where $\phi, \psi \in \cinf(U)[[x_1,\ldots,x_n]]$. Hence, it coequalizes the action of~$\cJ$.
\end{proof}

\begin{remark}
\label{rmk: jet of diff op}
Note that a differential operator $P \co \cE \to \cF$,  maps to an operator $J(P) \co \cJ(E) \to \cJ(F)$  that is only $\cinf$-linear, not $\cJ$-linear. Therefore, there is an extension of the $\infty$-jet functor to a functor from the category of differential complexes but it lands in the category $\cD$-modules, not~$\Mod_\cJ^{fil}$.
\end{remark}

\subsection{Three important constructions}

\subsubsection{Splittings and Fedosov resolutions}\label{sect:splitting}

The following non-canonical descriptions of jet bundles play an important role in reinterpreting Lie algebroid constructions as Lie algebraic constructions over the de Rham complex.

There is a slick way of understanding the sheaves $\cJ$ and $\cJ^k$. Consider the diagonal embedding $\Delta \co X \to X \times X$ and pull back the sheaf $\cinf_{X \times X}$ along $\Delta$. There is a canonical quotient map $q \co \Delta^{-1} \cinf_{X \times X} \to \cinf_X$ given by restricting a function to the diagonal, and let $\cI_\Delta$ denote the kernel---the functions that vanish on the diagonal---which is a sheaf of ideals inside the sheaf $\Delta^{-1} \cinf_{X \times X}$ of algebras. Then, by our definition, $\cJ^k = \Delta^{-1} \cinf_{X \times X}/\cI_\Delta^{k+1}$ for every $k$, and~$\cJ=\lim\Delta^{-1}\cinf_{X \times X}/\cI_\Delta^{k+1}$.

\begin{lemma}
There exists a non-canonical isomorphism $\sigma \co \calsym(T^\vee) \to \cJ$ of filtered algebras over~$\cinf$. 
\end{lemma}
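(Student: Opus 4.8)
The plan is to obtain $\sigma$ as the multiplicative extension of a linear splitting, and to verify it is an isomorphism by passing to associated graded. First I would record the filtered picture. Using the description $\cJ = \lim_k \Delta^{-1}\cinf_{X\times X}/\cI_\Delta^{k+1}$ with $F^k\cJ = \ker(\cJ \to \cJ^{k-1})$, one has $F^k\cJ/F^{k+1}\cJ \cong \cI_\Delta^k/\cI_\Delta^{k+1}$, and the first graded piece is the conormal bundle of the diagonal, canonically identified with $T^\vee_X$ via $f \otimes 1 - 1 \otimes f \mapsto \dd f$. More generally, in any product chart $\cI_\Delta$ is generated by the regular sequence $\{x_i \otimes 1 - 1 \otimes x_i\}$, so the multiplication maps $\Sym^k_\cinf(\cI_\Delta/\cI_\Delta^2) \to \cI_\Delta^k/\cI_\Delta^{k+1}$ are isomorphisms; hence $\Gr_F\cJ \cong \Sym_\cinf(T^\vee_X)$ as graded $\cinf$-algebras, whose completion along the grading is again $\cJ$.

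Next I would make the one choice. Since $F^1\cJ \twoheadrightarrow F^1\cJ/F^2\cJ \cong T^\vee_X$ is a surjection of pro-vector bundles over a paracompact base, it admits an $\cinf_X$-linear section $s \co T^\vee_X \to F^1\cJ$ (e.g.\ one produced from a torsion-free connection on $X$, or built as an inverse limit of splittings of surjections of finite-rank bundles); this is the only ingredient that is not canonical. Because $\calsym(T^\vee_X)$ is the completed free commutative $\cinf_X$-algebra on $T^\vee_X$, the section $s$ extends uniquely to a map of complete commutative $\cinf_X$-algebras $\sigma \co \calsym(T^\vee_X) \to \cJ$, sending $v_1 \cdots v_k \mapsto s(v_1)\cdots s(v_k)$ in the commutative algebra $\cJ$ (completeness of $\cJ$ guarantees convergence). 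Since the $\cI_\Delta$-adic filtration on $\cJ$ is multiplicative, $\sigma$ carries $F^k\calsym = \Sym^{\geq k}$ into $F^k\cJ$, so $\sigma$ is a morphism of filtered $\cinf_X$-algebras.

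Finally I would check $\sigma$ is an isomorphism by computing $\Gr_F\sigma$: on the $k$-th graded piece it is $\Sym^k$ of the canonical identification $T^\vee_X \cong F^1\cJ/F^2\cJ$ followed by the multiplication map $\Sym^k(\cI_\Delta/\cI_\Delta^2) \to \cI_\Delta^k/\cI_\Delta^{k+1}$, which is an isomorphism by the first step; and a filtration-preserving map between complete, separated filtered modules that is an isomorphism on associated graded is an isomorphism. The step I expect to be the real work — everything else being formal — is the regular-embedding computation $\cI_\Delta^k/\cI_\Delta^{k+1} \cong \Sym^k(\cI_\Delta/\cI_\Delta^2)$, i.e.\ that the associated graded of $\cinf_{X\times X}$ along the diagonal ideal is the symmetric algebra on the conormal bundle. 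This is precisely where one uses that $X$ is a smooth manifold and $\Delta$ the diagonal, and it is verified in a product chart, where $\cI_\Delta$ is visibly generated by a regular sequence so its associated graded is a polynomial algebra over $\cinf$.
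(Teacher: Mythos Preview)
Your argument is correct and complete, but it takes a genuinely different route from the paper's proof. The paper proceeds geometrically: it fixes a connection on $T_X$, uses the associated exponential map to produce a diffeomorphism between a tubular neighborhood of the zero section in $TX$ and a tubular neighborhood of the diagonal in $X\times X$, and then pulls back functions. Because pullback along a diffeomorphism is already an algebra map, this yields the filtered algebra isomorphism $\cJ^k \cong \cS{\rm ym}^{\leq k}(T^\vee)$ in one stroke, with no separate verification on the associated graded needed.

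Your approach is the standard filtered-algebra maneuver: identify $\Gr_F\cJ$ as a symmetric algebra via the regular-sequence computation, choose a linear lift $T^\vee_X \to F^1\cJ$, extend multiplicatively by freeness and completeness, and then invoke the ``isomorphism on associated graded implies isomorphism for complete separated filtrations'' principle. This is more portable---it applies to any complete filtered algebra whose associated graded is free on its first piece, and it does not rely on the exponential map or any Riemannian geometry. The paper's argument, on the other hand, is more transparent about \emph{which} non-canonical choice is being made (a connection on $T_X$) and makes the algebra-map property automatic rather than something to be checked. Note also that your parenthetical ``produced from a torsion-free connection'' is essentially the infinitesimal shadow of the paper's exponential map, so the two constructions agree to first order when one makes that particular choice of splitting.
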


\begin{proof}
Fix a connection $\nabla$ on the tangent bundle $T_X \to X$. The associated exponential map $\exp_{\nabla}$ produces a diffeomorphism between a tubular neighborhood of the zero section $X \xto{\op{zero}} TX$ and a tubular neighborhood of the diagonal $X \xto{\Delta} X \times X$. Thus, on that tubular  neighborhood, we obtain an isomorphism of short exact sequences of sheaves
\[
\xymatrix{
\cI_\Delta \ar@{^{(}->}[r] \ar[d]^{\exp_\nabla^{-1}} & \Delta^{-1} \cinf_{X \times X} \ar[r] \ar[d]^{\exp_\nabla^{-1}} & \cinf_X \ar[d]^{\id}\\
\cI_{\op{zero}} \ar@{^{(}->}[r] & \op{zero}^{-1} \cinf_{TX} \ar[r] & \cinf_X
}
\]
where $\cI_{\op{zero}}$ denotes the ideal sheaf encoding functions on $T_X$ vanishing on the zero section. We thus obtain isomorphisms 
\[
\Delta^{-1} \cinf_{X \times X}/ \cI_{\Delta}^{k+1} = \cJ^k \xto{\cong} \cS{\rm ym}^{\leq k}(T^\vee) = \op{zero}^{-1} \cinf_{T_X}/\cI_{\op{zero}}^{k+1}
\]  
for all $k$. Taking the limit, we obtain the claim.
\end{proof}

There is a natural way to extend this type of construction to vector bundles over~$X$.

\begin{lemma}
Fix a connection $\nabla_{T_X}$ on the tangent bundle $T_X \to X$. A connection $\nabla_{E}$ on a vector bundle $E \to X$ then induces a splitting $\sigma_\nabla~\co~E~\to~J(E)$.
\end{lemma}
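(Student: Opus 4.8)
The plan is to imitate the proof of the preceding lemma, now carrying the bundle $E$ along the geometry near the diagonal; in fact the construction will produce the compatible isomorphism $\sigma_E\co\csym(T^\vee)\otimes E\xrightarrow{\cong}\cJ(E)$ invoked before Lemma~\ref{lem:algebra}, of which $\sigma_\nabla$ is the restriction to jet order zero. Recall that the fiber $\cJ(E)_x$ is the space of $\infty$-jets at $x$ of smooth sections of $E$, equivalently the $\fm_x$-adic completion of the stalk $\cE_x$. First I would fix $\nabla_{T_X}$ and use its exponential map $\exp_{\nabla_{T_X}}$ to identify a tubular neighborhood of the zero section in $TX$ with a tubular neighborhood of the diagonal $\Delta\hookrightarrow X\times X$, exactly as in that proof; this alone furnishes the filtered algebra isomorphism $\sigma_0\co\csym(T^\vee)\xrightarrow{\cong}\cJ$. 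Then I would use $\nabla_E$ to parallel transport sections of $E$ along the radial $\nabla_{T_X}$-geodesics emanating from each point, which trivializes the relevant pullback of $E$ over that tubular neighborhood of the zero section.

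Combining the two identifications yields, for each $x$, an isomorphism $\cJ(E)_x\cong\csym(T^\vee_x)\otimes E_x$; these assemble into a filtration-preserving isomorphism of pro-vector bundles $\csym(T^\vee)\otimes E\xrightarrow{\cong}\cJ(E)$ that is linear over $\cJ\cong\csym(T^\vee)$, i.e.\ the desired $\sigma_E$. One then sets $\sigma_\nabla$ to be the composite
\[
\cE \;=\; \Sym^0(T^\vee)\otimes\cE \;\hookrightarrow\; \csym(T^\vee)\otimes\cE \;\xrightarrow{\ \cong\ }\; \cJ(E).
\]
Concretely, $\sigma_\nabla(e)$ is, at each point $x$, the $\infty$-jet of the section obtained by radially parallel transporting the single vector $e(x)$; in the $\exp_{\nabla_{T_X}}$-plus-parallel-transport coordinates it is simply the constant $E_x$-valued germ $e(x)$. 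In particular $\sigma_\nabla$ is $\cinf$-linear (unlike $j_\infty$), and since the surjection $\cJ(E)\to\cJ^0(E)=\cE$ corresponds under the isomorphism to the projection $\csym(T^\vee)\otimes E\to\Sym^0(T^\vee)\otimes E=E$, the composite $\cE\xrightarrow{\sigma_\nabla}\cJ(E)\to\cE$ is the identity; hence $\sigma_\nabla$ is a splitting, as desired.

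A more hands-on alternative is to build $\sigma_\nabla$ as an inverse limit of maps $\sigma_\nabla^k\co\cE\to\cJ^k(E)$: for $k=1$ the connection $\nabla_E$ is precisely the data of a splitting of $0\to T^\vee\otimes E\to J^1(E)\to E\to 0$ (namely $e\mapsto j_1(e)-\nabla_E e$, with $\nabla_E e\in\Gamma(T^\vee\otimes E)\subset\cJ^1(E)$), and for the inductive step one uses the connection on $J^k(E)$ induced by $\nabla_{T_X}$ and $\nabla_E$ to lift $\sigma_\nabla^k$ to $\sigma_\nabla^{k+1}$ through the inclusion $J^{k+1}(E)\hookrightarrow J^1(J^k(E))$. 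The step requiring genuine care is the one invisible in the statement: in the geometric approach, that the exponential map and parallel transport depend smoothly on the base point, so the fiberwise identification is an honest morphism of pro-vector bundles; in the inductive approach, that $\nabla_{T_X}$ controls the higher symbols so that the lift lands in $J^{k+1}(E)$ and not merely in iterated first jets. Given the explicit description via $\exp_{\nabla_{T_X}}$ and radial parallel transport, this is routine.
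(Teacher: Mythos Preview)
Your proposal is correct and follows essentially the same approach as the paper: use the exponential map of $\nabla_{T_X}$ to parametrize a neighborhood of each point, radially parallel transport a fiber vector via $\nabla_E$ to a local section, and take its $\infty$-jet at that point. You are more explicit (and even package the construction into the full isomorphism $\sigma_E$, which the paper defers to the subsequent corollaries), but the underlying construction is identical.
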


\begin{proof}
For a point $x \in X$, use the exponential map for the connection $\nabla_{T_X}$ to parametrize a small neighborhood of $x$ in $X$. For each point $e$ in the fiber of $E$ over $x$, use the connection $\nabla_E$ to extend to a smooth section $s_e$ of $E$ over that small neighborhood of $x$. Then $j_\infty(s_e)$ is a section of $J(E)$ on that small neighborhood. Set $\sigma_\nabla(e)$ to be the value of $j_\infty(s_e)$ at $x$. This construction in fact produces a vector bundle map.
\end{proof}

\begin{cor}
For every vector bundle $E$, there is a non-canonical isomorphism $m_\sigma \co \cJ \otimes_{\cinf} \cE \to \cJ(E)$ as $\cJ$-modules.
\end{cor}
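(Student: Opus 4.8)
The plan is to extend the $\cinf$-linear splitting $\sigma_\nabla \co \cE \to \cJ(E)$ induced by the chosen connections $\nabla_{T_X}$ and $\nabla_E$ to a $\cJ$-linear map, and then to check that the result is an isomorphism by a local power-series computation.

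First I would promote $\sigma_\nabla$ to a morphism of $\cJ$-modules. By Proposition \ref{prop:jetsm} the sheaf $\cJ(E)$ carries a canonical $\cJ$-module structure whose restriction along the structure map $\cinf \hookrightarrow \cJ$ (which picks out the constant jets) is the usual $\cinf$-module structure, and $\sigma_\nabla$ is a map of $\cinf$-modules for this structure. Since $\cJ \otimes_\cinf (-)$ is the left adjoint to restriction of scalars along $\cinf \hookrightarrow \cJ$, the map $\sigma_\nabla$ corresponds to a unique $\cJ$-linear map
\[
m_\sigma \co \cJ \otimes_\cinf \cE \to \cJ(E), \qquad \phi \otimes s \longmapsto \phi \cdot \sigma_\nabla(s),
\]
which recovers $\sigma_\nabla$ upon precomposition with $s \mapsto 1 \otimes s$. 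It is automatically filtration-preserving, since $\sigma_\nabla$ lands in $F^0 \cJ(E)$ and the $\cJ$-action respects the filtrations; the dependence on the auxiliary connections accounts for the non-canonicity, and the subscript $\sigma$ abbreviates this choice.

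Next I would show $m_\sigma$ is an isomorphism, which is a local claim on $X$. Fix coordinates $\{x_1,\dots,x_n\}$ centered at a point $p$ and a frame $\{e_i\}$ for $E$ on a neighborhood $U$. As in the proof of Proposition \ref{prop:jetsm} one has $\cJ(U) \cong \cinf(U)[[x_1,\dots,x_n]]$, and both $(\cJ \otimes_\cinf \cE)(U)$ and $\cJ(E)(U)$ are free of rank $\op{rk}(E)$ over this ring, with $m_\sigma$ acting $\cinf(U)[[x_1,\dots,x_n]]$-linearly through the matrix $A = (a_{ij})$ determined by $\sigma_\nabla(e_i) = \sum_j a_{ij} e_j$. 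Because $\sigma_\nabla$ splits the projection $\cJ(E) \to \cJ^0(E) = \cE$, the $x$-degree-zero part of $a_{ij}$ is $\delta_{ij}$, so $A = I + N$ with the entries of $N$ lying in the ideal $(x_1,\dots,x_n)$; such an $A$ is invertible over the $(x)$-adically complete ring $\cinf(U)[[x_1,\dots,x_n]]$, with inverse the $(x)$-adically convergent series $\sum_{k \geq 0}(-N)^k$. Hence $m_\sigma$ is an isomorphism over $U$, and therefore globally. Equivalently, one may observe that $m_\sigma$ induces the identity on associated graded sheaves, $\Gr(\cJ \otimes_\cinf \cE) \cong \Sym(\cT^\vee) \otimes \cE \cong \Gr \cJ(E)$, and conclude since the filtrations are complete and exhaustive.

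I do not expect any serious obstacle: this is a routine application of extension along a free module, and the only points needing mild care are the bookkeeping of which $\cinf$-module structure on $\cJ$ is in force (so that $\cJ \otimes_\cinf \cE$ and $\sigma_\nabla$ are compatible) and the observation that the local transition matrix $A$ is unipotent modulo the filtration, which is exactly what makes its inverse converge.
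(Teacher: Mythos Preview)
Your proposal is correct and follows essentially the same approach as the paper: fix a splitting $\sigma$ of $\cJ(E)\to\cE$, define $m_\sigma(\phi\otimes e)=\phi\cdot\sigma(e)$ via the $\cJ$-module structure, and verify locally that this is an isomorphism. The paper simply asserts the local check is straightforward (deferring to another reference), whereas you spell out the unipotent-matrix/associated-graded argument explicitly.
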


\begin{proof}
Fix a splitting $\sigma$ of the canonical quotient $\cJ(E) \to \cE$. Then we obtain a natural map $m_\sigma \co \cJ \otimes_{\cinf} \cE \to \cJ(E)$ sending $\phi \otimes e$ to $\phi \cdot \sigma(e)$, using the $\cJ$-module structure of $\cJ(E)$. It is then straightforward to check locally that $m_\sigma$ is an isomorphism. (See, e.g., the proof of Lemma E.2 of~\cite{GGCS}.)
\end{proof}

\begin{cor}
Fix an algebra isomorphism $\sigma_0 \co \calsym(T^\vee_X) \to \cJ$. Let $E \to X$ be a vector bundle. There exists a non-canonical isomorphism $\sigma_1 \co \calsym(T^\vee_X) \ot \cE \to \cJ(E)$ intertwining the module structures.
\end{cor}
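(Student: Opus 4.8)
The plan is to combine the fixed algebra isomorphism $\sigma_0$ with the module-level splitting supplied by the preceding corollary. First I would apply the lemma above (fixing a connection $\nabla_{T_X}$ on the tangent bundle) together with a choice of connection $\nabla_E$ on $E$ to produce a splitting $\sigma \co \cE \to \cJ(E)$ of the canonical quotient $\cJ(E) \to \cE$. By the preceding corollary this yields a non-canonical isomorphism of $\cJ$-modules
\[
m_\sigma \co \cJ \otimes_{\cinf} \cE \xto{\cong} \cJ(E), \qquad \phi \otimes e \longmapsto \phi \cdot \sigma(e),
\]
where the $\cJ$-action on $\cJ(E)$ is the canonical one from Proposition~\ref{prop:jetsm} and the domain carries the free module structure.

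Next I would regard both sides as modules over $\calsym(T^\vee_X)$ by restriction of scalars along the algebra isomorphism $\sigma_0 \co \calsym(T^\vee_X) \to \cJ$; since $m_\sigma$ is $\cJ$-linear, it is automatically $\calsym(T^\vee_X)$-linear for these induced structures. Tensoring $\sigma_0$ over $\cinf$ with $\id_{\cE}$ gives an isomorphism
\[
\sigma_0 \otimes_{\cinf} \id_{\cE} \co \calsym(T^\vee_X) \otimes_{\cinf} \cE \xto{\cong} \cJ \otimes_{\cinf} \cE
\]
of $\calsym(T^\vee_X)$-modules (the domain with its free structure). I would then set
\[
\sigma_1 := m_\sigma \circ (\sigma_0 \otimes_{\cinf} \id_{\cE}) \co \calsym(T^\vee_X) \otimes_{\cinf} \cE \xto{\cong} \cJ(E),
\]
which by construction satisfies $\sigma_1(a \cdot m) = \sigma_0(a) \cdot \sigma_1(m)$ for all $a \in \calsym(T^\vee_X)$; that is, $\sigma_1$ intertwines the module structures through $\sigma_0$. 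It is filtration-preserving since both $\sigma_0$ and $m_\sigma$ are.

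I do not expect a genuine obstacle here. The only thing requiring care is bookkeeping which algebra each module is taken over, using the ubiquitous $\cinf$-linearity of the maps in sight to pass freely between $\cJ$-linearity and $\calsym(T^\vee_X)$-linearity once $\sigma_0$ is fixed; and checking that the filtrations line up, which is automatic. If one additionally wants $\sigma_0$ and $\sigma_1$ to be \emph{compatible} in the sense used in Lemma~\ref{lem:algebra}---i.e. with the $\cJ$-module structure on $\cJ(E)$ matching the $\calsym(T^\vee_X)$-module structure on the free module under $\sigma_0$---that is precisely what this construction produces, since $m_\sigma$ restricts to the identity on the $\cE$-summand modulo the ideal.
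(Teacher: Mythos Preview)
Your argument is correct and is exactly the paper's approach: fix a splitting $\sigma$ of $\cJ(E)\to\cE$ to obtain the $\cJ$-module isomorphism $m_\sigma$, then set $\sigma_1 = m_\sigma \circ (\sigma_0 \otimes_{\cinf} \id_\cE)$. The paper's proof is this one-line composition, and you have simply unpacked the module-compatibility check that makes it work.
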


\begin{proof}
Fix a splitting $\sigma$ of the canonical quotient $\cJ(E) \to \cE$, as in the proof of the proceeding lemma. Then set $\sigma_1 = m_\sigma\circ\sigma_0\otimes\id_\cE$.
\end{proof}

\subsubsection{The Grothendieck connection}

As notation, we mention that for $V \to X$ a vector bundle with flat connection $\nabla$, we use $dR(V,\nabla)$ to denote the sheaf of cochain complexes
\[
\cV \xto{\nabla} \Omega^1(V) \xto{\nabla} \Omega^2(V) \xto{\nabla} \cdots
\]
called the {\it de Rham complex of~$(V,\nabla)$}.

Note that there is a composite functor $dR(J(-))$ from differential complexes on $X$ to $\Omega_X$-modules. Further, note that any $\Omega_X$-module inherits a filtration via the nilpotent ideal~$\Omega^{\ge 1}_X$.

\begin{prop}\label{prop:grconn}
Let $E \to X$ be a vector bundle. The bundle $J(E) \to X$ has a canonical flat connection $\nabla_E$, called the Grothendieck connection, such that 
\[
j_\infty \co \cE \to dR(J(E), \nabla_E)
\]
is a quasi-isomorphism of sheaves. In particular, the (hyper)cohomology of $dR(J(E),\nabla_E)$ vanishes except in degree~0.
\end{prop}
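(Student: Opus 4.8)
The plan is to write down the Grothendieck connection explicitly in coordinates, check that $j_\infty$ is horizontal, and then run the standard ``formal Poincaré lemma'' computation (in the spirit of Fedosov and Gelfand--Kazhdan). First I would construct $\nabla_E$. Over a chart $U$ with coordinates $\{x_1,\dots,x_n\}$, formal ``diagonal'' coordinates $t_i$ (thought of as $y_i-x_i$ on $U\times U$), and a local frame trivializing $E|_U\cong V\times U$, a section of $\cJ(E)$ over $U$ is a formal power series $\sum_\alpha f_\alpha(x)\,t^\alpha$ with $f_\alpha$ a $V$-valued smooth function, and I set
\[
\nabla_E \;=\; \sum_{i=1}^n dx_i \otimes \Bigl(\frac{\partial}{\partial x_i}-\frac{\partial}{\partial t_i}\Bigr),
\]
acting diagonally on $V$. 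This formula is the one underlying the canonical left $\cD_X$-module structure on jets, hence is independent of the coordinates and the frame; flatness $\nabla_E^2=0$ is immediate from $[\partial_{x_i}-\partial_{t_i},\,\partial_{x_j}-\partial_{t_j}]=0$.

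Next I would check that $\nabla_E\circ j_\infty=0$. For a section $s$ of $E$ over $U$ one has $j_\infty s=\sum_\alpha\tfrac{1}{\alpha!}(\partial^\alpha s)\,t^\alpha$; applying $\nabla_E$, the contribution of $\partial_{t_i}$ cancels that of $\partial_{x_i}$ after the reindexing $\beta=\alpha-e_i$, using $\partial_{t_i}t^\alpha=\alpha_it^{\alpha-e_i}$ and $\tfrac{\alpha_i}{\alpha!}=\tfrac{1}{(\alpha-e_i)!}$. So $j_\infty\colon\cE\to dR(J(E),\nabla_E)$ is a map of complexes of sheaves, with $\cE$ placed in degree $0$.

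It remains to show $j_\infty$ is a quasi-isomorphism, which may be checked locally, over such a chart $U$. In the chosen trivialization the complex $dR(J(E),\nabla_E)|_U$ is just $V\otimes_{\RR} dR(J,\nabla)|_U$ with $\nabla$ the Grothendieck connection on jets of functions, so it suffices to treat the trivial line bundle, i.e.\ the complex $\bigl(\Omega^\bullet(U)\,\widehat{\otimes}_{\cinf(U)}\cJ(U),\ \nabla\bigr)$. For $H^0$: $\nabla f=0$ says $\partial_{t_i}f=\partial_{x_i}f$ for all $i$, equivalently $(\beta_i+1)f_{\beta+e_i}=\partial_if_\beta$; this recursion is consistent since partial derivatives commute, and it forces $f=j_\infty(f|_{t=0})$, so $H^0=j_\infty(\cinf(U))$ and $j_\infty$ is injective. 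For the higher cohomology I would write $\nabla=\delta+d$, where $d=\sum_idx_i\,\partial_{x_i}$ differentiates the power-series coefficients and $\delta=-\sum_idx_i\,\partial_{t_i}$; assigning weight $1$ to each $t_i$ and to each $dx_i$ makes $\delta$ weight-preserving and $d$ weight-raising, so the weight filtration (complete and exhaustive) yields a convergent spectral sequence whose first differential is $\delta$. Its cohomology is $\cinf(U)$ concentrated in weight $0$, by the formal Poincaré lemma: the Fedosov-type homotopy $\delta^{-1}$, which on a monomial containing some $dx_i$ turns one such $dx_i$ back into $t_i$ with the appropriate scalar, satisfies $\delta\delta^{-1}+\delta^{-1}\delta=\id-\pi_0$ with $\pi_0$ the projection to (form degree $0$, $t$-degree $0$), and no convergence issue arises since $\delta^{-1}$ preserves weight. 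Thus $E_1=\cinf(U)$ in total degree $0$, the spectral sequence degenerates, and the complex is a resolution of $\cinf(U)$, so $j_\infty$ is a quasi-isomorphism. Finally, since $dR(J(E),\nabla_E)$ is thereby quasi-isomorphic to $\cE$ in degree $0$ and $\cE$ is a fine sheaf, its hypercohomology is $\Gamma(X,\cE)$ in degree $0$ and vanishes in all other degrees.

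The main obstacle is this last vanishing---the formal Poincaré lemma---where the essential point is to handle power series in the $t_i$ rather than polynomials; the weight spectral sequence (equivalently, the explicit homotopy $\delta^{-1}$) is exactly what converts the potentially divergent task of inverting $\nabla$ into the harmless task of inverting the weight-preserving operator $\delta$. A secondary subtlety is the coordinate-independence of the local formula for $\nabla_E$, which is cleanest to see by identifying $\nabla_E$ with the tautological $\cD_X$-module structure on $\cJ(E)$.
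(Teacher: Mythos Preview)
Your proposal is correct and follows essentially the same approach as the paper: reduce to a local coordinate chart, trivialize $E$, and invoke the formal Poincar\'e lemma via an explicit Fedosov-type contracting homotopy. The paper's proof is terse and simply cites Appendix B of \cite{CFT} for that homotopy, whereas you spell out the operator $\delta^{-1}$ and package the argument with the weight spectral sequence; your observation that $\cE$ is fine is a mild sharpening of the paper's use of softness.
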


\begin{proof}
Since $\cE$ is soft, it has vanishing higher cohomology, so the second claim follows from the first. It now suffices to demonstrate this quasi-isomorphism locally. By picking a frame on $E$ in some small coordinatized neighborhood on which $E$ trivializes, one is left with verifying the claim for the trivial rank $r$ bundle. See Appendix B of \cite{CFT} for an explicit contracting homotopy. 
\end{proof}

\subsubsection{Jets and pullbacks}

We will need to understand how the jet construction intertwines with maps of manifolds. It plays a crucial role Proposition \ref{fullfunctoriality}, which explains maps betwen Lie algebroids living over different manifolds provide maps of the associated $\L8$ spaces. The following proposition is undoubtedly known by experts, but we could not find a convenient reference, so we provide a proof.

\begin{prop}\label{prop:pulljet}
Let $f \co X \to Y$ be a map of smooth manifolds and  $E \to Y$ a vector bundle.  Then there is a natural map of complete filtered (i.e., pro-) vector bundles on~$X$:
\[
f^{-1}(J_Y E) \to J_X(f^{-1} E).
\]
\end{prop}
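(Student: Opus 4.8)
The plan is to construct the map locally (or, better, using a universal description) and then check it is independent of choices so that it glues. First I would recall the ``kernel'' or ``diagonal'' description of jets from the appendix: for a manifold $Y$, the sheaf $\cJ_Y$ of $\infty$-jets of functions is $\lim_k \Delta_Y^{-1}\cinf_{Y\times Y}/\cI_{\Delta_Y}^{k+1}$, and more generally $\cJ_Y(E)$ is obtained by tensoring the second-factor restriction of (the pullback along $\Delta_Y$ of) $\cE$ against these truncated neighborhoods of the diagonal; concretely, $J^k_Y(E)$ has fiber $\cE_y/\fm_y^{k+1}\cE_y$. The key observation is that a smooth map $f\co X\to Y$ induces $f\times f\co X\times X\to Y\times Y$, which carries the diagonal $\Delta_X$ to $\Delta_Y$ and hence carries $\cI_{\Delta_X}$ into the pullback of $\cI_{\Delta_Y}$ (a vanishing function pulls back to a vanishing function). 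Thus $(f\times f)$ sends the $(k+1)$-st power of the diagonal ideal on $Y\times Y$ into that on $X\times X$, yielding compatible maps $f^{-1}(J^k_Y E)\to J^k_X(f^{-1}E)$ for each $k$ and, in the limit, the desired map $f^{-1}(J_Y E)\to J_X(f^{-1}E)$ of pro-vector bundles. This is manifestly filtration-preserving.

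Concretely, the map is just ``pull back a Taylor expansion'': given a germ of a section of $E$ near $f(x)\in Y$, we pull it back along $f$ to a germ of a section of $f^{-1}E$ near $x$, and this operation descends to jets because $f$ preserves the order of vanishing. To make this precise without appealing to a specific formula, I would either (i) use the representable/universal property of $J^k$ as the bundle corepresenting ``$k$-jets at a point'' together with functoriality of germs under $f$, or (ii) describe the map on local frames: choosing coordinates $\{y_1,\dots,y_m\}$ on $Y$ near $f(x)$ and $\{x_1,\dots,x_n\}$ on $X$ near $x$, the trivialization $\cJ_Y(E)\cong \cinf_Y\otimes\CC[[y_1,\dots,y_m]]\otimes(\text{fiber})$ and the map sends a formal power series in the $y_i$ to its composition with the Taylor expansions $y_i\mapsto (f^*y_i)$, a power series in the $x_j$ with $\cinf_X$ coefficients; this substitution is well-defined on formal power series precisely because each $f^*y_i$ has no constant term relative to the base point (it vanishes at $x$). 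One then checks independence of the coordinate choices, which reduces to the chain rule / naturality of Taylor expansion under composition.

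The main obstacle, such as it is, is bookkeeping rather than conceptual: one must be careful that this is genuinely a map of sheaves \emph{on $X$} (hence the appearance of $f^{-1}$ on the left and not $f^*$), that it respects the pro-structure and is filtration-preserving, and that it is compatible with the monoidal structure of Proposition \ref{prop:jetsm} and with differential operators (so that it intertwines $J_Y(d_L)$-type differentials with $J_X(d_{f^{-1}L})$-type ones, as needed in Proposition \ref{fullfunctoriality}). For the present statement, though, the essential content is simply that $f$ preserves vanishing-to-order-$k$, so the diagonal-ideal description makes the construction and its naturality nearly automatic; the only thing to verify carefully is that the local-coordinate description agrees with the invariant one and hence glues.
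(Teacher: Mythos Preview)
Your proposal is correct and follows essentially the same route as the paper: both use the diagonal-ideal description of jets, observe that $f\times f$ carries the diagonal to the diagonal so that the pullback of $\cI_{\Delta_Y}^{k+1}$ lands in $\cI_{\Delta_X}^{k+1}$, and then pass to quotients and the limit. (One small slip: you wrote ``carries $\cI_{\Delta_X}$ into the pullback of $\cI_{\Delta_Y}$,'' but your parenthetical and the next sentence make clear you mean the correct direction, namely that the pullback of $\cI_{\Delta_Y}$ lands in $\cI_{\Delta_X}$.)
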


\begin{proof}
The key is to use the geometric perspective on the jet construction as described at the beginning of Section \ref{sect:splitting} above.

Let $f^2: X^2 \to Y^2$ denote the map $(x,x') \mapsto (f(x),f(x'))$. We thus have a commuting diagram 
\[
\xymatrix{
X \ar[r]^{f} \ar[d]^{\Delta_X} & Y \ar[d]^{\Delta_Y} \\
X^2 \ar[r]^{f^2} & Y^2
}
\]
of manifolds. Hence, the short exact sequence of $\cinf_{Y}$-module sheaves
\[
0 \to \cI_{\Delta_Y} \to \Delta^{-1}_Y \cinf_{Y^2} \to \cinf_Y \to 0
\]
pulls back to a map of right exact sequences of sheaves of vector spaces
\[
\xymatrix{
f^{-1} \cI_{\Delta_Y} \ar[r] \ar[d] & f^{-1}\Delta^{-1}_Y \cinf_{Y^2} \ar[r] \ar[d] &  f^{-1}\cinf_Y \ar[r] \ar[d] & 0\\
\cI_{\Delta_X} \ar[r]& \Delta^{-1}_X \cinf_{X^2} \ar[r] & \cinf_X \ar[r] & 0
}
\]
by the functoriality of pull back~$f^{-1}$.

The middle vertical arrow is a map of sheaves of commutative algebras and the left vertical arrow is a map of sheaves of ideals. Thus we can look at quotients by powers of the ideal sheaves, and we obtain a canonical map
\[
f^{-1} \cJ^k_Y \to \cJ^k_X
\]
for every $k$. These maps induce a map of the associated pro-vector bundles. Concretely, this map describes how the Taylor expansion of a function $\phi$ around a point $f(x)$ in $Y$ relates to the Taylor expansion of $f^{-1} \phi = \phi \circ f$ around~$x$.

We can do something similar with a vector bundle $E \to Y$. Let $\pi_1^{-1}E \to Y^2$ denote the vector bundle pulled back along the projection $\pi_1 \co Y^2 \to Y$ to the first copy of $Y$. Let $\cE$ denote the sheaf of smooth sections of $E$ on $Y$, and let $\cE^{(2)}$ denote the sheaf of smooth functions of $\pi_1^E$ on $Y^2$. The short exact sequence of $\cinf_{Y}$-module sheaves
\[
0 \to \cI_{\Delta_Y} \otimes_{\cinf_Y} \Delta^{-1}_Y \cE^{(2)} \to \Delta^{-1}_Y \cE^{(2)} \to \cE \to 0
\]
pulls back to a map of right exact sequences of sheaves of vector spaces
\[
\xymatrix{
f^{-1} (\cI_{\Delta_Y} \otimes_{\cinf_Y} \Delta^{-1}_Y) \cE^{(2)} \ar[r] \ar[d] & f^{-1}\Delta^{-1}_Y  \cE^{(2)} \ar[r] \ar[d] &  f^{-1}\cE \ar[r] \ar[d] & 0\\
\cI_{\Delta_X} \otimes_{\cinf_X} \cE_f^{(2)} \ar[r]& \Delta^{-1}_X \cE_f^{(2)} \ar[r] & \cE_f \ar[r] & 0
}\]
where $\cE_f$ denotes the sheaf of smooth sections of $f^{-1} E$ on $X$ and $\cE_f^{(2)}$ denotes the sheaf of smooth sections of $\pi_1^{-1} f^{-1} E$ on $X^2$. Thus by looking at quotients by powers of the ideal sheaves, we obtain a canonical map
\[
f^{-1} \cJ^k_Y(E) \to \cJ^k_X(f^{-1}E).
\]
In the limit, we obtain a natural map $f^{-1} \cJ_Y(E) \to \cJ_X(f^{-1}E)$.
\end{proof}

\bibliographystyle{alpha}
\bibliography{algebroids_v12}

\end{document}